\pgfplotsset{compat=1.11}
\newcommand{\Rset}{\ensuremath{\mathds R}}
\newcommand{\Rd}{\ensuremath{{\mathds R^d}}}
\newcommand{\Td}{\ensuremath{{\mathds T^d}}}
\newcommand{\Nset}{\ensuremath{\mathds N}}
\newcommand{\Zset}{\ensuremath{\mathds Z}}
\newcommand{\dd}{\ensuremath{\mathrm d}}
\newcommand{\ee}{\ensuremath{\mathrm e}}
\newcommand{\ii}{\ensuremath{\mathrm i}}
\newcommand{\ftrue}{\ensuremath{{f^\dagger}}}
\newcommand{\gtrue}{\ensuremath{{g^\dagger}}}
\newcommand{\fsub}{\ensuremath{{f^*}}}
\newcommand{\fad}{\ensuremath{{\widehat{f}_\alpha}}}
\newcommand{\gobs}{\ensuremath{{g^{\mathrm{obs}}}}}
\newcommand{\Xspace}{\ensuremath{\mathcal X}}
\newcommand{\Yspace}{\ensuremath{\mathcal Y}}
\newcommand{\Rspace}{\ensuremath{\mathcal X}}
\newcommand{\Zspace}{\ensuremath{\mathcal Z}}
\newcommand{\Dset}{\ensuremath{\mathcal D}}
\newcommand{\Rpen}{\ensuremath{\mathcal R}}
\newcommand{\Cerr}{\ensuremath{C_\mathrm{err}}}
\newcommand{\BH}{\ensuremath{\mathrm{BH}}}
\newcommand{\js}{j}
\newcommand{\rand}{\ensuremath{Z}}
\DeclarePairedDelimiterX\altnorm[2]{\lVert}{\rVert}{#1\,\delimsize\vert\,\mathopen{}#2}
\DeclarePairedDelimiter\absval{\lvert}{\rvert}
\DeclarePairedDelimiter\norm{\lVert}{\rVert}
\DeclarePairedDelimiterX\pairing[2]{\langle}{\rangle}{#1,#2}
\DeclarePairedDelimiter\rbra{(}{)}
\DeclarePairedDelimiter\sbra{[}{]}
\DeclarePairedDelimiter\cbra{\{}{\}}
\DeclareMathOperator*{\argmin}{arg\,min}
\DeclareMathOperator{\ran}{ran}
\DeclareMathOperator{\supp}{supp}
\DeclareMathOperator{\err}{\mathbf{err}}
\DeclareMathOperator{\E}{\mathbb{E}}
\DeclareMathOperator{\prob}{\mathbb{P}}
\newcommand{\Besovpq}[1][s]{\ensuremath{B_{p,q}^{#1}}}
\newcommand{\Besovpqdual}[1][u]{\ensuremath{B_{p^\prime,q^\prime}^{#1}}}
\title{{Optimal Convergence Rates for Tikhonov Regularization in Besov Spaces}\thanks{
\funding{This work was supported by Deutsche Forschungsgemeinschaft DFG through Project C09
of SFB 755 and Project B01 of RTG 2088.}}}
\author{Frederic Weidling\thanks{Institut f\"ur Numerische und Angewandte Mathematik, Universit\"at G\"ottingen, Lotze\-stra\ss{}e 16-18, 37083 G\"ottingen, Germany    (\email{f.weidling@math.uni-goettingen.de}, \email{b.sprung@math.uni-goettingen.de}, \email{hohage@math.uni-goettingen.de})}
\and
Benjamin Sprung\footnotemark[2]
\and
Thorsten Hohage\footnotemark[2]
}
\begin{document}
\maketitle

\begin{abstract}
This paper deals with Tikhonov regularization for linear and nonlinear ill-posed 
operator equations with wavelet Besov norm penalties.  We show order optimal 
rates of convergence for finitely smoothing operators and for the backwards 
heat equation for a range of Besov spaces using variational source conditions. 
We also derive order optimal rates for a white noise model 
with the help of variational source conditions and concentration inequalities 
for sharp negative Besov norms of the noise. 
\end{abstract}

\begin{keywords}
inverse problems, white noise, regularization, variational source conditions
\end{keywords}

\begin{AMS}
65J20, 65J22, 65N21
\end{AMS}

\section{Introduction}
We consider ill-posed operator equations 
\begin{equation*}
	F(\ftrue)= \gtrue
\end{equation*}
with noisy right hand side and a forward operator $F\colon \Dset\subset \Xspace\to \Yspace$ 
where $\Xspace$ is some Besov space $B^s_{p,q}$, $\Dset\subset\Xspace$ a non-empty, closed, convex 
set and $\Yspace$ an $L^2$ space. $\ftrue\in \Dset$ denotes the true solution. 
We treat two noise models, the standard deterministic error model where
the observed data $\gobs$ satisfy
\begin{equation}\label{eq:det_noise}
	\gobs= \gtrue+\xi, \qquad \altnorm*{\xi}{\Yspace}\leq\delta 
\end{equation}
with a deterministic noise level $\delta>0$, and statistical models 
\begin{equation}\label{eq:rand_noise}
	\gobs= \gtrue+\varepsilon \rand
\end{equation}
with a statistical noise level $\varepsilon>0$ and some noise process $\rand$
on $\Yspace$ with white noise as most prominent example. 
(As we will have to deal with norms involving many, sometimes nested indices, we use 
the notation $\altnorm{\cdot}{\Yspace}$ instead of $\norm{\cdot}_{\Yspace}$ throughout the paper.) 

One of the most common approaches to compute a stable approximation of $\ftrue$ 
given $\gobs$ is Tikhonov regularization of the form
\begin{equation}\label{eq:tikhonov_det}
	\fad \in \argmin_{f\in \Dset} \left[\tfrac{1}{t}\altnorm*{F(f)-\gobs}{\Yspace}^t + \alpha\Rpen(f)\right]
\end{equation}
with some $t\geq 1$ and a regularization parameter $\alpha>0$.
In this paper we study the case that $\Rpen(f)$ is given by 
a norm power of a Besov norm of $f$.  
Such penalties with wavelet 
Besov norms with small index $p$ are frequently used to enforce sparsity 
(see, e.g.~\cite{Daubechies2004,RR:10}). As white noise on a Hilbert space $\Yspace$ does not 
belong to $\Yspace$ with probability $1$, 
we use $t=2$ in this case, expand the square, and omit the term $\frac{1}{2}\altnorm{\gobs}{\Yspace}^2$, 
which has no influence on the minimizer. This yields 
\begin{equation}\label{eq:tikhonov}
	\fad \in \argmin_{f\in \Dset} \left[\tfrac{1}{2}\altnorm*{F(f)}{\Yspace}^2-\pairing*{\gobs}{F(f)} + \alpha\Rpen(f)
	\right].
\end{equation}
A main goal of regularization theory are bounds on the distance of regularized estimators 
of the true solution in terms of the noise level 
$\delta$ or $\varepsilon$, respectively. In the case that $\Xspace$ and $\Yspace$ are Hilbert spaces 
such error bounds can be obtained by spectral theory (see e.g.\ \cite{EHN:96} for the deterministic case 
and \cite{BHMR:07} for the stochastic case). Concerning convergence results for deterministic regularization 
with Besov norms we refer to \cite{Daubechies2004} for convergence without rates, to  \cite{LT:08} for 
situations in which the rate $O(\sqrt{\delta})$ based on the source condition 
$\partial\Rpen(\ftrue)\cap \ran(T^*)\neq \emptyset$ 
can be achieved (possibly with respect to negative Sobolev norms) and 
to \cite{RR:10} for situations in which the rate $O(\delta^{2/3})$ 
under the source condition $\partial\Rpen(\ftrue)\cap \ran(T^*T)\neq \emptyset$ 
occurs. 
For statistical inverse problems minimax optimal rates under Besov smoothness assumptions have 
been shown for methods based on wavelet shrinkage 
(see \cite{CHR:04,Donoho1995,KPPW:07,KMR:06}).
Variational regularization has the advantage that no assumptions on the operator are required, it even works for nonlinear operators. 

In the last decade it has become popular to formulate source conditions in the form 
of variational inequalities and to derive convergence rates for Tikhonov regularization 
under such variational source conditions 
\cite{Flemming:12,grasmair:10,HKPS:07,Scherzer_etal:09,SKHK:12}. 
Recently, the authors have developed a method for the verification of 
variational source conditions in Hilbert spaces under standard 
smoothness conditions. This method has been successfully applied to a number 
of interesting inverse problems, both linear and nonlinear 
\cite{HW:15,HW:17,SH:18,WH:17}.

In this paper we extend our technique for the verification of variational 
source conditions to a Banach space setting. In particular, this allows derive 
variational source conditions for a certain class of operators if the true 
solution belongs to some Besov space $B^s_{p,q}$. This leads to optimal convergence rates 
for $p\in (1,2]$ and $q\geq 2$. 
An important step is a new characterization of subgradient smoothness 
(see Theorem \ref{thm:subdiffSmoothness}). As a second main novelty, 
we introduce a new technique to treat white noise and other stochastic noise models 
in non-quadratic variational regularization in an optimal way 
(Theorem \ref{thm:whiteNoiseRate}). We obtain not only order optimal convergence rates 
of the expected error, 
but also an estimate of the distribution of the error in terms of the 
distribution of a negative Besov norm of the noise process. At least in the case of 
white noise concentration inequalities for such negative Besov norms are known. 

The remainder of this paper is organized as follows: In \S \ref{sec:rates} we recall the definition of variational source conditions and the correspoding deterministic rates 
rates. Moreover, we formulate our new technique to derive error bounds for statistical 
noise model in an abstract functional analytic setting. 
Then we introduce a strategy for the verification of variational source condition including 
a characterization of subgradient smoothness in \S \ref{sec:strategy}. In the following two sections 
\ref{sec:finite_smoothing} and \ref{sec:BH} 
we present our results on convergence rates for finitely smoothing operators 
and for the backwards heat equation, respectively. Moreover, we show some numerical 
results for finitely smoothing operators confirming our theoretical 
error bounds. The paper has two appendices, one collecting 
properties of Besov spaces used in this paper, and the other one giving 
details on our numerical experiments.


\section{Variational source conditions}\label{sec:rates}

\subsection{Basic definitions}
Variational source conditions and the general error bounds in this section 
will be formulated in terms of 
the \emph{Bregman distance} $\Delta_{\Rpen}(f,\ftrue)$ between 
$f,\ftrue\in \Xspace$ with respect to some convex functional 
$\Rpen\colon \Xspace\to (-\infty,\infty]$ defined by  
\begin{equation*}
	\Delta_{\Rpen}(f,\ftrue):=\Rpen(f)-\Rpen(\ftrue)-\pairing{\fsub}{f-\ftrue},
\end{equation*}
where for the rest of this paper
\begin{equation*}
	\fsub\in \partial \Rpen(\ftrue).
\end{equation*}
The use of Bregman distances in inverse problems was first proposed in 
\cite{BO:04,eggermont:93}. Here $\partial \Rpen(\ftrue)$ denotes the 
subdifferential of $\Rpen$ at $\ftrue$ (see \cite{ET:76}). 
In general $\Delta_{\Rpen}(f,\ftrue)$ depends on the choice of the
subgradient $\fsub\in \partial \Rpen(\ftrue)$, but in this paper we will only 
consider differentiable 
penalty functionals $\Rpen$ such that  $\partial \Rpen(\ftrue)$ is a 
singelton (see \cite[Prop.~I.5.3]{ET:76}).
In this case $\Delta_\Rpen$ is the second order Taylor reminder 
which can often be related to more familiar distance measures: 
\begin{example}\label{ex:bregmanNormBound}
	\begin{enumerate}
		\item Let $\Rspace$ be a Hilbert space, then choosing $\Rpen(f)=\frac{1}{2}\altnorm{f-f_0}{\Rspace}^2$ for some $f_0\in \Rspace$ one obtains that
		\(
			\Delta_\Rpen (f,\ftrue)=\frac{1}{2}\altnorm*{f-\ftrue}{\Rspace}^2.
		\)
		\item\label{ex:bregmanNormBound:convex} If $\Rspace$ is an  
$r$-convex Banach space, then by \cite[Lemma~2.7]{Bonesky2008} there 
exists a constant $C_\Rspace>0$ such that 
		\begin{equation} \label{eq:bregmanNormBound:convex}
			\frac{C_\Rspace}{r} \altnorm*{f-\ftrue}{\Rspace}^r\leq \Delta_{\frac{1}{r}\altnorm{\cdot}{\Rspace}^r}(f,\ftrue), \qquad f,\ftrue\in\Rspace.
		\end{equation}
	\end{enumerate}
\end{example}

A variational source condition as first proposed in \cite{HKPS:07} for 
$\psi=\sqrt{\cdot}$ is an abstract smoothness condition for $\ftrue$ 
of the form
\begin{equation}\label{eq:vsc}
\forall f\in \Dset \colon\quad 
	\pairing*{\fsub}{\ftrue-f}\leq \frac{1}{2} \Delta_{\Rpen}(f,\ftrue) +\psi\rbra*{\altnorm{F(\ftrue)-F(f)}{\Yspace}^t}.
\end{equation}
Here $\psi:[0,\infty)\to [0,\infty)$ is a concave index function, i.e.\ $\psi$ is concave, continuous, increasing, and $\psi(0)=0$. \S \ref{sec:strategy} is devoted to the 
interpretation of such conditions.


In \cite[Ass.~1]{Werner2012} the notion of an \emph{effective noise level}
$\err \colon \Yspace\to [0,\infty)$ was introduced. $\err(F(\fad))$ bounds the effect of data noise 
on $\fad$, see Proposition \ref{prop:vscRate} and \S~\ref{sec:conv_rand_noise} below.  
It is defined for some fixed $\Cerr\geq 1$ by   
\begin{align}\label{eq:defErr}
	\err(g):&=
	\mathcal{S}(\gtrue)-\mathcal{S}(g)
	+\frac{1}{\Cerr t}\altnorm*{g-\gtrue}{\Yspace}^t
\end{align}
with the data fidelity term $\mathcal{S}(g):=\frac{1}{t}\altnorm*{g-\gobs}{\Yspace}^t$ in case of 
\eqref{eq:tikhonov_det} and $\mathcal{S}(g):=\frac{1}{2}\altnorm*{g}{\Yspace}^2-\pairing*{\gobs}{g}$ 
in case of \eqref{eq:tikhonov} where we set $t:=2$. 
In the deterministic case we choose $\Cerr=2^{t-1}$ in \eqref{eq:defErr}, in which case one can show that 
$\err(g)\leq\overline{\err}:=\frac{2}{t}\delta^t$, see \cite[Ex.~3.1]{Werner2012}. For bounds of $\err$ 
in the case of Poisson and impulsive noise we refer to \cite{Hohage2014,Werner2012}, and for bounds 
of $\err$ for the noise model \eqref{eq:rand_noise} to \S \ref{sec:conv_rand_noise}.

\subsection{Convergence rates for deterministic errors}
For the convergence rate theorem we will assume that $\psi$ is concave and 
define for convenience $\varphi_\psi (\tau):=(-\psi)^*\left(-\frac{1}{\tau}\right)$ which governs the bias. Here $\psi^*$ denotes the Fenchel conjugate for a convex function $\psi$  given by
\(
	\psi^*(v)=\sup\nolimits_u \sbra*{\pairing{v}{u}-\psi(u)}.
\)
Some corresponding calculus rules can be found, e.g.\ in \cite{ET:76}.
\begin{example}\label{ex:conjugate}
	The most common examples of index functions $\psi$ appearing in variational source conditions are either of H\"older or logarithmic type. If one extends these functions by $\psi(\tau)=-\infty$ for $\tau<0$ one can calculate $\varphi_\psi$ and obtains (see e.g.\ \cite{Flemming:12}):
	\begin{subequations}
		\begin{align}\label{eq:conjugate:hoelder} 
		&\mbox{H\"older-type:} &&
					\psi(\tau)= \tau^\mu \quad \implies\quad  \varphi_\psi(\tau)= c_\mu \tau^\frac{\mu}{1-\mu}\\
	\label{eq:conjugate:logarithmic} 
	&\mbox{logarithmic:} &&
		\begin{aligned}
			\psi(\tau) &=(- \ln \tau)^{-p} (1+o(1)) \text{ as }\tau \rightarrow 0\\
			\implies \varphi_\psi(\tau)&=(- \ln \tau)^{-p} (1+o(1))\text{ as }\tau \rightarrow 0.
		\end{aligned}
		\end{align}
	\end{subequations}
\end{example}

Under the assumption of a variational source condition one can prove the following convergence rates 
in terms of the effective noise model: 

\begin{proposition}[{\cite[Thm.~2.3]{Hohage2014}}]\label{prop:vscRate}
Assume	the variational source condition  \eqref{eq:vsc} holds true and let  $\fad$ be 
a	global minimizer  of the Tikhonov functional in \eqref{eq:tikhonov_det}.
	\begin{enumerate}
		\item\label{prop:vscRate:ratePointwise} Then $\fad$ satisfies the following error bounds:
		\begin{subequations}
		\begin{align}
			\frac{1}{2} \Delta_\Rpen(\fad,\ftrue) &\leq \frac{\err(F(\fad))}{2\alpha}+\varphi_\psi(2\Cerr \alpha \label{eq:vscRate:ratePointwise}),\\
			\altnorm*{F( \fad)-\gtrue}{\Yspace}^2 &\leq 2 \Cerr \err(F(\fad))+4 \Cerr \alpha \varphi_\psi (4\Cerr \alpha).\label{eq:vscRate:imageConvergence}
		\end{align}
		\end{subequations}
		\item\label{prop:vscRate:rateGlobal} If there exists some constant $\overline{\err}\geq \err(F(f))$ for all 
		$f$, the infimum of the right hand side of \eqref{eq:vscRate:ratePointwise} with $\err(F(\fad))$ replaced by 
		$\overline \err$ is attained if and only if $\alpha=\bar\alpha$ where $\bar\alpha$ is a solution to
		\begin{subequations}
		\begin{align}\label{eq:vscRate:globalAlpha}
			\frac{-1}{2\Cerr \bar\alpha}\in \partial (-\psi)(2\Cerr \overline{\err}).
		\end{align}
		In this case one obtains the convergence rate
		\begin{align}\label{eq:vscRate:rateGlobal}
			\frac{1}{2} \Delta_\Rpen(\widehat{f}_{\bar{\alpha}},\ftrue)\leq \Cerr \psi(\overline{\err}).
		\end{align}
		\end{subequations}
	\end{enumerate}
\end{proposition}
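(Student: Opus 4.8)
The plan is to combine the defining minimality of $\fad$ with the variational source condition \eqref{eq:vsc} and a single Fenchel--Young inequality, and then, for the second part, to optimize the resulting bound over $\alpha$ by convex duality. First I would exploit minimality: since $\ftrue\in\Dset$ is admissible in \eqref{eq:tikhonov_det}, comparing the Tikhonov objective at $\fad$ with its value at $\ftrue$ gives
\[
\alpha\rbra*{\Rpen(\fad)-\Rpen(\ftrue)}\leq \mathcal{S}(\gtrue)-\mathcal{S}(F(\fad)).
\]
I would then rewrite the left-hand side through the Bregman distance, $\Rpen(\fad)-\Rpen(\ftrue)=\Delta_\Rpen(\fad,\ftrue)+\pairing{\fsub}{\fad-\ftrue}$, and insert \eqref{eq:vsc} with $f=\fad$ to absorb the cross term $\alpha\pairing{\fsub}{\ftrue-\fad}$ together with one half of the Bregman distance. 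Abbreviating $s:=\altnorm{F(\fad)-\gtrue}{\Yspace}^t$ and substituting the definition \eqref{eq:defErr} of the effective noise level for $\mathcal{S}(\gtrue)-\mathcal{S}(F(\fad))$, this produces the master inequality
\[
\tfrac{\alpha}{2}\Delta_\Rpen(\fad,\ftrue)+\tfrac{1}{\Cerr t}\,s\leq \err(F(\fad))+\alpha\,\psi(s).
\]

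Next I would separate the bias from the residual by the conjugate inequality $\psi(s)\leq\varphi_\psi(\tau)+\tfrac{s}{\tau}$, which is merely a restatement of $\varphi_\psi(\tau)=(-\psi)^*(-\tfrac1\tau)$ and holds for every $\tau>0$. Choosing $\tau$ proportional to $\alpha$ so that the term linear in $s$ is cancelled, and then dropping the remaining nonnegative residual, yields the Bregman estimate \eqref{eq:vscRate:ratePointwise}; keeping instead a positive multiple of $s$ and dropping the nonnegative term $\tfrac{\alpha}{2}\Delta_\Rpen$ yields the image-space estimate \eqref{eq:vscRate:imageConvergence}. The only delicate point in this part is the bookkeeping of the multiplicative constants, since the two estimates require genuinely different choices of $\tau$; everything else is elementary.

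For part \eqref{prop:vscRate:rateGlobal} I would read the right-hand side of \eqref{eq:vscRate:ratePointwise}, with $\err(F(\fad))$ replaced by $\overline{\err}$, as a function of $\alpha$ to be minimized. The key tool is the inf-convolution identity $\psi(u)=\inf_{\tau>0}\bigl[\varphi_\psi(\tau)+\tfrac{u}{\tau}\bigr]$, which follows from $(-\psi)^{**}=-\psi$ together with the monotonicity of $\psi$. Applied with $u$ a fixed multiple of $\overline{\err}$ and $\tau=2\Cerr\alpha$, it shows at once that the infimum over $\alpha$ equals $\psi$ evaluated at that multiple of $\overline{\err}$, and that it is attained precisely when the associated Fenchel equality holds, i.e.\ when $-\tfrac{1}{2\Cerr\bar\alpha}\in\partial(-\psi)(2\Cerr\overline{\err})$, which is exactly \eqref{eq:vscRate:globalAlpha}. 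A final concavity estimate on $\psi$ then converts this value into the stated $\Cerr\psi(\overline{\err})$, giving \eqref{eq:vscRate:rateGlobal}.

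The hard part, as I see it, is twofold: tuning the Fenchel--Young step so that it delivers precisely the advertised constants, and justifying the ``if and only if''/attainment claim in part \eqref{prop:vscRate:rateGlobal}. The latter is cleanest through the duality viewpoint above, because $-\psi$ need not be differentiable, so the optimality condition is genuinely a subdifferential inclusion rather than an equation; the inf-convolution identity is what makes both the optimal value and the characterization of the optimal $\bar\alpha$ fall out simultaneously.
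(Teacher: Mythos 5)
Your strategy is the right one, and in fact it is the only proof on offer anywhere: the paper itself contains no argument for this proposition (it is quoted verbatim from \cite[Thm.~2.3]{Hohage2014}), and the proof in that reference proceeds exactly as you outline --- comparison of the Tikhonov values at $\fad$ and $\ftrue$, rewriting the penalty difference via the Bregman distance, inserting \eqref{eq:vsc}, and a Fenchel--Young step, with part~(ii) handled by biconjugation $(-\psi)^{**}=-\psi$. Your master inequality
\begin{equation*}
\tfrac{\alpha}{2}\Delta_\Rpen(\fad,\ftrue)+\tfrac{1}{\Cerr t}\,s\;\leq\; \err(F(\fad))+\alpha\,\psi(s),\qquad s:=\altnorm{F(\fad)-\gtrue}{\Yspace}^t,
\end{equation*}
is correct, and so is your duality treatment of part~(ii): the inf-convolution identity, the attainment characterization as a subdifferential inclusion, and the concavity step $\psi(Cu)\le C\psi(u)$ for $C\ge 1$ are all sound.

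The gap is in the constants, and it is not mere bookkeeping. From the master inequality, Fenchel--Young $\psi(s)\le s/\tau+\varphi_\psi(\tau)$ with the cancellation choice $\tau=\Cerr t\alpha$ yields
\begin{equation*}
\tfrac12\Delta_\Rpen(\fad,\ftrue)\;\le\; \frac{\err(F(\fad))}{\alpha}+\varphi_\psi(\Cerr t\alpha),
\end{equation*}
i.e.\ the coefficient $1/\alpha$ rather than the $1/(2\alpha)$ of \eqref{eq:vscRate:ratePointwise}; likewise, for $t=2$ the image bound comes out as $s\le 4\Cerr\err(F(\fad))+4\Cerr\alpha\varphi_\psi(4\Cerr\alpha)$, not $2\Cerr\err(F(\fad))+\dots$ as in \eqref{eq:vscRate:imageConvergence}. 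No tuning of $\tau$ repairs this: the master inequality treats $\Delta_\Rpen$, $s$ and $\err$ as independent quantities, and taking $s=0$ with $\err(F(\fad))$ large shows the stated bound simply does not follow from it, so either an ingredient beyond your three inequalities is needed or the constants reflect a different normalization of the effective noise level in the cited source than the one in \eqref{eq:defErr}. Tellingly, the proposition is internally inconsistent in exactly this way, and your sketch inherits the inconsistency between its two parts: minimizing $\overline\err/(2\alpha)+\varphi_\psi(2\Cerr\alpha)$ leads to the condition $-1/(2\Cerr\bar\alpha)\in\partial(-\psi)(\Cerr\overline\err)$ with optimal value $\psi(\Cerr\overline\err)\le\Cerr\psi(\overline\err)$, whereas the stated rule \eqref{eq:vscRate:globalAlpha}, with $2\Cerr\overline\err$ inside the subdifferential, is precisely the minimizer of $\overline\err/\alpha+\varphi_\psi(2\Cerr\alpha)$, whose value is $\psi(2\Cerr\overline\err)\le 2\Cerr\psi(\overline\err)$. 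You claim the $\err/(2\alpha)$ bound in part~(i) but then invoke the optimality condition belonging to the $\err/\alpha$ bound in part~(ii). Carried out consistently, your argument proves either \eqref{eq:vscRate:globalAlpha} together with the rate $2\Cerr\psi(\overline\err)$, or \eqref{eq:vscRate:rateGlobal} together with the rule $-1/(2\Cerr\bar\alpha)\in\partial(-\psi)(\Cerr\overline\err)$ --- but not the statement exactly as written; this should be flagged rather than deferred to ``delicate bookkeeping.''
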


\subsection{Convergence rates for random noise}\label{sec:conv_rand_noise}
From now on we assume that the reader has some basic knowledge about Besov spaces which can be found in Appendix \ref{sec:besovSpaces}.
In the following let $\Omega$ be either a bounded Lipschitz domain or the $d$-dimensional torus $\Td:=\Rd/\Zset^d$  and $\mathscr{D}'(\Omega)$ the space of distributions on $\Omega$. In this subsection we consider noise models \eqref{eq:rand_noise} 
with a random variable $\rand$ in $\mathscr{D}'(\Omega)$. To prove convergence rates we need a deviation inequality for $\rand$  which is given by the following assumption.
\begin{assumption}\label{ass:dev_ineq}
	Assume that for all $\tilde{p}\in (1,\infty)$ we have $\rand\in B^{-d/2}_{\tilde{p},\infty}(\Omega)$ almost surely and that there exist constants $C_{\rand},M_{\rand},\mu>0$, such that
	\begin{align*}
\forall t>0\colon\quad 
		\prob\left(\altnorm*{\rand}{B^{-d/2}_{\tilde{p},\infty}}>M_{\rand}+t\right)\le \exp(-C_{\rand}t^\mu).
	\end{align*}
\end{assumption}
 If $\rand=W$ is a Gaussian white noise  and $\Omega=\Td$, then  Assumption \ref{ass:dev_ineq} holds true with $\mu=2$, 
$M_{\rand}$ being the median of $\altnorm{W}{B^{-d/2}_{\tilde{p}^\prime,\infty}}$ and $C_{\rand}$ depending on $\tilde{p}$ and $d$ 
(see \cite[Thm.~3.4, Cor.~3.7]{Veraar2011} or \cite[remark after Thm.~4.4.3]{Nickl2016}). 
In the following let $p^\prime$ denote the H\"older conjugate for any  number $1 \leq p \leq \infty$.


For random noise we choose $\Cerr=1$ in \eqref{eq:defErr} and obtain $\err(g)= \varepsilon\pairing{\rand}{g-\gtrue}$, hence it is natural to estimate the error functional for $p\in (1,2]$ via
\begin{equation}\label{eq:effective_noise_rand}
	\err\rbra*{g} =\varepsilon\pairing*{\rand}{g-\gtrue} \leq \varepsilon\altnorm*{\rand}{B^{-d/2}_{p^\prime,\infty}} \altnorm*{g-\gtrue}{B^{d/2}_{p,1}}
\end{equation}
and the challenge is to find a good control of the second factor. We will show that this can be done via an interpolation approach which for our two model problems results again in optimal rates (for $q\geq 2$).
To formulate a general error bound, we will assume for the moment that the second factor can be estimated 
as follows:
\begin{assumption}\label{ass:interpolation}
There exist constants $C,\beta, \gamma>0$ such that the inequality
	\begin{equation}\label{eq:interpolation}
		\altnorm*{F(f_1)-F(f_2)}{B^{d/2}_{p,1}}\leq C \altnorm*{F(f_1)-F(f_2)}{L^2}^\beta \Delta_\Rpen (f_1,f_2)^\gamma
	\end{equation}
	holds true for all $f_1,f_2\in \Rspace$.
\end{assumption}
Often this assumption can be verified by Remark \ref{rem:interpolationBregmanBound} below.

\begin{theorem}\label{thm:whiteNoiseRate}
	Let a variational source condition \eqref{eq:vsc} and Assumption \ref{ass:interpolation} be fulfilled, 
	and let $\fad$ be a global minimizer of the Tikhonov functional in \eqref{eq:tikhonov}.
	\begin{enumerate}
		\item\label{thm:whiteNoiseRate:error} If $0<\beta<2$, then the effective noise level at $F(\fad)$ is bounded by
		\begin{equation*}
			\err\rbra*{F(\fad) }\leq C \altnorm*{\varepsilon\rand}{B^{-d/2}_{p^\prime,\infty}}^{\frac{2}{2-\beta}} \Delta_\Rpen(f,\ftrue)^{\frac{2\gamma}{2-\beta}} +2\alpha \varphi_\psi(4\alpha).
		\end{equation*}
		\item\label{thm:whiteNoiseRate:rate} If in addition $0<\gamma<\frac{1}{2}(2-\beta)$, this implies the error bound
		\begin{equation*}
			\frac{1}{2} \Delta_\Rpen(\fad,\ftrue) \leq C \alpha^{-\frac{2-\beta}{2-\beta-2\gamma} } \altnorm*{\varepsilon\rand}{B^{-d/2}_{p^\prime,\infty}}^{\frac{ 2}{(2-\beta)-2\gamma}}+ 4 \varphi_\psi ( 4\alpha ).
		\end{equation*}
	\end{enumerate}
\end{theorem}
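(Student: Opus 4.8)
The plan is to bootstrap the two estimates of Proposition~\ref{prop:vscRate} (specialised to $\Cerr=1$) against the interpolation inequality \eqref{eq:interpolation}, exploiting that the hypotheses $0<\beta<2$ and $0<\gamma<\tfrac12(2-\beta)$ force the relevant self-referential exponents to lie strictly below one, so that the obstructing terms can be absorbed by Young's inequality.

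For part~(i) I would start from the random-noise form of the effective noise level $\err(F(\fad))=\varepsilon\pairing{\rand}{F(\fad)-\gtrue}$ together with the duality bound \eqref{eq:effective_noise_rand}, and insert Assumption~\ref{ass:interpolation} with $f_1=\fad$, $f_2=\ftrue$ (so that $F(\ftrue)=\gtrue$). This yields
\[
  \err(F(\fad))\le C\,\altnorm{\varepsilon\rand}{B^{-d/2}_{p',\infty}}\,\altnorm{F(\fad)-\gtrue}{L^2}^{\beta}\,\Delta_\Rpen(\fad,\ftrue)^{\gamma}.
\]
The factor $\altnorm{F(\fad)-\gtrue}{L^2}^{2}$ is then controlled by the image-space estimate \eqref{eq:vscRate:imageConvergence}, which, however, reintroduces $\err(F(\fad))$ on the right. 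Since $\beta/2<1$, I would split $\altnorm{F(\fad)-\gtrue}{L^2}^{\beta}=\bigl(\altnorm{F(\fad)-\gtrue}{L^2}^{2}\bigr)^{\beta/2}$ by Young's inequality with conjugate exponents $2/\beta$ and $2/(2-\beta)$, choosing the free Young parameter so small that the resulting multiple of $\altnorm{F(\fad)-\gtrue}{L^2}^{2}$ --- and hence of $\err(F(\fad))$ --- can be moved to the left-hand side. Solving for $\err(F(\fad))$ gives the claimed bound, with the $2\alpha\varphi_\psi(4\alpha)$ contribution arising from the bias part of \eqref{eq:vscRate:imageConvergence}.

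For part~(ii) I would feed the just-proved bound on $\err(F(\fad))$ into the pointwise estimate \eqref{eq:vscRate:ratePointwise}, namely $\tfrac12\Delta_\Rpen(\fad,\ftrue)\le\err(F(\fad))/(2\alpha)+\varphi_\psi(2\alpha)$. Using monotonicity of $\varphi_\psi$ to bound $\varphi_\psi(2\alpha)\le\varphi_\psi(4\alpha)$ and collecting the bias terms, the only remaining obstruction is a summand proportional to $\Delta_\Rpen(\fad,\ftrue)^{2\gamma/(2-\beta)}$ on the right. Here the extra hypothesis $\gamma<\tfrac12(2-\beta)$ is exactly what guarantees $2\gamma/(2-\beta)<1$, so a second application of Young's inequality, with conjugate exponents $\tfrac{2-\beta}{2\gamma}$ and $\tfrac{2-\beta}{2-\beta-2\gamma}$, absorbs this term into $\tfrac12\Delta_\Rpen(\fad,\ftrue)$. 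Propagating the exponent $\tfrac{1}{1-2\gamma/(2-\beta)}=\tfrac{2-\beta}{2-\beta-2\gamma}$ through the $\alpha^{-1}$ and noise factors produces the stated powers $\alpha^{-(2-\beta)/(2-\beta-2\gamma)}$ and $\altnorm{\varepsilon\rand}{B^{-d/2}_{p',\infty}}^{2/((2-\beta)-2\gamma)}$.

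The main obstacle is the circular appearance of $\err(F(\fad))$ and $\Delta_\Rpen(\fad,\ftrue)$ on both sides of the estimates: neither quantity can be bounded in isolation. The whole argument turns on recognising that the two exponent restrictions are precisely the ones making the self-referential powers ($\beta/2$ in the first step, $2\gamma/(2-\beta)$ in the second) fall into $(0,1)$, the regime in which Young's inequality allows one to reabsorb a small multiple of the unknown quantity into the left-hand side. Keeping the constants benign --- so that the bias coefficients come out as the stated $2\alpha\varphi_\psi(4\alpha)$ and $4\varphi_\psi(4\alpha)$ rather than larger multiples --- is then a matter of fixing the Young parameters (for instance so that the absorbed coefficient equals $\tfrac12$) with a little care.
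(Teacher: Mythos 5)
Your proposal is correct and follows essentially the same route as the paper's proof: duality plus Assumption \ref{ass:interpolation}, substitution of the image-space bound \eqref{eq:vscRate:imageConvergence}, and Young's inequality to absorb the self-referential $\err$-term, then the pointwise bound \eqref{eq:vscRate:ratePointwise} with a second application of Young's inequality (exponents $\tfrac{2-\beta}{2\gamma}$ and $\tfrac{2-\beta}{2-\beta-2\gamma}$) absorbing $\Delta_\Rpen(\fad,\ftrue)^{2\gamma/(2-\beta)}$ into $\tfrac12\Delta_\Rpen(\fad,\ftrue)$. The only (immaterial) difference is the order in which you invoke Young's inequality versus the image-space estimate in part (i); the exponent arithmetic and the bias bookkeeping match the paper's argument.
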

\begin{proof}
	For \eqref{thm:whiteNoiseRate:error} note that due to Assumption \ref{ass:interpolation} we obtain
	\begin{align*}
		\err\rbra*{F(\fad)} &=\pairing*{\varepsilon\rand}{F(\fad)-\gtrue} \leq \altnorm*{\varepsilon\rand}{B^{-d/2}_{p^\prime,\infty}} \altnorm*{F(\fad)-\gtrue}{B^{d/2}_{2,1}}\\
		&\leq C \altnorm*{\varepsilon\rand}{B^{-d/2}_{p^\prime,\infty}} \altnorm*{F(\fad)-\gtrue}{L^2}^\beta \Delta_\Rpen (\fad,\ftrue)^\gamma. 
	\end{align*}
	By the image space convergence rate \eqref{eq:vscRate:imageConvergence} of Proposition \ref{prop:vscRate} we can estimate 
	\begin{align*}
		\err\rbra*{F(\fad)} &\leq C \altnorm*{\varepsilon\rand}{B^{-d/2}_{p^\prime,\infty}}  \sbra[\big]{2 \err\rbra*{F(\fad)} +4 \alpha \varphi_\psi(4\alpha)}^{\beta/2} \Delta_\Rpen(\fad,\ftrue)^{\gamma} \\
		&\leq C \altnorm*{\varepsilon\rand}{B^{-d/2}_{p^\prime,\infty}}^{\frac{2}{2-\beta}} \Delta_\Rpen(\fad,\ftrue)^{ \frac{2\gamma}{2-\beta}} + \frac{1}{2}\sbra[\big]{ \err\rbra*{F(\fad)} +2\alpha \varphi_\psi(4\alpha)}
	\end{align*}
	by Young's inequality, with a generic constant $C>0$. Rearranging terms yields the bound on the effective noise level.
	
	To prove (\ref{thm:whiteNoiseRate:rate}), note that due to eq.~\eqref{eq:vscRate:ratePointwise} in Proposition \ref{prop:vscRate} we have
	\begin{align*}
		\Delta_\mathcal{R}(\fad,\ftrue)\le \frac{ \err(F(\fad))}{\alpha}+2\varphi_\psi(2\alpha)\le \frac{ \err(F(\fad))}{\alpha}+2\varphi_\psi(4\alpha).
	\end{align*}
	Together with the first part we obtain
	\begin{align*}
		\Delta_\mathcal{R}(\fad,\ftrue)&\leq C \alpha^{-1}\Delta_\Rpen(\fad,\ftrue)^{ \frac{2\gamma}{2-\beta}}\altnorm*{\varepsilon\rand}{B^{-d/2}_{p^\prime,\infty}}^{\frac{2}{2-\beta}}+ 4 \varphi_\psi(4\alpha)\\
		&\leq \frac{1}{2} \Delta_\Rpen(\fad,\ftrue) + C \alpha^{-\frac{2-\beta}{2-\beta-2\gamma} } \altnorm*{\varepsilon\rand}{B^{-d/2}_{p^\prime,\infty}}^{\frac{2}{2-\beta-2\gamma}}+ 4 \varphi_\psi ( 4\alpha )
	\end{align*}
	which proves the claim.
\end{proof}

\section{Verification of variational source conditions}\label{sec:strategy}
In \cite[Thm.~2.1]{HW:17} two of the authors formulated a strategy for the verification of 
variational source conditions in terms of orthogonal projection operators. In this section 
we will extend this strategy to Banach space settings. It turns out that the smoothness of subgradients 
of the solution rather than the smoothness of the solution itself determines the convergence rate. 
Therefore, a crucial step will be the analysis of the smoothness of subgradients in Besov spaces.

\subsection{Preliminaries}\label{sec:prelim}
One of the main difficulties when trying to prove a variational source condition is the Bregman distance 
appearing on the right hand side of \eqref{eq:vsc} since its properties depend very much on the specific choice 
of the regularization functional. Hence we are going to assume that it can be estimated from below by some 
norm power.
\begin{assumption}\label{ass:bregmanNormBound}
There exist constants $C_\Delta>0$ and $r>1$ such that 
	\begin{equation*}
		C_\Delta \altnorm*{f_1-f_2}{\Rspace}^r\leq \Delta_\Rpen (f_2,f_1)
\qquad \mbox{for all }f_1,f_2\in\Rspace.
	\end{equation*} 
\end{assumption}

This assumption is satisfied in particular if $\Xspace$ is convex of power type and $\Rpen$ is a 
norm power, see Example \ref{ex:bregmanNormBound}. 
However the case of $\Rpen(\cdot)=\altnorm{\cdot}{\ell^1}$ shows that this is not always the case, if e.g.\ $f_1,f_2>0$, then $\Delta_R(f_2,f_1)=0$, but $\altnorm{f_1-f_2}{\ell^1}$ might be arbitrary large.

\begin{remark}\label{rem:interpolationBregmanBound}
	Under Assumption \ref{ass:bregmanNormBound} eq.~\eqref{eq:interpolation} in Assumption \ref{ass:interpolation} is fulfilled if  
	\begin{equation}\label{eq:interpolation_norms}
		\altnorm*{F(f_1)-F(f_2)}{B^{d/2}_{p,1}}\leq C \altnorm*{F(f_1)-F(f_2)}{L^2}^\beta \altnorm*{f_1-f_2}{\Rspace}^{\gamma r}. 
	\end{equation}	
	Note that for linear operators $F$ one necessarily has $\beta+\gamma r=1$. 
	For a nonlinear operators $F$ Assumption \ref{ass:interpolation} can also be verified by standard interpolation 
inequalities if $F$ maps Lipschitz continuously into some space of higher regularity.
\end{remark}

\subsection{Basic strategy}
Our main tool for the derivation of variational source conditions will be the following generalization 
of Theorem~2.1 in \cite{HW:17}:

\begin{theorem}\label{thm:strategy}
Let $\Xspace$ and $\Yspace$ 
be Banach spaces and $\Rpen$ a penalty term such that Assumption \ref{ass:bregmanNormBound} is fulfilled. 
Let $\ftrue\in\Dset$ and $\fsub\in\partial \Rpen(\ftrue)$. Suppose that there exists a family of operators $P_j\colon \Rspace^*\rightarrow \Rspace^*$ for $j\in J$ an index set such that for some functions $\kappa,\sigma\colon J\rightarrow (0,\infty)$ and a constant $\gamma\geq 0$ the following holds true for all $j\in J$:
	\begin{subequations}\label{eq:strategy}
	\begin{align}
		&\altnorm*{(I-P_j) \fsub}{\Rspace^*}\leq \kappa(j) \label{eq:strategySmoothness}\\
		&\inf_{j\in J} \kappa(j)=0 \label{eq:strategyIndex}\\
		&\begin{aligned}\label{eq:strategyIllposed}
			&\pairing*{P_j \fsub}{\ftrue-f}\leq \sigma(j) \altnorm*{F(\ftrue)-F(f)}{\Yspace}+\gamma \kappa(j) \altnorm*{\ftrue-f}{\Rspace}\\
			&\qquad\text{for all $f\in \Dset$ with }\altnorm*{\ftrue-f}{\Rspace}\leq \rbra*{\tfrac{2}{C_\Delta} \altnorm*{\fsub}{\Rspace^*}}^{\frac{r^\prime}{r}}.
		\end{aligned}	
	\end{align}
	\end{subequations}
	Then $\ftrue$ fulfills a variational source condition \eqref{eq:vsc} with the concave index function
	\begin{equation}\label{eq:strategyPsi}
		\psi_\mathrm{vsc}(\tau)=\inf_{j\in J} \sbra*{\sigma(j)\tau^{1/t}+\frac{1}{r^\prime} \rbra*{\tfrac{2}{C_\Delta}}^{r^\prime/r} \rbra*{1+\gamma}^{r^\prime} \kappa(j)^{r^\prime}}.
	\end{equation}
\end{theorem}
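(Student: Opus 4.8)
The plan is to bound $\pairing*{\fsub}{\ftrue-f}$ for an arbitrary $f\in\Dset$ and to split the analysis into two regimes according to whether $f$ lies in the ball around $\ftrue$ on which the local estimate \eqref{eq:strategyIllposed} is available, namely $\altnorm*{\ftrue-f}{\Rspace}\leq\rbra*{\tfrac{2}{C_\Delta}\altnorm*{\fsub}{\Rspace^*}}^{r^\prime/r}$. The key point I would exploit is that the radius of this ball is calibrated precisely so that outside it the quadratic-type lower bound of Assumption \ref{ass:bregmanNormBound} alone already dominates the pairing.

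First I would treat the far regime $\altnorm*{\ftrue-f}{\Rspace}>\rbra*{\tfrac{2}{C_\Delta}\altnorm*{\fsub}{\Rspace^*}}^{r^\prime/r}$, where \eqref{eq:strategyIllposed} need not hold. Here I would use only duality, $\pairing*{\fsub}{\ftrue-f}\leq\altnorm*{\fsub}{\Rspace^*}\altnorm*{\ftrue-f}{\Rspace}$, together with the identity $\tfrac{1}{r-1}=\tfrac{r^\prime}{r}$ (equivalently $r^\prime(r-1)=r$). Raising the defining inequality of the regime to the power $r-1$ gives $\altnorm*{\fsub}{\Rspace^*}<\tfrac{C_\Delta}{2}\altnorm*{\ftrue-f}{\Rspace}^{r-1}$, whence $\pairing*{\fsub}{\ftrue-f}\leq\tfrac{C_\Delta}{2}\altnorm*{\ftrue-f}{\Rspace}^r\leq\tfrac12\Delta_\Rpen(f,\ftrue)$ by Assumption \ref{ass:bregmanNormBound}. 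Since $\psi_\mathrm{vsc}\geq0$, the variational source condition \eqref{eq:vsc} holds in this regime without using the $\psi$-term.

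In the near regime I would fix $j\in J$ and split $\fsub=P_j\fsub+(I-P_j)\fsub$, estimating $\pairing*{(I-P_j)\fsub}{\ftrue-f}\leq\kappa(j)\altnorm*{\ftrue-f}{\Rspace}$ by \eqref{eq:strategySmoothness} and $\pairing*{P_j\fsub}{\ftrue-f}$ by \eqref{eq:strategyIllposed}, obtaining $\pairing*{\fsub}{\ftrue-f}\leq\sigma(j)\altnorm*{F(\ftrue)-F(f)}{\Yspace}+(1+\gamma)\kappa(j)\altnorm*{\ftrue-f}{\Rspace}$. To the last product I would apply Young's inequality with exponents $r,r^\prime$, choosing the splitting weight so that the $\altnorm*{\ftrue-f}{\Rspace}^r$ part has coefficient $\tfrac{C_\Delta}{2}$ and is absorbed into $\tfrac12\Delta_\Rpen(f,\ftrue)$, leaving a remainder $c_j:=\tfrac{1}{r^\prime}\rbra*{\tfrac{2}{C_\Delta}}^{r^\prime/r}(1+\gamma)^{r^\prime}\kappa(j)^{r^\prime}$. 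Writing $\tau:=\altnorm*{F(\ftrue)-F(f)}{\Yspace}^t$ and taking the infimum over $j$ on the right (the left-hand side being independent of $j$) yields exactly $\tfrac12\Delta_\Rpen(f,\ftrue)+\psi_\mathrm{vsc}(\tau)$. Combining the two regimes establishes \eqref{eq:vsc} for all $f\in\Dset$.

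Finally I would verify that $\psi_\mathrm{vsc}$ is a genuine concave index function: each summand $\tau\mapsto\sigma(j)\tau^{1/t}+c_j$ with $c_j\geq0$ is concave and increasing for $t\geq1$, so their pointwise infimum is concave and increasing; $\psi_\mathrm{vsc}(0)=\inf_j c_j=0$ by \eqref{eq:strategyIndex}; and the sandwich $0\leq\psi_\mathrm{vsc}(\tau)\leq\sigma(j)\tau^{1/t}+c_j$ forces right-continuity at $0$, with concavity giving continuity on $(0,\infty)$. The one computational subtlety is the Young step: the natural choice produces a constant $\rbra*{\tfrac{2}{C_\Delta r}}^{r^\prime/r}$, which I would relax to the stated $\rbra*{\tfrac{2}{C_\Delta}}^{r^\prime/r}$ using $r>1$ (a larger $\psi$ only weakens the source condition). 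I expect the main conceptual obstacle to be recognizing that the ball radius in \eqref{eq:strategyIllposed} and the norm-power lower bound on the Bregman distance interlock to dispose of the far regime entirely; once this is seen, everything else is routine bookkeeping.
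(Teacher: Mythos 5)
Your proposal is correct and follows essentially the same route as the paper's own proof: the same dichotomy between the far regime (where duality plus Assumption \ref{ass:bregmanNormBound} gives the bound with $\psi\equiv 0$) and the near regime (splitting $\fsub=P_j\fsub+(I-P_j)\fsub$, applying \eqref{eq:strategySmoothness}--\eqref{eq:strategyIllposed} and Young's inequality, then taking the infimum over $j$), followed by the same index-function check. The only cosmetic difference is the Young weight: the paper targets the coefficient $\tfrac{C_\Delta}{2r}$ for $\altnorm{\ftrue-f}{\Rspace}^r$, which produces the stated constant $\rbra*{\tfrac{2}{C_\Delta}}^{r^\prime/r}$ directly (absorbing via $\tfrac{1}{2r}\Delta_\Rpen\leq\tfrac12\Delta_\Rpen$), whereas you target $\tfrac{C_\Delta}{2}$ and then relax the resulting sharper constant $\rbra*{\tfrac{2}{C_\Delta r}}^{r^\prime/r}$ upward — both are valid.
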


Condition \eqref{eq:strategySmoothness} describes 
the smoothness of the solution (actually rather the smoothness of the subdifferential, but in the 
examples considered later one of the two uniquely determines the other, see Theorem \ref{thm:subdiffSmoothness}), 
whereas \eqref{eq:strategyIllposed} describes the local ill-posedness of the problem.

\begin{example}\label{ex:strategy}
	In order to illustrate these interpretations, consider the case that $\Xspace, \Yspace$ are Hilbert spaces, $F$ is an injective compact operator and let $(f_j,g_j,\sigma_j)_{j\in \Nset}$ be the corresponding singular system. Set  $P_j f= \sum_{k\leq j} \pairing{f}{f_j}f_j$. Then we obtain that $\fsub=\ftrue$ and  \eqref{eq:strategySmoothness} reads
	\begin{equation*}
		\altnorm*{(I-P_j)\ftrue}{\Xspace}=\rbra*{\sum\nolimits_{k>j} \absval{\pairing{\ftrue}{f_j}}^2}^{1/2}=:\kappa(j)
	\end{equation*}
	i.e.\ $\kappa$ measures the decay rate of the coefficients of $\ftrue$ in the system $(f_j)_{j\in \Nset}$. In the case where $f_j$ are trigonometric polynomials this measures classical smoothness. Denoting by $Q_j g = \sum_{k\leq j} \pairing{g}{g_j}g_j$ we obtain an inequality of the form \eqref{eq:strategyIllposed} via
	\begin{align*}
		\pairing{P_j \ftrue}{\ftrue-f} &\leq \altnorm*{P_j\ftrue}{\Xspace} \altnorm*{P_j(\ftrue-f)}{\Xspace} \leq \altnorm*{P_j\ftrue}{\Xspace} \frac{1}{\sigma_j} \altnorm*{Q_j T(\ftrue-f)}{\Yspace}\\
		&= \rbra*{\sum\nolimits_{k\leq j} \frac{\absval{\pairing{\ftrue}{f_j}}^2}{\sigma_j^2}}^{1/2}  \altnorm*{T(\ftrue-f)}{\Yspace}= \sigma(j)\altnorm*{T(\ftrue-f)}{\Yspace}
	\end{align*}
with $\sigma(j):= \rbra{\sum\nolimits_{k\leq j} \absval{\pairing{\ftrue}{f_j}}^2}^{1/2}/\sigma_j\leq \altnorm{\ftrue}{\Xspace}/\sigma_j$, 
	i.e.\ $\sigma$ measures the decay rate of the singular values of $F$ relative to the decay rate of the coefficients of $\ftrue$ in the singular system.
\end{example}

\begin{proof}[Proof of Theorem \ref{thm:strategy}:]
First assume that
$f$ does not satisfy the condition in the second line of 
\eqref{eq:strategyIllposed} or  equivalently that 
$\altnorm{\fsub}{\Rspace^*}\leq\tfrac{C_\Delta}{2}\altnorm{\ftrue-f}{\Rspace}^{r-1}$. Then
	\begin{align*}
		\pairing*{\fsub}{\ftrue-f}&\leq \altnorm{\fsub}{\Rspace^*}\altnorm{\ftrue-f}{\Rspace}\leq \tfrac{C_\Delta}{2}\altnorm{\ftrue-f}{\Rspace}^{r}\leq \tfrac{1}{2}\Delta_{\Rpen}(f,\ftrue),
	\end{align*}
	that is the variational source condition holds true even with $\psi\equiv 0$.	Otherwise using \eqref{eq:strategySmoothness}, \eqref{eq:strategyIllposed} and Young's inequality we get for each $j\in J$ that
	\begin{align*}\allowdisplaybreaks
		&\pairing*{\fsub}{\ftrue-f}\\
		=\,&\pairing*{P_j\fsub}{\ftrue-f} + \pairing*{(I-P_j)\fsub}{\ftrue-f}\\
		\leq\,& \sigma(j) \altnorm*{F(\ftrue)-F(f)}{\Yspace}+(1+\gamma) \kappa(j) \altnorm*{\ftrue-f}{\Rspace}\\
		\leq\,& \sigma(j) \altnorm*{F(\ftrue)-F(f)}{\Yspace}+\tfrac{1}{r^\prime} \rbra*{\tfrac{2}{C_\Delta}}^{r^\prime/r} \rbra*{1+\gamma}^{r^\prime} \kappa(j)^{r^\prime} + \tfrac{C_\Delta}{2r}\altnorm*{\ftrue-f}{\Rspace}^r\\
		\leq\,& \sigma(j) \altnorm*{F(\ftrue)-F(f)}{\Yspace}+\tfrac{1}{r^\prime} \rbra*{\tfrac{2}{C_\Delta}}^{r^\prime/r} \rbra*{1+\gamma}^{r^\prime} \kappa(j)^{r^\prime} + \tfrac{1}{2}\Delta_{\Rpen}(f,\ftrue).
	\end{align*}
	Taking the infimum over the right hand side with respect to $j\in J$ yields \eqref{eq:strategyPsi} with 
	$\tau=\altnorm{F(\ftrue)-F(f)}{\Yspace}^t$.
	
Note that $\psi$ is defined as an infimum over concave and increasing functions and hence is also increasing and concave. By \eqref{eq:strategyIndex} we obtain that $\psi(0)=0$, and hence $\psi$ is indeed an index function.
\end{proof}
For linear operators and under the conditions below one can choose $\gamma=0$, and the additional restriction 
that one needs \eqref{eq:strategyIllposed} only for $\altnorm*{\ftrue-f}{\Rspace}$ small is not necessary as already seen in the specific example. We have included both complications here since they may be needed  
for some non-linear operators, see e.g.\ \cite{HW:15}. 
We will present two differnt applications of the strategy in Sections \ref{sec:finite_smoothing} and \ref{sec:finite_smoothing} and discuss how they may generalize to similar setting in Remark \ref{rem:strategyLipschitz} and \ref{rem:strategyInterchange}.

\subsection{Besov spaces}\label{sec:besovSpacesGeneral}
Recall that we assume $\Omega$ to be either a bounded Lipschitz domain or $\Td$. We now introduce two equivalent norms on $\Besovpq$, further details on these spaces can be found in Appendix \ref{sec:besovSpaces}.
\subsubsection{Wavelet systems}
For any $j\in\Nset_0$ let $I_j$ be a countable index set, let 
$I:=\{(j,l):j\in \Nset_0, l\in I_j\}$, and let 
$(\phi_{j,l})_{(j,l)\in I }\subset L^2(\Omega)$ be an orthonormal system. 
Assume that for some $\sigma\in \Nset$ the system fulfills $\phi_{j,l} \in C^\sigma(\overline\Omega,\Rset)$.
For $f\in \mathcal(C^\sigma(\overline\Omega,\Rset))^\prime$ define 
the linear mapping 
\begin{subequations}\label{eq:waveletBesovGeneral}
\begin{equation}\label{eq:waveletBesovGeneral:transform}
	\mathcal W f := \lambda := (\lambda_{j,l})_{(j,l)\in I}\qquad \text{where} \qquad \lambda_{j,l}:=\int_{\Omega} f(x) \phi_{j,l}(x) \,\dd x.
\end{equation}
We assume that this system also generates Besov spaces in the following way:
For all $p,q\in[1,\infty]$ and $s\in \Rset$ with $\absval{s}<\sigma$ we get that $f\in \mathcal D^\prime(\Omega)$ belongs to $\Besovpq(\Omega)$ if and only if
\begin{align} \label{eq:waveletBesovGeneral:norm}
	\altnorm*{f}{\Besovpq}_{\mathcal{W}}:=\sbra*{\sum\nolimits_{j\in \Nset_0} 2^{jsq} 2^{jd(\frac{1}{2}-\frac{1}{p})q} \rbra*{\sum\nolimits_{l\in I_j} \absval{\lambda_{j,l}}^p }^{\frac{q}{p}} }^{\frac{1}{q}}
\end{align}
(with the usual modifications if $p=\infty$ or $q=\infty$) is finite. In this case \eqref{eq:waveletBesovGeneral:norm} is an equivalent norm on $\Besovpq[s]$. Furthermore this usually implies that 
\begin{align}\label{eq:waveletBesovGeneral:decompose}
	\text{the decomposition}\qquad f:=\mathcal W^* \lambda = \sum\nolimits_{(j,l)\in I}  \lambda_{j,l} \phi_{j,l} \qquad\text{ is unique} 
\end{align}
with unconditional convergence in $\mathcal D^\prime(\Omega)$ and local convergence in $\Besovpq[u]$ for all $u<s$ (even in $\Besovpq[s]$ if $p\neq \infty$ and $q\neq \infty$). 
\end{subequations}

\begin{example}\label{ex:waveletSystem} The properties above are satisfied  
in particular in the following cases:
	\begin{enumerate}
		\item Let $\Omega=\Td$, and let $(\tilde \phi_{j,l})_{(j,l)\in \tilde I }$ be either the $d$-dimensional Daubechies wavelet system of order $n\in \Nset$ or Meyer wavelet system for $\Rset^d$ where $\tilde I=\{(j,l):j\in\Nset_0,l\in \tilde I_j\}$ with $I_0:=\Zset^d$ and $I_j=\cbra{1,\ldots,2^d-1}\times\Zset^d$. 
	Define periodization $\phi_{j, l}(x) := \sum\nolimits_{z\in \Zset^d} \tilde\phi_{j, l} (x-z)$ for $x\in \Td$ of $\tilde\phi_{j,l}$ for $l\in I_j\subset \tilde I_j$ with $I_0:=\cbra{0}$ and $I_j:=\cbra{1,\ldots,2^d-1}\times\cbra{z\in \Zset^d \colon 0\leq z_i < 2^{j}}$. 
Then  $(\phi_{j,l})_{(j,l)\in I }$ is an orthonormal system in $L^2(\Td)$. 
Furthermore for fixed $\sigma\in \Nset$ the system fulfills \eqref{eq:waveletBesovGeneral} where in case of the Daubechies wavelet system we have to choose $n$ large enough (one has $\phi_{j,l}\in C^{1}$ for $n\geq 3$ and $\phi_{j,l}\in C^{2}$ for $n\geq 7$ while for large $n$ the asymptotic formula $\phi_{j,l}\in C^{\sigma}$ for $\sigma>0.2 n$ holds true, see \cite[Sec.~7]{Daubechies1992}).
		\item If $\Omega\subset \Rd$ is a bounded Lipschitz domain, there are different ways to define Besov spaces that differ mainly in how to treat boundary values.
	Let
	\begin{align*}
		B^s_{p,q}(\Omega):=
		\begin{cases}
			\{f\in \mathcal{D}^\prime(\Omega) : f=h|_\Omega, h\in B^s_{p,q}(\Rd)  \} &\text{if }s\le 0 \\
			\{f\in \mathcal{D}^\prime(\Omega) : f=h|_\Omega, h\in B^s_{p,q}(\Rd), \supp h\subset \overline{\Omega}\} &\text{if }s>0
		\end{cases}
	\end{align*}
	and 
	\begin{align*}
		\altnorm*{f}{B^{s}_{p,q}(\Omega)}:=\inf\altnorm*{h}{B^{s}_{p,q}(\Rd)},
	\end{align*}
	where the infimum is taken over all extensions $h$ as above. 
		For $B^s_{p,q}(\Omega)$ defined like this an explicit construction of an orthonormal system based on the Daubechies orthonormal wavelet system and a Whitney decomposition fulfilling \eqref{eq:waveletBesovGeneral} is carried out in \cite[Thm.~2.33 and 3.23]{Triebel2008}. For computationally more feasible constructions of orthogonal wavelets 
on the interval (or boxes via tensor product constructions) we refer to 
\cite{CDV:93} and with respect to boundary conditions to \cite{auscher:93}.
	\end{enumerate}
\end{example}
Motivated by the above example we will call $(\phi_{j,l})_{(j,l)\in I }$ a wavelet system and $\mathcal W$ the wavelet transform. In the following we will always assume that the wavelet system is such that \eqref{eq:waveletBesovGeneral:decompose} holds true and hence we have a norm on $\Besovpq(\Omega)$ given by \eqref{eq:waveletBesovGeneral:norm}.

\subsubsection{Fourier system}
In case that $\Omega=\Td$ an equivalent norm on $\Besovpq$ for $p\in (1,\infty)$, $q\in[1,\infty]$ and $s\in \Rset$ is given by the following:
Denote by $\chi_0(x)$ the characteristic function of the unit square in $\Rd$ and by the system $\rbra{\chi_j}_{j\in\Nset_0}$ the corresponding dyadic resolution of unity, that is
\begin{subequations}\label{eq:fourierBesovNorm}
\begin{equation}\label{eq:fourierBesovNorm:chi}
	\chi_0(x)=\begin{cases} 1 &\absval{x}_\infty \leq 1 \\ 0 & \text{else} \end{cases} \quad\text{and} \quad \chi_j(x):= \chi_0(2^{-j} x)- \chi_0(2^{-j+1} x)
\end{equation}
for $j\in \Nset$. Furthermore, we will denote by
\begin{equation*}
	\mathcal F f :=\hat f :=  \rbra*{\hat f(z)}_{z\in \Zset^d}\quad\text{where} \quad \hat f(z) := \int_{\Td} f(x) \overline{e_z(x)} \, \dd x\quad \text{with}\quad  e_z(x):=\ee^{2 \pi \ii x\cdot z}
\end{equation*}
the Fourier transform on $\Td$. For a function $f$ defined on  $\mathds T^d$ set
\begin{equation}\label{eq:fourierBesovNorm:innerSet}
	I_j:=\cbra{z\in \Zset^d \colon \chi_j(2 \pi z)=1}.
\end{equation}
A norm on $\Besovpq$ is then defined by
\begin{equation}\label{eq:fourierBesovNorm:norm}
	\altnorm*{f}{\Besovpq}:=\sbra*{\sum\nolimits_{j\in \Nset_0} 2^{jsq} \altnorm*{\sum\nolimits_{l\in I_j} \hat f(l) e_l }{L^p(\mathds T^d)}^q}^{\frac{1}{q}}
\end{equation}
\end{subequations}
(see \cite[Sec.~1.3]{Triebel2008}) with the usual modification for $q=\infty$. Note that while it is ``standard'' to introduce these spaces via a dyadic resolution of unity as we do above it is usually assumed that this resolution is also smooth (see e.g.\ \cite[Sec.~2.3]{Triebel2010}). However, this is not required for the range of the parameter $p\in (1,\infty)$ which we are considering here (see e.g.\ \cite[Sec.~2.5.4]{Triebel2010}).

\subsection{Subgradient smoothness}
If $\Rspace$ is not a Hilbert space, then the mapping $f\mapsto \fsub$ is no longer the identity mapping. While the continuity properties of this mapping have been studied for some time (see e.g.\ \cite{Chakrabarty2007} and references therein), much less is known
on the question whether additional smoothness of $f$ yields additional smoothness of $\fsub$. Although not stated explicitly in this form, the results in \cite{LT:08,RR:10} essentially show that for $B^{s_2}_{p_2,p_2}\subset B^{s_1}_{p_1,p_1}$  and $\fsub \in \partial \frac{1}{p_1} \altnorm{f}{B^{s_1}_{p_1,p_1}}^{p_1}_{\mathcal W}$ for a smooth enough wavelet system the relation
\begin{equation}\label{eq:subgradKnown}
	f\in B^{s_2}_{p_2,p_2} \quad \Longrightarrow \quad \fsub \in B^{-s_1}_{p_1^\prime,p_1^\prime} \cap B^{s_3}_{p_3,p_3}
\end{equation}
holds true where $p_3=\frac{p_2}{p_1-1}$ and $s_3=-s_1+(s_2-s_1)(p_1-1)$. The proof of \eqref{eq:subgradKnown} 
can be carried out along the lines of the proof of the next theorem, showing that it is even an equivalence result. For our new result, we restrict to the case that $p_1=p_2$ allowing for different fine indices $q_j$:

\begin{theorem}\label{thm:subdiffSmoothness}
	Let $p,q_1\in(1,\infty)$, $q_2\in[1,\infty]$, $s_1,s_2 \in \Rset$ and $r>0$. Set $s_3=-s_1+(s_2-s_1)(q_1-1)$ and $q_3=\frac{q_2}{q_1-1}$ and assume that the chosen wavelet system fulfills the assumptions in \S~\ref{sec:besovSpacesGeneral} with $\sigma>\max\cbra{|s_1|,|s_2|,|s_3|}$. 
	Let $\fsub \in \partial \frac{1}{r} \altnorm{f}{B^{s_1}_{p,q_1}}_{\mathcal W}^r$, then $f\in B^{s_1}_{p,q_1} \cap B^{s_2}_{p,q_2}$ if and only if $\fsub \in B^{-s_1}_{p^\prime,q_1^\prime} \cap B^{s_3}_{p^\prime,q_3}$ . Furthermore
	\begin{equation*}
		\altnorm*{\fsub}{B^{s_3}_{p^\prime,q_3}}_{\mathcal W}= \altnorm*{f}{B^{s_1}_{p,q_1}}_{\mathcal W}^{r-q_1} \altnorm*{f}{B^{s_2}_{p,q_2}}_{\mathcal W}^{q_1-1}.
	\end{equation*}
\end{theorem}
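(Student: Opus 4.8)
The plan is to reduce everything to the explicit wavelet coefficient description of the Besov norms in \eqref{eq:waveletBesovGeneral:norm} and to compute the subdifferential of $\frac{1}{r}\altnorm{f}{B^{s_1}_{p,q_1}}_{\mathcal W}^r$ coefficient block by coefficient block. First I would abbreviate the ``weighted'' coefficients $a_{j,l} := 2^{j s_1}2^{jd(1/2-1/p)}\lambda_{j,l}$ so that $\altnorm{f}{B^{s_1}_{p,q_1}}_{\mathcal W}^{q_1} = \sum_j \norm{a_{j,\cdot}}_{\ell^p}^{q_1}$, turning the penalty into a nested $\ell^{q_1}(\ell^p)$ norm to the power $r$. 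Since $p,q_1\in(1,\infty)$ this functional is Gateaux differentiable and $\partial\Rpen(f)$ is the singleton described after Example~\ref{ex:bregmanNormBound}, so the task is to differentiate $g(\lambda)=\frac{1}{r}\big(\sum_j \norm{a_{j,\cdot}}_{\ell^p}^{q_1}\big)^{r/q_1}$. I expect the gradient to come out, block-wise in $j$, as a scalar multiple of the standard duality map of $\ell^p$ inside $\ell^{q_1}$: the coefficient of $\phi_{j,l}$ in $\fsub$ should be proportional to $\norm{a_{j,\cdot}}_{\ell^p}^{q_1-p}\,\absval{a_{j,l}}^{p-1}\mathrm{sgn}(a_{j,l})$ times the overall factor $\altnorm{f}{B^{s_1}_{p,q_1}}_{\mathcal W}^{r-q_1}$ coming from the chain rule on the outer power.

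The key computation is then to read off which Besov space this gradient lives in. I would identify the weighted coefficients of $\fsub$ in the $B^{s}_{p',q}$ scale and check, by the same formula \eqref{eq:waveletBesovGeneral:norm} applied with the dual exponents, that the choice $s=-s_1$, $q=q_1'$ captures exactly the condition $f\in B^{s_1}_{p,q_1}$ (this is essentially the classical duality $B^{s_1}_{p,q_1}{}^* = B^{-s_1}_{p',q_1'}$ at the level of the duality map, and should give the ``only if'' and ``if'' in the first membership). For the second, finer membership I would feed the exponents $s_3=-s_1+(s_2-s_1)(q_1-1)$ and $q_3=q_2/(q_1-1)$ into \eqref{eq:waveletBesovGeneral:norm} and verify that the weighted $\ell^{q_3}(\ell^{p'})$ norm of $\fsub$ reorganizes, after substituting the gradient formula above, into $\altnorm{f}{B^{s_1}_{p,q_1}}_{\mathcal W}^{r-q_1}$ times a power of $\altnorm{f}{B^{s_2}_{p,q_2}}_{\mathcal W}$. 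The equivalence ``$f\in B^{s_2}_{p,q_2}\iff \fsub\in B^{s_3}_{p',q_3}$'' then falls out because each step is an exact algebraic rearrangement, not an inequality, which is precisely why the theorem can state an \emph{equality} of norms rather than a two-sided estimate.

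The main obstacle will be the bookkeeping of exponents in the finer space: one must verify that raising $\norm{a_{j,\cdot}}_{\ell^p}^{q_1-p}\absval{a_{j,l}}^{p-1}$ to the power $p'$ and summing over $l$ collapses, via $p'(p-1)=p$, back to a clean power of $\norm{a_{j,\cdot}}_{\ell^p}$, and that the subsequent $\ell^{q_3}$-summation in $j$ matches $\ell^{q_2}$ on the $a$-coefficients after accounting for the weight exponents $2^{j s_3}$ versus $2^{j s_2}$. Getting the dual smoothness weight right — tracking the $2^{jd(1/2-1/p')}$ factors that appear when one switches from the $p$-description of $f$ to the $p'$-description of $\fsub$ — is where sign and exponent errors are most likely, and I would check it by first confirming the clean base case $r=q_1$, $q_2=q_1$ (where $s_3=-s_1$, $q_3=q_1'$ and both memberships must coincide) before running the general substitution.
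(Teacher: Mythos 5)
Your proposal follows essentially the same route as the paper's proof: differentiate the wavelet-coefficient expression of the norm power to obtain the explicit duality-map formula for the coefficients of $\fsub$ (your formula matches the paper's $\mu_{j,l}$ after unpacking the weights), then verify by exact exponent bookkeeping, using $p^\prime(p-1)=p$, that the weighted $\ell^{q_3}(\ell^{p^\prime})$ norm of $\fsub$ collapses identically to $\altnorm*{f}{B^{s_1}_{p,q_1}}_{\mathcal W}^{r-q_1}\altnorm*{f}{B^{s_2}_{p,q_2}}_{\mathcal W}^{q_1-1}$. The only minor difference is the converse implication, which you let fall out of the exactness of the identity, whereas the paper invokes the duality $f\in\partial \tfrac{1}{r^\prime}\altnorm*{\fsub}{B^{-s_1}_{p^\prime,q_1^\prime}}^{r^\prime}$ to swap the roles of $f$ and $\fsub$; both are sound.
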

\begin{proof}
	We obtain $\fsub\in B^{-s_1}_{p^\prime,q_1^\prime}$ directly by the mapping properties of the subdifferential.
	
	Denote by $\lambda =\mathcal W f$ the wavelet decomposition of $f$. Then,  as for the given range of the parameters $p,q_1,r$ the norm is differentiable, one obtains that  $\fsub \in \partial \frac{1}{r} \altnorm{f}{B^{s_1}_{p,q_1}}^r$ if and only if $\fsub =\mathcal W^* \mu= \sum_{(j,l)\in I} \mu_{j,l} \phi_{j,l} $ where
	\begin{equation*}
		\mu_{j,l}=\altnorm*{f}{B^{s_1}_{p,q_1}}^{r-q_1} 2^{j s_1 q_1} 2^{jd(\frac{1}{2}-\frac{1}{p}) q_1} \sbra*{\sum\nolimits_{m\in I_j} \absval*{\lambda_{j,m}}^p}^{\frac{1}{p}(q_1-p)} \frac{\lambda_{j,l}}{\absval*{\lambda_{j,l}}^{2-p}}. 
	\end{equation*}
	For $q_3\neq \infty$ (the case $q_3=\infty$ follows along the same lines) we get
	\begin{align*}
		&\, \altnorm*{f^*}{B^{s_3}_{p^\prime,q_3}}^{q_3}/\altnorm*{f}{B^{s_1}_{p,q_1}}^{q_3(r-q_1)}\\
		=&\,  \sum\nolimits_{j\in \Nset_0} 2^{j(s_3+s_1 q_1)q_3} 2^{jd(\frac{1}{2}-\frac{1}{p^\prime}+q_1(\frac{1}{2}-\frac{1}{p}))q_3} \sbra*{\sum\nolimits_{l \in I_j} \absval*{\lambda_{j,l}}^p}^{\frac{q_3}{p}(q_1-p)+\frac{q_3}{p^\prime}}\\
		=&\,  \sum\nolimits_{j\in \Nset_0} 2^{j s_2 q_2} 2^{jd(\frac{1}{2}-\frac{1}{p})q_2} \sbra*{\sum\nolimits_{l\in I_j} \absval*{\lambda_{j,l}}^p}^{\frac{q_2}{p}}\\
		=&\, \altnorm*{f}{B^{s_2}_{p,q_2}}^{q_2},
	\end{align*}
	hence taking the $q_3$-root proves the claim.
	
	For the ``only if'' part note that by duality $\fsub \in \partial \frac{1}{r}\altnorm{f}{\Besovpq}^r$ implies that $f \in \partial \frac{1}{r^\prime}\altnorm{\fsub}{\Besovpqdual[-s]}^{r^\prime}$, hence we also have the implication $f^* \in B^{-s_1}_{p^\prime,q_1^\prime} \cap B^{s_3}_{p^\prime,q_3}$ implies $f\in B^{s_1}_{p,q_1} \cap B^{s_2}_{p,q_2}$.
\end{proof}

The interesting case of the theorem above is if either $s_1<s_2$ or if $s_1=s_2$ and $q_2< q_1$ (and hence $B^{s_1}_{p,q_1} \cap B^{s_2}_{p,q_2}=B^{s_2}_{p,q_2}$ in both cases) since in these cases $ B^{s_3}_{p^\prime,q_3}$ is a proper subspace of $B^{-s_1}_{p^\prime,q_1^\prime}$ and not the other way around. Otherwise -- i.e.\ if $B^{s_1}_{p,q_1} \cap B^{s_2}_{p,q_2}=B^{s_1}_{p,q_1}$ -- the explicit expression of the norm might still be useful. We would like to highlight one special cases of the above theorem. If $q_2=\infty$, that is $f$ is in the largest space with smoothness $s$, then we also obtain $q_3=\infty$. This is interesting because Besov spaces $B^s_{2,\infty}$ are known to be maximal sets for $L^2$-regularization for certain problems (see \cite{HW:17}).

From now on we will always assume sufficient smoothness of the wavelet system in the sense of the previous theorem without further mentioning.

\subsection{Bernstein and Jackson-typeinequalities}
\begin{assumption}\label{ass:projectionBesov}
	Let $(P_j)_{j\in \Nset_0}\colon \mathcal D^\prime(\Omega)\rightarrow \mathcal D^\prime(\Omega)$ be a familiy of linear operators such that for all $p\in(1,\infty)$, $q$, $\tilde q \in [1,\infty]$ and $s$, $t\in \Rset$ with $\max\cbra{\absval s,\absval t}\leq \sigma$ the norm bounds 
	\begin{align*}
		&\text{if }t>s:&\altnorm*{P_j f}{B^t_{p,\tilde q}} &\leq c_1 2^{j (t-s)} \altnorm*{f}{B^s_{p,q}}\\
		&\text{if }t<s:&\altnorm*{(I-P_j) f}{B^t_{p,\tilde q}} &\leq c_2 2^{j (t-s)} \altnorm*{f}{B^s_{p,q}}
	\end{align*}
	with constant $c_1=c_1(p,q,\tilde q,s,t)>0$ and $c_2=c_1(p,q,\tilde q,s,t)>0$  called Bernstein and Jackson inequality, respectively, hold true.
\end{assumption}
\begin{example} The following are possible choices of the operators $(P_j)_{j\in \Nset_0}$.
	\begin{enumerate}
		\item Let $\Omega$ be a Lipschitz domain or the torus as in Section \ref{sec:besovSpacesGeneral} and $\mathcal W$ the wavelet transform defined in \eqref{eq:waveletBesovGeneral:transform}. Set for $j\in \Nset_0$
		\begin{subequations}
		\begin{equation}\label{eq:waveletProjection}
		\begin{aligned}
			P_j f &:= \mathcal W^* Q_j \mathcal W f\\
			\text{where}\qquad \rbra*{Q_j \lambda}_{k^\prime, l^\prime}&:=\begin{cases} \lambda_{k^\prime,l^\prime} & k^\prime\leq j\\ 0 &\text{else}\end{cases}\quad \text{for all } (k^\prime,l^\prime)\in \Nset_0\times I_k.
		\end{aligned}
		\end{equation}
		Due to \eqref{eq:waveletBesovGeneral:norm} this immediately implies a Bernstein and Jackson inequality of the desired form:
		\begin{enumerate}
			\item \emph{Bernstein inequality:} For $t>s$, $\tilde q \in [1,\infty]$ 
			and $f\in \Besovpq[s]$ we get 
			\begin{align*}
				\altnorm*{P_j f}{B^t_{p,\tilde q}}^{\tilde q}&=\sum\nolimits_{k\leq j} 2^{k (t-s) \tilde q} \rbra*{2^{k s} 2^{kd(\frac{1}{2}-\frac{1}{p})} \rbra*{\sum\nolimits_{l\in I_k} \absval{\lambda_{k,l}}^p }^{\frac{1}{p}}}^{\tilde q} \\
				&\leq \sum\nolimits_{k\leq j} 2^{k (t-s) \tilde q} \altnorm*{f}{B^s_{p,\infty}}^{\tilde q} \leq c 2^{j (t-s) \tilde q} \altnorm*{f}{B^s_{p,q}}^{\tilde q}
			\end{align*}
			for some constant $c$ depending on $t$, $s$ and $\tilde q$ only.
			\item \emph{Jackson inequality:} For $t<s$, $\tilde q \in [1,\infty]$ 
			and $f\in \Besovpq[s]$ we obtain
			\begin{align*}
				\altnorm*{(I-P_j) f}{B^t_{p,\tilde q}}^{\tilde q}&=\sum\nolimits_{k > j} 2^{k (t-s) \tilde q} \rbra*{2^{k s} 2^{kd(\frac{1}{2}-\frac{1}{p})} \rbra*{\sum\nolimits_{l\in I_k} \absval{\lambda_{k,l}}^p }^{\frac{1}{p}}}^{\tilde q}\\
				&\leq \sum\nolimits_{k> j} 2^{k (t-s) \tilde q} \altnorm*{f}{B^s_{p,\infty}}^{\tilde q} \leq c 2^{j (t-s) \tilde q} \altnorm*{f}{B^s_{p,q}}^{\tilde q}
			\end{align*}
			where the constant $c$ depends again on $t$, $s$ and $\tilde q$ only.
		\end{enumerate}
		\item Let $\Omega=\Td$ and let the norm of $\Besovpq$ be given by \eqref{eq:fourierBesovNorm}. Then set
		\begin{equation}\label{eq:fourierProjection}
			P_j:=\mathcal F^* \rbra*{\sum\nolimits_{k\leq j}\chi_k(2\pi \cdot)} \mathcal F
		\end{equation}
		\end{subequations}
		for $j\in \Nset_0$ to get the following:
		\begin{enumerate}
			\item \emph{Bernstein inequality:} For $t>s$, $\tilde q \in [1,\infty]$ and
			$f\in \Besovpq[s]$ we get 
			\begin{align*}
				\altnorm*{P_j f}{B^t_{p,\tilde q}}^{\tilde q}&=\sum\nolimits_{k\leq j} 2^{k (t-s) \tilde q} \rbra*{2^{k s \tilde q} \altnorm*{\sum\nolimits_{l\in I_k} \hat f(l) e_l}{L^p(\mathds T^d)}}^{\tilde q} \\
				&\leq \sum\nolimits_{k\leq j} 2^{k (t-s) \tilde q} \altnorm*{f}{B^s_{p,\infty}}^{\tilde q} \leq c 2^{j (t-s) \tilde q} \altnorm*{f}{B^s_{p,q}}^{\tilde q}
			\end{align*}
			for some constant $c$ depending on $t$, $s$ and $\tilde q$ only.
			\item \emph{Jackson inequality:} For $t<s$, $\tilde q \in [1,\infty]$ and
			$f\in \Besovpq[s]$ we obtain
			\begin{align*}
				\altnorm*{(I-P_j) f}{B^t_{p,\tilde q}}^{\tilde q}&=\sum\nolimits_{k > j} 2^{k (t-s) \tilde q} \rbra*{2^{k s \tilde q} \altnorm*{\sum\nolimits_{l\in I_k} \hat f(l) e_l}{L^p(\mathds T^d)}}^{\tilde q}\\
				&\leq \sum\nolimits_{k> j} 2^{k (t-s) \tilde q} \altnorm*{f}{B^s_{p,\infty}}^{\tilde q} \leq c 2^{j (t-s) \tilde q} \altnorm*{f}{B^s_{p,q}}^{\tilde q}
			\end{align*}
			where the constant $c$ depends again on $t$, $s$ and $\tilde q$ only.
		\end{enumerate}
	\end{enumerate}
\end{example}
Further possibilities include projections onto spline or finite element subspaces.

\begin{corollary}\label{cor:projectionSubgradient}
	Let $1<p,q<\infty$, $r=\max\cbra{2,p,q}$ and $s>0$. Let $\ftrue \in B^s_{p,\infty}$ with $\altnorm{\ftrue}{B^s_{p,\infty}}_{\mathcal W}\leq \varrho$ for some $\varrho>0$ and $\fsub \in\partial \frac{1}{r}\altnorm{\ftrue}{B^0_{p,q}}_{\mathcal W}^r$. Then $\altnorm{\fsub}{B_{p^\prime,\infty}^{s(q-1)}}\leq c \varrho^{r-1}$  and if $(P_j)_{j\in \Nset}$ is choosen according to Assumption \ref{ass:projectionBesov} and $a>s(q-1)$ there exists some constant $c>0$ such that
	\begin{equation*}
		\altnorm*{P_j\fsub}{\Besovpqdual[a]} \leq c \varrho^{r-1} 2^{j(a-s(q-1))}\quad\text{and}\quad\altnorm*{(I-P_j)\fsub}{\Besovpqdual[0]} \leq c \varrho^{r-1} 2^{-js(q-1)} .
	\end{equation*}
\end{corollary}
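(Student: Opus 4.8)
The plan is to read off everything from the subgradient smoothness result Theorem~\ref{thm:subdiffSmoothness} together with the Jackson and Bernstein inequalities of Assumption~\ref{ass:projectionBesov}. First I would apply Theorem~\ref{thm:subdiffSmoothness} with the parameter choice $s_1=0$, $q_1=q$, $s_2=s$, $q_2=\infty$, which matches the hypothesis since $\fsub\in\partial\frac1r\altnorm{\ftrue}{B^0_{p,q}}_{\mathcal W}^r$; with these choices the derived exponents become $s_3=-s_1+(s_2-s_1)(q_1-1)=s(q-1)$ and $q_3=\frac{q_2}{q_1-1}=\infty$. Before invoking the theorem I must verify that $\ftrue$ lies in the intersection $B^0_{p,q}\cap B^s_{p,\infty}$ (which is also what makes the subgradient well defined): this holds because $s>0$ gives the embedding $B^s_{p,\infty}\hookrightarrow B^0_{p,q}$, so that $\altnorm*{\ftrue}{B^0_{p,q}}\leq c\altnorm*{\ftrue}{B^s_{p,\infty}}\leq c\varrho$. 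The ``if'' direction of the theorem then yields both $\fsub\in B^{s(q-1)}_{p^\prime,\infty}$ and the explicit norm identity
\[
	\altnorm*{\fsub}{B^{s(q-1)}_{p^\prime,\infty}}_{\mathcal W}=\altnorm*{\ftrue}{B^0_{p,q}}_{\mathcal W}^{r-q}\,\altnorm*{\ftrue}{B^s_{p,\infty}}_{\mathcal W}^{q-1}.
\]

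To get the first claimed bound I would estimate the two factors separately. Since $r=\max\cbra{2,p,q}\geq q$, the exponent $r-q$ is nonnegative, so the embedding bound above gives $\altnorm*{\ftrue}{B^0_{p,q}}_{\mathcal W}^{r-q}\leq (c\varrho)^{r-q}$, while trivially $\altnorm*{\ftrue}{B^s_{p,\infty}}_{\mathcal W}^{q-1}\leq\varrho^{q-1}$. Multiplying and collecting exponents yields $\altnorm*{\fsub}{B^{s(q-1)}_{p^\prime,\infty}}\leq c\,\varrho^{(r-q)+(q-1)}=c\,\varrho^{r-1}$, which is exactly the first assertion; passing from the wavelet norm $\altnorm{\cdot}{\cdot}_{\mathcal W}$ to any admissible equivalent Besov norm only changes the constant.

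For the two projection estimates I would feed this smoothness of $\fsub$ into Assumption~\ref{ass:projectionBesov}, applied with $p^\prime\in(1,\infty)$ in place of $p$, source smoothness $s(q-1)>0$ with fine index $\infty$, and target fine index $q^\prime$. The Bernstein inequality with target smoothness $a>s(q-1)$ then gives $\altnorm*{P_j\fsub}{B^a_{p^\prime,q^\prime}}\leq c_1 2^{j(a-s(q-1))}\altnorm*{\fsub}{B^{s(q-1)}_{p^\prime,\infty}}\leq c\,\varrho^{r-1}2^{j(a-s(q-1))}$, while the Jackson inequality with target smoothness $0<s(q-1)$ gives $\altnorm*{(I-P_j)\fsub}{B^0_{p^\prime,q^\prime}}\leq c_2 2^{-js(q-1)}\altnorm*{\fsub}{B^{s(q-1)}_{p^\prime,\infty}}\leq c\,\varrho^{r-1}2^{-js(q-1)}$. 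These are precisely the two remaining bounds.

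The argument is essentially bookkeeping, so there is no genuine obstacle; the points demanding care are (i) checking that the substitution into Theorem~\ref{thm:subdiffSmoothness} is admissible---in particular that $q_2=\infty$ is permitted and that $\sigma$ exceeds $\max\cbra{0,s,s(q-1)}$, which we assume throughout---and (ii) using $r-q\geq 0$ so that the embedding may be raised to a nonnegative power. The structurally important output is that the subgradient gains smoothness exactly $s(q-1)$, a value dictated entirely by the exponent $s_3$ of the theorem, which shrinks as $q\to 1$ and grows with $q$.
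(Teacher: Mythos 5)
Your proof is correct and follows essentially the same route as the paper's (very terse) proof: invoke Theorem~\ref{thm:subdiffSmoothness} with $s_1=0$, $q_1=q$, $s_2=s$, $q_2=\infty$ to obtain $\fsub\in B^{s(q-1)}_{p^\prime,\infty}$ with a norm bound, then feed that bound into the Bernstein and Jackson inequalities of Assumption~\ref{ass:projectionBesov}. The details you add --- the embedding $B^s_{p,\infty}\hookrightarrow B^0_{p,q}$ for $s>0$ and the observation that $r-q\geq 0$ so the norm identity yields $c\varrho^{r-1}$ --- are exactly what the paper leaves implicit, and they are handled correctly.
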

\begin{proof}
	By Theorem \ref{thm:subdiffSmoothness} we get $\fsub \in B_{p^\prime,\infty}^{s(q-1)}$ together with a norm bound. Inserting this bound into Assumption \ref{ass:projectionBesov} where $P_j$ and $I-P_j$ are applied to $\fsub$ gives the desired inequalities.
\end{proof}
We will see in Section \ref{sec:finite_smoothing} that for a wide class of applications these inequalities are enough in order to verify variational source conditions.

\subsection{Deterministic lower bounds}\label{sec:optimal}
In order to see whether the convergence rates implied by variational source conditions are of optimal order we need to find a lower bound on these rates. Such a bound is provided 
by the modulus of continuity, and this lower bound is known to be sharp in 
Hilbert spaces (see \cite{vainikko:87}):
\begin{definition}
	Let $F \colon \Xspace \rightarrow \Yspace$ be continuous and injective, and let $\mathcal K \subset \Xspace$ be   compact. Then the \emph{modulus of continuity} $\omega(\delta, \mathcal K)$ of $(F|_{\mathcal K})^{-1}$ is defined by
	\begin{equation*}
		\omega(\delta, \mathcal K):= \sup \cbra*{\altnorm*{f_1-f_2}{\Xspace} \colon f_1,f_2 \in \mathcal K,
		\altnorm*{F(f_1)-F(f_2)}{\Yspace}\leq \delta }.
	\end{equation*}
\end{definition}
\begin{lemma}[{cf.~\cite[Rem.~3.12]{EHN:96}}]\label{lem:lowerContinuity}
	The worst case error of any (linear or nonlinear) reconstruction method $R\colon \Yspace \rightarrow \Xspace$ on $\mathcal K$ satisfies the lower bound
	\begin{equation}\label{eq:lowerContinuity}
		\sup\cbra*{\altnorm*{f-R(\gobs)}{\Xspace} \colon f\in \mathcal K, \gobs\in\Yspace,
		\altnorm*{F(f)-\gobs}{\Yspace} \leq \delta} \geq \frac{1}{2}\omega(2\delta, \mathcal K).
	\end{equation}
\end{lemma}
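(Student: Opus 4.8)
The plan is to establish the lower bound \eqref{eq:lowerContinuity} by a two-point argument that is standard in the Hilbert-space theory but works verbatim here since we only use the definitions of $\omega$ and of the worst-case error. First I would unwind the supremum in $\omega(2\delta,\mathcal K)$: by definition there exist, for any $\eta>0$, two elements $f_1,f_2\in\mathcal K$ with $\altnorm*{F(f_1)-F(f_2)}{\Yspace}\leq 2\delta$ and $\altnorm*{f_1-f_2}{\Xspace}\geq \omega(2\delta,\mathcal K)-\eta$. These two candidates will serve as the indistinguishable solutions that any reconstruction method must confuse.

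The key step is to choose the common data. I would set $\gobs:=\tfrac{1}{2}\bigl(F(f_1)+F(f_2)\bigr)$, the midpoint in $\Yspace$, so that by the triangle inequality
\begin{equation*}
	\altnorm*{F(f_i)-\gobs}{\Yspace}=\tfrac{1}{2}\altnorm*{F(f_1)-F(f_2)}{\Yspace}\leq\delta,\qquad i=1,2.
\end{equation*}
Hence both pairs $(f_1,\gobs)$ and $(f_2,\gobs)$ are admissible in the supremum on the left-hand side of \eqref{eq:lowerContinuity}. Since $R$ produces a single reconstruction $R(\gobs)\in\Xspace$, the worst-case error $E$ over $\mathcal K$ dominates $\max_{i}\altnorm*{f_i-R(\gobs)}{\Xspace}$, and by the triangle inequality in $\Xspace$,
\begin{equation*}
	2E\geq \altnorm*{f_1-R(\gobs)}{\Xspace}+\altnorm*{f_2-R(\gobs)}{\Xspace}\geq\altnorm*{f_1-f_2}{\Xspace}\geq \omega(2\delta,\mathcal K)-\eta.
\end{equation*}
Letting $\eta\to 0$ gives $E\geq\tfrac{1}{2}\omega(2\delta,\mathcal K)$, which is the claim.

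I do not expect a genuine obstacle here, since injectivity and continuity of $F$ are not actually needed for the inequality itself (they only matter for $\omega$ to be a meaningful, finite quantity), and no Hilbert structure is used — only the metric/triangle inequalities in $\Xspace$ and $\Yspace$. The one point requiring minor care is the supremum: if the supremum defining $\omega(2\delta,\mathcal K)$ is attained one may take $\eta=0$ directly, but otherwise the $\eta$-approximation and subsequent limit are essential, so I would phrase the argument with a general $\eta>0$ and pass to the limit at the end. The midpoint construction is the whole idea; everything else is two applications of the triangle inequality.
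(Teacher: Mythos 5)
Your proof is correct and follows essentially the same route as the paper: the midpoint data $\gobs=\tfrac12(F(f_1)+F(f_2))$, the bound of the worst-case error by the maximum (hence the average) of the two individual errors, and the triangle inequality in $\Xspace$. The only cosmetic difference is that you extract near-maximizers with an $\eta$-margin and pass to the limit, whereas the paper proves the inequality for arbitrary admissible $f_1,f_2$ and takes the supremum at the end — these are equivalent formulations of the same argument.
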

\begin{proof}
	Consider $f_1,f_2\in \mathcal K$ such that $\altnorm*{F(f_1)-F(f_2)}{\Yspace}\leq 2\delta$. Then $\gobs:=\frac{1}{2}(F(f_1)+F(f_2))$ satisfies $\altnorm*{F(f_j)-\gobs}{\Yspace}\leq \delta$. Hence, the left hand side $\Delta_R(\delta,\mathcal K)$ of \eqref{eq:lowerContinuity} fulfills
	\begin{align*}
		\Delta_R(\delta,\mathcal K) &\geq \max_{j\in \{1,2\}}\altnorm*{f_j\!-\!R(\gobs)}{\Xspace} \geq \frac{1}{2}\sum\nolimits_{j=1}^2\altnorm*{f_j\!-\!R(\gobs)}{\Xspace} \geq \frac{1}{2}\altnorm*{f_1-f_2}{\Xspace}.
	\end{align*}
	Taking the supremum over all $f_1,f_2$ with the given properties shows \eqref{eq:lowerContinuity}.
\end{proof}
We will prove the following adaptation of \cite[Prop.~4.6]{Daubechies2004}, which estimates the decay of the modulus of continuity if the data has a structure that is compatible with the structure of the Besov space. For this purpose let $(\tilde \phi_{j,l})_{(j,l)\in \tilde I }\subset L^2(\Omega)$ for $\tilde I=\{(j,l):j\in\Nset_0, l\in \tilde I_j\}$ with some countable index sets $(\tilde I_j)_{j\in \Nset_0}$ be an orthonormal system, which might be different from $(\phi_{j,l})_{(j,l)\in  I }$. Further assume that this system defines an equivalent norm on Besov spaces by
\begin{equation}\label{eq:normBesovLower}
\begin{aligned}
	\altnorm*{f}{\Besovpq}&:=\sbra*{\sum\nolimits_{j\in \Nset_0} 2^{jsq}  \altnorm*{\sum\nolimits_{l\in I_j}\tilde\lambda_{j,l}(f)  \tilde \phi_{j,l}}{L^p(\Omega)}^q }^{\frac{1}{q}}\\
	\text{with}\qquad \tilde\lambda_{j,l}(f)&:=\int_{\Omega} f(x) \tilde \phi_{j,l}(x) \,\dd x.
\end{aligned}
\end{equation}
for all $p\in(1,\infty)$, $q\in[1,\infty]$ and all $\absval s \leq \tilde \sigma$.
\begin{proposition}\label{prop:besovLowerBasic}
	Let $1<p<\infty$, $1 \leq q, \tilde q \leq \infty$, $s>0$. In the setting of Lemma \ref{lem:lowerContinuity} set $\Xspace:=\Besovpq[0]$, $\mathcal K :=\cbra{ f\in B^s_{ p,\tilde q} \colon \altnorm{f}{B^s_{ p,\tilde q}}\leq \varrho}$, $\Yspace:=L^2$. Assume that the following holds true:
	\begin{enumerate}
		\item $F(0)=0$.
		\item There exists $(b_{j,l})_{(j,l)\in \tilde I}\subset (0,\infty)$ such that
		\begin{equation*}
			\altnorm{F(f_1)-F(f_2)}{L^2}^2\leq \sum\nolimits_{(j,l)\in \tilde I}  b_{j,l} \absval*{\tilde\lambda_{j,l}(f_1)- \tilde\lambda_{j,l}(f_2)}^2.
		\end{equation*}
		\item\label{prop:besovLowerBasic:sequence} There exists $(f_j)_{j\in \Nset_0}\subset \Xspace$ such that $\tilde \lambda_{k,l}(f_j)=0$ for all $k\neq j$ and for two constants $c_1,c_2>0$ the estimates
		\begin{equation*}
			\frac{1}{c_1}\leq \altnorm*{f_j}{L^2} \leq c_1 \quad \text{and} \quad \frac{1}{c_2}\leq \altnorm*{f_j}{L^p} \leq c_2
		\end{equation*}
		hold true for all $j\in \Nset_0$.
	\end{enumerate}
	Then the modulus of continuity satisfies
	\begin{equation*}
		\omega(\delta,\mathcal K) \geq \frac{1}{c_2} \sup\nolimits_{j\in \Nset_0}  \cbra*{\min \cbra*{\frac{1}{c_1}\rbra*{ \max\nolimits_{l\in \tilde I_j} b_{j,l}}^{-\frac{1}{2}} \delta, \frac{1}{c_2} 2^{-js} \varrho }}.
	\end{equation*}
\end{proposition}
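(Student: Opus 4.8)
The plan is to test the modulus of continuity directly against the single-scale ``bumps'' $f_j$ furnished by item~(\ref{prop:besovLowerBasic:sequence}), pairing each one with the zero function. Since $\altnorm{0}{B^s_{p,\tilde q}}=0\leq\varrho$ we have $0\in\mathcal K$, and because $F(0)=0$ the admissibility condition appearing in $\omega(\delta,\mathcal K)$ reduces to a bound on $\altnorm{F(tf_j)}{L^2}$. So for a fixed scale $j\in\Nset_0$ and a scalar $t$ still to be chosen, I would set $f_1=tf_j$ and $f_2=0$ and then verify the two constraints defining the supremum in $\omega(\delta,\mathcal K)$, namely membership $tf_j\in\mathcal K$ and the data bound $\altnorm{F(tf_j)}{L^2}\leq\delta$.

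The point that makes every norm computable in closed form is that $\tilde\lambda_{k,l}(f_j)=0$ for $k\neq j$, so only the scale-$j$ term of \eqref{eq:normBesovLower} survives. This gives $\altnorm{tf_j}{B^s_{p,\tilde q}}=\absval{t}\,2^{js}\altnorm{f_j}{L^p}$ and $\altnorm{tf_j}{B^0_{p,q}}=\absval{t}\,\altnorm{f_j}{L^p}$; combined with $\altnorm{f_j}{L^p}\leq c_2$, the first identity shows that $\absval{t}\leq\varrho\,2^{-js}/c_2$ already guarantees $tf_j\in\mathcal K$. For the data term I would feed the coefficients of $tf_j$ into the second hypothesis, keep only the scale-$j$ summands, estimate $b_{j,l}\leq\max_{l\in\tilde I_j}b_{j,l}$, and recognize $\sum_{l\in\tilde I_j}\absval{\tilde\lambda_{j,l}(f_j)}^2=\altnorm{f_j}{L^2}^2$ by Parseval for the orthonormal system $(\tilde\phi_{j,l})$. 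Using $\altnorm{f_j}{L^2}\leq c_1$ this yields $\altnorm{F(tf_j)}{L^2}\leq\absval{t}\,c_1(\max_{l}b_{j,l})^{1/2}$, so $\absval{t}\leq\delta\,c_1^{-1}(\max_{l}b_{j,l})^{-1/2}$ suffices for the data constraint.

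Choosing $\absval{t}$ equal to the minimum of these two admissible thresholds makes $f_1=tf_j$ a legitimate competitor in $\omega(\delta,\mathcal K)$, and the target distance is bounded below using $\altnorm{f_j}{L^p}\geq 1/c_2$:
\begin{equation*}
\omega(\delta,\mathcal K)\geq\altnorm*{tf_j}{B^0_{p,q}}=\absval{t}\,\altnorm{f_j}{L^p}\geq\frac{\absval{t}}{c_2}=\frac{1}{c_2}\min\cbra*{\tfrac{1}{c_2}2^{-js}\varrho,\ \tfrac{1}{c_1}\rbra*{\max\nolimits_{l\in\tilde I_j}b_{j,l}}^{-1/2}\delta}.
\end{equation*}
Since $j\in\Nset_0$ was arbitrary, taking the supremum over $j$ produces exactly the asserted bound.

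The argument is essentially careful bookkeeping rather than hard analysis, and I expect the only real obstacle to be keeping the four estimates of item~(\ref{prop:besovLowerBasic:sequence}) pointed in the correct direction. One needs the \emph{upper} bounds $\altnorm{f_j}{L^p}\leq c_2$ and $\altnorm{f_j}{L^2}\leq c_1$ to certify that the chosen $t$ still respects the $\mathcal K$- and data-constraints, whereas the \emph{lower} bound $\altnorm{f_j}{L^p}\geq 1/c_2$ is precisely what forces the reconstruction error to be large; interchanging these directions would invalidate the estimate.
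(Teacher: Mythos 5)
Your proposal is correct and follows essentially the same route as the paper's own proof: the paper also tests $\omega(\delta,\mathcal K)$ with the pair $(w_jf_j,0)$, where $w_j$ is exactly your minimum of the two thresholds, and verifies the same three bounds ($\mathcal K$-membership via the single surviving scale-$j$ block, the data constraint via the coefficient hypothesis, and the lower bound $\altnorm{w_jf_j}{B^0_{p,q}}\geq w_j/c_2$) before taking the supremum over $j$. Your write-up merely spells out the "straightforward computations" the paper leaves implicit, including the observation that only Bessel's inequality is needed for the data bound.
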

\begin{proof}
	For $j\in \Nset_0$ set
	\begin{equation*}
		w_j:=\min \cbra*{\frac{1}{c_1}\rbra*{ \max\nolimits_{l\in \tilde I_j} b_{j,l}}^{-\frac{1}{2}} \delta, \frac{1}{c_2} 2^{-js} \varrho }.
	\end{equation*}
Straightforward computations show that 
	\begin{align*}
		\altnorm*{F(w_j f_{j})-F(0)}{L^2}\leq \delta, \quad  \altnorm*{w_j f_{j}}{B^s_{p,\tilde q}}\leq \varrho,  \quad\text{and}\quad \altnorm*{w_j f_{j}}{\Besovpq[0]} \geq \frac{w_j}{c_2}.
	\end{align*}
	In particular, $w_j f_{j}\in \mathcal{K}$ for all $j\in \Nset_0$, and obviously also
	$0\in \mathcal K$. This yields 
	\begin{equation*}
		\omega(\delta, \mathcal K)\geq \sup_{j\in \Nset_0} \altnorm*{w_j f_{j}}{\Besovpq[0]} 
		\geq \frac{1}{c_2} \sup_{j\in \Nset_0} \cbra*{\min  \cbra*{\frac{1}{c_1}\rbra*{ \max\nolimits_{l\in \tilde I_j} b_{j,l}}^{-\frac{1}{2}} \delta, \frac{1}{c_2} 2^{-js} \varrho }}.
	\end{equation*}
\end{proof}
Note that the norm defined by the Fourier expansion in \eqref{eq:fourierBesovNorm} is of the form \eqref{eq:normBesovLower} while setting $(\tilde \phi_{j,l})_{(j,l)\in \tilde I }:=( \phi_{j,l})_{(j,l)\in I }$ with $( \phi_{j,l})_{(j,l)\in I }$ as in \eqref{eq:waveletBesovGeneral} leads to an equivalent norm, since there exists a constant $c>0$ independent of $j$ such that 
\begin{equation}\label{eq:waveletLevelNormEquivalence}
	\frac{1}{c} 2^{jd(\frac{1}{2}-\frac{1}{p})}\altnorm*{\lambda_{j,\cdot}}{\ell^p(I_j)}\leq \altnorm*{\sum\nolimits_{l\in I_j} \lambda_{j,l} \phi_{j,l}}{L^p(\Omega)} \leq c 2^{jd(\frac{1}{2}-\frac{1}{p})}\altnorm*{\lambda_{j,\cdot}}{\ell^p(I_j)}
\end{equation}
see \cite[Thm.~3.9.2]{Cohen2003}.
\begin{example}\label{ex:normBesovLower}
	The assumption \ref{prop:besovLowerBasic:sequence} of Proposition \ref{prop:besovLowerBasic} holds true for both types of Besov norms considered in this paper:
	\begin{enumerate}
		\item Assume that there is a constant $c>0$ such that
		\begin{equation*}
			c2^{jd}\leq \absval*{I_j}
		\end{equation*}
		(which is fulfilled for the cases presented in Example \ref{ex:waveletSystem}). Then, choosing $\Gamma_j\subset I_j$ with $\absval{\Gamma_j} \sim 2^{jd}$ we get by \eqref{eq:waveletLevelNormEquivalence} that for $f_j:=\sum_{l\in \Gamma_j} 2^{-j\frac{d}{2}} \phi_{j,l}$ \ref{prop:besovLowerBasic:sequence} holds true for some constants $c_1, c_2 >0$.
		\item\label{ex:normBesovLower:fourier} In the Fourier setting \eqref{eq:fourierBesovNorm} we get $\altnorm{e_z}{L^p}=1$ for all $z\in \Zset^d$ and $p\in [1,\infty]$. Hence we can set  $f_j=e_l$ for some $l\in I_j$ in order to get that \ref{prop:besovLowerBasic:sequence} is fulfilled with $c_1=c_2=1$.
	\end{enumerate}
\end{example}

\section{Finitely smoothing operators}\label{sec:finite_smoothing}
In this section we assume that the forward operator $F_a$ is $a$-times 
smoothing for some $a>0$. More precisely, we assume that 
\begin{subequations}\label{eqs:mappingFa}
\begin{align}
\label{eq:mapping_params}
& p\in(1,2], \; q\in(1,\infty),\; a>\frac{d}{p}-\frac{d}{2}\\
\label{eq:smoothingNorm2}
& \altnorm*{F_a(f_1)-F_a(f_2)}{B^a_{p,q}} \leq L \altnorm*{f_1-f_2}{B^{0}_{p,q}}\\
\label{eq:smoothingNormF}
&\altnorm*{F_a(f_1)-F_a(f_2)}{L^2}\leq L\altnorm*{f_1-f_2}{B^{-a}_{2,2}}\\
\label{eq:smoothingNormFinv}
&\altnorm*{f_1-f_2}{B^{-a}_{2,2}} \leq L\altnorm*{F_a(f_1)-F_a(f_2)}{L^2}
\end{align}
\end{subequations}
for some $L>0$ and all $f_1,f_2\in \Dset\subset B^0_{p,q}$. 

\begin{example}
	Eqs.~\eqref{eqs:mappingFa} are e.g.\ satisfied for the following examples:
	\begin{itemize}
		\item Set $F_a=(I-\Delta)^{-a/2}$ (or more generally, let $F_a$ be a injective elliptic pseudodifferential operators of order $-a$), then $F_a:\Besovpq\to \Besovpq[s+a]$ is bounded and boundedly invertible for all $s\in\Rset$.
		\item In \cite[Lemma 2.9]{HM:19} it was shown that \eqref{eq:smoothingNormF} and \eqref{eq:smoothingNormFinv} follow from the same equations with $F_a$ replaced by $F_a^\prime[\ftrue]$ under a suitable nonlinearity condition. 
		In particular, this covers the following example:\\
		Let $\Omega$ be a bounded Lipschitz domain in $\Rset^d$ for $d\in\cbra{1,2,3}$ and $h_1\in C^\infty(\Omega)$ and $h_2\in C^\infty(\partial\Omega)$ be strictly postive. For $f\in \cbra{f\in L^\infty\colon f(x)\geq 0,\ \forall x\in \Omega, \supp(f)\subset \Omega}$ define $F(f)=u$ where $u$ solves
		\begin{align*}
			\rbra[\big]{-\Delta + f} u&= h_1 &&\text{in }\Omega,\\
			u&=h_2 && \text{on }\partial \Omega
		\end{align*}
		then \eqref{eq:smoothingNormF} and \eqref{eq:smoothingNormFinv} hold true for $a=2$.
	\end{itemize}
\end{example}
Due to  the assumptions \eqref{eq:mapping_params}, \eqref{eq:smoothingNormF} and the continuous 
embedding $\Besovpq[0]\hookrightarrow\Besovpq[-a]$ (see \eqref{eq:besovEmbedSmooth4Int}), 
$F_a\colon B^0_{p,q}(\Td)\to L^2(\Td)$ is well-defined and continuous. 
This allows us to choose 
\begin{equation}\label{eq:RY}
\Xspace=\Besovpq[0](\Td),\qquad 
\Rpen(\cdot)=\frac{1}{r}\altnorm{\cdot}{\Besovpq[0]}_{\mathcal W}^r 
\qquad \mbox{and}\qquad 
\Yspace=B_{2,2}^0(\Td)=L^2(\Td)
\end{equation}
with $r=\max\cbra{2,p,q}$ the modulus of convexity (see \cite{Kazimierski2013}) 
for arbitrary $p\in(1,2]$ and $q\in (1,\infty)$. The penalty term in the Tikhonov functional given by the Besov norm will be expressed via wavelet coefficients as defined in \eqref{eq:waveletBesovGeneral}, hence most constants will depend implicitly on the specific choice of the wavelet system which we will not mention further.

\subsection{Deterministic convergence rates}
We will first derive convergence rates for the deterministic error model \eqref{eq:det_noise}. 
We use the strategy of Theorem \ref{thm:strategy} to obtain a variational source condition first and then apply Proposition \ref{prop:vscRate}\eqref{prop:vscRate:rateGlobal}.

\begin{theorem}\label{thm:finiteVsc}
Assume \eqref{eq:det_noise}, \eqref{eqs:mappingFa} and \eqref{eq:RY} and suppose that 
$\ftrue\in B^s_{p,\infty}$ for some $s\in(0,\frac{a}{q-1})$
with $\altnorm{\ftrue}{B^s_{p,\infty}}\leq\varrho$. Then 
there exists 
a constant $c>0$ such that a variational source condition with
	\begin{equation*}
		\psi(\tau)=c \varrho^\nu \tau^\mu \quad\text{where }\quad \nu
		=\begin{cases}  \frac{qa}{a+s}, & q\geq 2, \\ \frac{2a}{a+s(q-1)},& q\leq 2, \end{cases} 
		\quad \text{and} \quad 
		\mu=\begin{cases}  \frac{q}{2} \frac{s}{a+s}, & q\geq 2,\\ \frac{s(q-1)}{a+s(q-1)}, & q\leq 2\end{cases}
	\end{equation*}
	holds true. Moreover, the Tikhonov functional in  \eqref{eq:tikhonov}  with $F=F_a$ has a minimizer $\fad$, and $\fad$ is unique if $F_a$ is linear. 
If $\alpha$ is chosen by \eqref{eq:vscRate:globalAlpha}, then every minimizer $\fad$ satisfies the error bound 
	\begin{equation}\label{eq:det_rate_finite_smoothing}
		\altnorm*{\fad-\ftrue}{\Besovpq[0]}\leq
\begin{cases} 
c \varrho^\frac{a}{a+s} \delta^\frac{s}{a+s}, & q\geq 2,\\
c \varrho^\frac{a}{a+s(q-1)} \delta^\frac{s(q-1)}{a+s(q-1)}, & q\leq 2
\end{cases}
	\end{equation}
with a constant $c$ independent of $\ftrue$, $\fad$, $\varrho$, and $\delta$. 
\end{theorem}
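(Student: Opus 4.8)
The plan is to feed the abstract strategy of Theorem~\ref{thm:strategy} with the projections $(P_j)_{j\in\Nset_0}$ from Assumption~\ref{ass:projectionBesov}, taking $\Rspace=\Besovpq[0]$, $\Yspace=L^2$, $r=\max\cbra{2,p,q}$ and (since the two-sided bounds \eqref{eq:smoothingNormF}--\eqref{eq:smoothingNormFinv} hold globally) $\gamma=0$ with no local restriction on $\altnorm*{\ftrue-f}{\Rspace}$. The smoothness function $\kappa$ is read off directly from Corollary~\ref{cor:projectionSubgradient}: for $\fsub\in\partial\tfrac1r\altnorm{\ftrue}{\Besovpq[0]}_{\mathcal W}^r$ and $\ftrue\in B^s_{p,\infty}$ with $\altnorm{\ftrue}{B^s_{p,\infty}}\le\varrho$ its Jackson estimate gives $\altnorm*{(I-P_j)\fsub}{\Rspace^*}\le c\varrho^{r-1}2^{-js(q-1)}=:\kappa(j)$, and clearly $\inf_j\kappa(j)=0$, so \eqref{eq:strategySmoothness}--\eqref{eq:strategyIndex} hold.

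The crux is the ill-posedness bound \eqref{eq:strategyIllposed}, i.e.\ the construction of $\sigma$. I first dualize against the output-compatible scale: since $\bigl(B^{-a}_{2,2}\bigr)^*=B^a_{2,2}$ and by the inverse estimate \eqref{eq:smoothingNormFinv},
\[
  \pairing*{P_j\fsub}{\ftrue-f}
  \le \altnorm*{P_j\fsub}{B^a_{2,2}}\,\altnorm*{\ftrue-f}{B^{-a}_{2,2}}
  \le L\,\altnorm*{P_j\fsub}{B^a_{2,2}}\,\altnorm*{F_a(\ftrue)-F_a(f)}{L^2}.
\]
To bound the remaining factor I use that Corollary~\ref{cor:projectionSubgradient} also yields $\fsub\in B^{s(q-1)}_{p^\prime,\infty}$ with norm $\le c\varrho^{r-1}$; because $p\le2$ gives $p^\prime\ge2$, the finite measure of $\Omega$ lets me lower the integrability index, $B^{s(q-1)}_{p^\prime,\infty}\hookrightarrow B^{s(q-1)}_{2,\infty}$, after which the Bernstein inequality of Assumption~\ref{ass:projectionBesov} applies \emph{precisely because} the hypothesis $s<\tfrac{a}{q-1}$ means $a>s(q-1)$. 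This produces $\altnorm*{P_j\fsub}{B^a_{2,2}}\le c\varrho^{r-1}2^{j(a-s(q-1))}$, so one may take $\sigma(j)=c\varrho^{r-1}2^{j(a-s(q-1))}$ in \eqref{eq:strategyIllposed}.

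It remains to evaluate the infimum \eqref{eq:strategyPsi}, which with $\gamma=0$ and $t=2$ reads $\psi_{\mathrm{vsc}}(\tau)=\inf_j\sbra*{\sigma(j)\tau^{1/2}+c\,\kappa(j)^{r^\prime}}$. Using $(r-1)r^\prime=r$ and balancing the competing exponentials $2^{j(a-s(q-1))}$ and $2^{-js(q-1)r^\prime}$ yields a H\"older index $\psi(\tau)=c\varrho^\nu\tau^\mu$ with
\[
  \mu=\frac{s(q-1)r^\prime}{2\rbra*{a-s(q-1)+s(q-1)r^\prime}},\qquad
  \nu=(r-1)+\frac{a-s(q-1)}{a-s(q-1)+s(q-1)r^\prime}.
\]
Substituting $r^\prime=\tfrac{r}{r-1}$ and then $r=q$ when $q\ge2$, respectively $r=2$ when $q\le2$, collapses these to exactly the stated $\mu$ and $\nu$; this bookkeeping through $r$ and $r^\prime$ is the only genuinely delicate point and is where I expect the main obstacle.

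For the convergence rate I apply Proposition~\ref{prop:vscRate}\eqref{prop:vscRate:rateGlobal} with the H\"older conjugate $\varphi_\psi$ from Example~\ref{ex:conjugate}\eqref{eq:conjugate:hoelder} and $\overline{\err}=\delta^2$, obtaining $\tfrac12\Delta_\Rpen(\fad,\ftrue)\le\Cerr\,\psi(\delta^2)=c\varrho^\nu\delta^{2\mu}$, and then convert to the penalty norm by Assumption~\ref{ass:bregmanNormBound}, which gives $\altnorm*{\fad-\ftrue}{\Besovpq[0]}\le c\varrho^{\nu/r}\delta^{2\mu/r}$; one checks that $\nu/r$ and $2\mu/r$ reduce to the exponents in \eqref{eq:det_rate_finite_smoothing}. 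Finally, existence of a minimizer follows from the direct method (coercivity and weak lower semicontinuity of the $r$-th power of the uniformly convex penalty together with continuity of $F_a$), and uniqueness for linear $F_a$ from strict convexity of the full Tikhonov functional.
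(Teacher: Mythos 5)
Your derivation of the variational source condition and of the rate is correct and follows essentially the same route as the paper: Theorem~\ref{thm:strategy} with the wavelet projections, $\kappa(j)=c\varrho^{r-1}2^{-js(q-1)}$ from Corollary~\ref{cor:projectionSubgradient}, $\sigma(j)=c\varrho^{r-1}2^{j(a-s(q-1))}$, $\gamma=0$, balancing of the two exponentials, and then Proposition~\ref{prop:vscRate}\eqref{prop:vscRate:rateGlobal} plus Assumption~\ref{ass:bregmanNormBound}; your exponent bookkeeping through $r,r^\prime$ checks out in both cases $q\geq 2$ and $q\leq 2$. The only substantive variation is in the verification of \eqref{eq:strategyIllposed}: you pair against the scale $B^a_{2,2}\times B^{-a}_{2,2}$ and lower the integrability index of $\fsub$ via $B^{s(q-1)}_{p^\prime,\infty}\hookrightarrow B^{s(q-1)}_{2,\infty}$ before applying Bernstein, whereas the paper applies a three-factor H\"older inequality ($L^{p^\prime}\cdot L^2\cdot L^{2p/(2-p)}$) together with \eqref{eq:lesbegueBesov} and the lifting operator $S_a$, and only then uses \eqref{eq:smoothingNormFinv}; the two computations are equivalent and produce the same $\sigma(j)$, with yours arguably slightly cleaner.

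One loose point: for the existence of $\fad$ your appeal to ``the direct method \ldots together with continuity of $F_a$'' is not sufficient when $F_a$ is nonlinear, since norm continuity of a nonlinear operator does not yield weak lower semicontinuity of $f\mapsto\altnorm*{F_a(f)-\gobs}{L^2}^2$ along a weakly convergent minimizing sequence. The paper closes this by using the \emph{compactness} of the embedding $\Besovpq[0]\hookrightarrow B^{-a}_{2,2}$ (guaranteed by \eqref{eq:mapping_params}) together with the Lipschitz bound \eqref{eq:smoothingNormF}, so that $F_a$ is weak-to-norm continuous on $\Besovpq[0]$ and the standard existence theorem applies; you should replace your continuity argument by this compactness argument. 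The uniqueness claim for linear $F_a$ via strict convexity is fine.
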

\begin{proof}
	We apply Theorem \ref{thm:strategy} with the choice $P_j$ as in \eqref{eq:waveletProjection}. Then we see by Corollary \ref{cor:projectionSubgradient} and our assumptions that we can choose $\kappa(j)=c \varrho^{r-1} 2^{-js(q-1)}$.
	
	To verify \eqref{eq:strategyIllposed} denote by $S_a$ the operator  $S_a:= \mathcal W^* \tilde S_a \mathcal W f$ where $(\tilde S_a \lambda)_{j,l}=2^{ja} \lambda_{j,l}$ for all $(j,l)\in I$. Using the relation to Lebesgue spaces \eqref{eq:lesbegueBesov} and \eqref{eq:smoothingNormFinv}, we obtain the estimate
	\begin{equation}\label{eq:finiteSigma}
	\begin{aligned}
		\pairing*{P_j \fsub}{\ftrue-f}&=\pairing*{S_a P_j \fsub}{ S_{-a}(\ftrue-f)} \\
		&\leq c \altnorm*{S_a P_j \fsub}{L^{p^\prime}} \altnorm*{ S_{-a}(\ftrue-f)}{L^2} \altnorm*{1}{L^{\frac{2p}{2-p}}(\Td)}\\
		&\leq c \altnorm*{S_a P_j \fsub}{B^0_{p^\prime,2}} \altnorm*{\ftrue-f}{B^{-a}_{2,2}}\\
		&\leq cL \altnorm*{P_j \fsub}{B^a_{p^\prime,2}} \altnorm*{F_a(\ftrue)- F_a(f)}{L^2}.
	\end{aligned}
	\end{equation}

	By Corollary \ref{cor:projectionSubgradient} we can hence choose $\gamma=0$ and
	\begin{equation*}
		\sigma(j)=c \varrho^{r-1} 2^{j\rbra*{a -s(q-1) }}
	\end{equation*}
	in \eqref{eq:strategyIllposed} with $c$ depending $c>0$ depending on the wavelet system and the parameters $s,p,q,a$.
	
	Now Theorem \ref{thm:strategy} implies that a variational source condition holds true with
	\begin{equation*}
		\psi_\text{vsc}(\tau)=\inf_{j\in \Nset_0} c \sbra*{ \varrho^{r-1} 2^{j\rbra*{a -s(q-1) }} \sqrt{\tau} +\varrho^{r} 2^{-js(q-1)r^\prime} }.
	\end{equation*}
	Choosing $j$ such that $2^j \sim (\varrho/\sqrt{t})^\tau$ with $\tau= \frac{1}{s(q-1)(r^\prime-1)+a}$ and we can estimate
	\begin{equation*}
		\psi_\text{vsc}(\tau)\leq c \varrho^{r- \frac{s(q-1)r^\prime}{a+s(q-1)(r^\prime-1)}} \tau^{\frac{1}{2} \frac{s(q-1)r^\prime}{s(q-1)(r^\prime-1)+a}}.
	\end{equation*}
	Now use that for $q\leq 2$ we have $r=r^\prime=2$ and for $q\geq 2$ we have $r=q$ and $r^\prime=q^\prime$.

The existence of $\fad$ follows from standard results (see, e.g., \cite[Thm. 3.22]{Scherzer_etal:09}) using the compactness of the embedding $\Besovpq[0]\hookrightarrow B^{-a}_{2,2}$ (see \eqref{eq:besovEmbedSmooth4Int})  
and \eqref{eq:mapping_params}) 
and \eqref{eq:smoothingNormF}. Uniqueness of $\fad$ for linear operators 
is obvious by strict convexity. 
From Proposition \ref{prop:vscRate} we obtain $\Delta_{\Rpen}(\fad,\ftrue) \leq \psi_\text{vsc}(\delta^2)$, and 
via Example \ref{ex:bregmanNormBound}\eqref{ex:bregmanNormBound:convex} (with $r=\max(2,q)$ 
as discussed after \eqref{eq:RY}) this yields the convergence rate \eqref{eq:det_rate_finite_smoothing}. 
\end{proof}

In practice the parameters $s$ and $\varrho$ describing the smoothness of $\ftrue$ 
are usually unknown, of course, and hence the a-priori rule 
\eqref{eq:vscRate:globalAlpha} is not implementable. Therefore, a-posteriori 
rules such as the discrepancy principle are used, under which the same 
error bounds can be shown without prior knowledge of $s$ and $\varrho$
(see, e.g., \cite{HM:12}). 

\subsection{Extensions}
In this subsection we discuss extensions of 
the results of the previous subsection resulting from different penalty terms and data-fidelity terms respectively.

\begin{theorem}\label{thm:differentSmooth}
Let the Assumptions of Theorem \ref{thm:finiteVsc} hold true, but in \eqref{eq:RY} set 
$\Xspace = \Besovpq[\tilde s]$ and 
$\Rpen(\cdot)=\frac{1}{r}\altnorm{\cdot}{\Besovpq[\tilde s]}_{\mathcal W}^r$ for $\tilde s\in \Rset$. Further replace the last inequality 
in \eqref{eq:mapping_params} by $a^*:=a+\tilde s>\frac{d}{p}-\frac{d}{2}$, and replace 
\eqref{eq:smoothingNorm2} by $\altnorm*{F_a(f_1)-F_a(f_2)}{B^{a^*}_{p,q}} \leq L \altnorm*{f_1-f_2}{B^{\tilde{s}}_{p,q}}$. 
Let $\ftrue\in B^s_{p,\infty}$ for some $s\in \Rset$ such that $s^*:=s-\tilde s \in(0,\frac{a^*}{q-1})$.  Then the Tikhonov minimizer $\fad$ in \eqref{eq:tikhonov} exists, and for $\alpha$ chosen by \eqref{eq:vscRate:globalAlpha} and $q\geq 2$ 
it satisfies
	\begin{equation*}
			\altnorm*{\fad-\ftrue}{\Besovpq[\tilde s]}
\leq 
c \varrho^\frac{a^*}{a^*+s^*} \delta^\frac{s^*}{a^*+s^*}.
	\end{equation*}
\end{theorem}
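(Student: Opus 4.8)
The plan is to reduce the claim to Theorem \ref{thm:finiteVsc} by conjugating the entire problem with the wavelet lifting operator that shifts the smoothness index by $\tilde s$. Following the construction of the operator $S_a$ in the proof of Theorem \ref{thm:finiteVsc}, I would set $S_{\tilde s}:=\mathcal W^*\tilde S_{\tilde s}\mathcal W$ with $(\tilde S_{\tilde s}\lambda)_{j,l}:=2^{j\tilde s}\lambda_{j,l}$. Reading off the wavelet norm \eqref{eq:waveletBesovGeneral:norm}, $S_{\tilde s}$ is a bijective isometry with $\altnorm*{S_{\tilde s}f}{\Besovpq[t]}_{\mathcal W}=\altnorm*{f}{\Besovpq[t+\tilde s]}_{\mathcal W}$ for every admissible $t$ (exactly for the wavelet norm, and up to fixed equivalence constants on the $B^\sigma_{2,2}$ scale, which is harmless since all hypotheses carry a free constant). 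I then introduce $g:=S_{\tilde s}f$, $g^\dagger:=S_{\tilde s}\ftrue$, the transformed operator $\tilde F:=F_a\circ S_{-\tilde s}$, and the transformed domain $\tilde{\Dset}:=S_{\tilde s}(\Dset)$, which is again nonempty, closed and convex because $S_{\tilde s}$ is a linear homeomorphism.

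The first main step is to check that $\tilde F$ satisfies the hypotheses \eqref{eqs:mappingFa} with $a$ replaced by $a^*=a+\tilde s$. Writing $f_i:=S_{-\tilde s}g_i$ and using the isometry, the modified smoothing assumption $\altnorm*{F_a(f_1)-F_a(f_2)}{\Besovpq[a^*]}\le L\altnorm*{f_1-f_2}{\Besovpq[\tilde s]}$ becomes $\altnorm*{\tilde F(g_1)-\tilde F(g_2)}{\Besovpq[a^*]}\le L\altnorm*{g_1-g_2}{\Besovpq[0]}$, i.e.\ \eqref{eq:smoothingNorm2} for $\tilde F$ with order $a^*$. Since $S_{-\tilde s}$ maps $B^{-a^*}_{2,2}$ isometrically onto $B^{-a}_{2,2}$, the unchanged bounds \eqref{eq:smoothingNormF} and \eqref{eq:smoothingNormFinv} turn verbatim into the corresponding equivalences between $L^2$ and $B^{-a^*}_{2,2}$ for $\tilde F$. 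The index constraint $a^*>\tfrac dp-\tfrac d2$ is the assumed replacement of the last inequality in \eqref{eq:mapping_params}, so the full set \eqref{eqs:mappingFa} holds for $\tilde F$ with smoothing order $a^*$, while $p,q$ and $r=\max\cbra{2,p,q}$ are untouched.

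The second step transports the variational data. Because $\altnorm*{g}{\Besovpq[0]}_{\mathcal W}=\altnorm*{f}{\Besovpq[\tilde s]}_{\mathcal W}$ holds exactly, the penalty $\tfrac1r\altnorm*{\cdot}{\Besovpq[\tilde s]}_{\mathcal W}^r$ is carried exactly onto $\tfrac1r\altnorm*{\cdot}{\Besovpq[0]}_{\mathcal W}^r$, and the $L^2$ data fidelity term is unchanged since $S_{\tilde s}$ acts only on the preimage space $\Xspace$; hence the Tikhonov functional in \eqref{eq:tikhonov} for $F_a$ over $\Besovpq[\tilde s]$ coincides, under $f=S_{-\tilde s}g$, with the one for $\tilde F$ over $\Besovpq[0]$. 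Consequently the minimizers correspond exactly via $\fad=S_{-\tilde s}\widehat{g}_\alpha$ for the same $\alpha$. Moreover $\altnorm*{g^\dagger}{B^{s^*}_{p,\infty}}=\altnorm*{\ftrue}{B^s_{p,\infty}}\le\varrho$ and $s^*=s-\tilde s\in(0,\tfrac{a^*}{q-1})$, which is precisely the source-smoothness hypothesis of Theorem \ref{thm:finiteVsc} for the transformed problem.

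Finally I would invoke Theorem \ref{thm:finiteVsc} for $\tilde F$, $g^\dagger$ with parameters $a^*,s^*$ and $q\ge2$ to get existence of $\widehat{g}_\alpha$ and the bound $\altnorm*{\widehat{g}_\alpha-g^\dagger}{\Besovpq[0]}\le c\varrho^{a^*/(a^*+s^*)}\delta^{s^*/(a^*+s^*)}$, and translate back through $\altnorm*{\fad-\ftrue}{\Besovpq[\tilde s]}=\altnorm*{\widehat{g}_\alpha-g^\dagger}{\Besovpq[0]}$ to reach the claim. I expect the only delicate point to be bookkeeping: one must ensure the wavelet regularity $\sigma$ exceeds $\max\cbra{|\tilde s|,|s|,|a^*|,|s^*(q-1)|}$ so that all the shifted Besov scales appearing in Theorem \ref{thm:subdiffSmoothness} and Corollary \ref{cor:projectionSubgradient} remain characterized by the wavelet system, and to confirm that $\tilde{\Dset}$ stays closed and convex so that existence of $\fad$ follows as before. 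The analytic content is inherited wholesale from Theorem \ref{thm:finiteVsc}; the index shift via $S_{\tilde s}$ is the entire idea.
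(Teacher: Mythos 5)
Your proposal is correct, and it reaches the result by a genuinely different route than the paper. The paper's proof is a re-derivation: it reruns the argument of Theorem \ref{thm:finiteVsc} (Theorem \ref{thm:strategy} with the wavelet projections \eqref{eq:waveletProjection}, together with the subgradient smoothness results of Theorem \ref{thm:subdiffSmoothness} and Corollary \ref{cor:projectionSubgradient} applied on the shifted scale) and simply records the new quantities $\kappa(j)=c\varrho^{r-1}2^{-js^*(q-1)}$ and $\sigma(j)=c\varrho^{r-1}2^{j(a^*-s^*(q-1))}$, after which the rate follows exactly as before. You instead use Theorem \ref{thm:finiteVsc} as a black box, conjugating with the wavelet multiplier $S_{\tilde s}$, which is an exact isometry between the $\mathcal W$-norms of $\Besovpq[t+\tilde s]$ and $\Besovpq[t]$; since the penalty is defined through $\altnorm{\cdot}{\Besovpq[\tilde s]}_{\mathcal W}$ and the data fidelity term only sees the image side, the two Tikhonov functionals coincide under $f=S_{-\tilde s}g$, minimizers correspond, and the hypotheses \eqref{eqs:mappingFa} transfer (exactly in wavelet norms, up to harmless constants otherwise), the key identity being $-a^*+\tilde s=-a$, which makes \eqref{eq:smoothingNormF}--\eqref{eq:smoothingNormFinv} come out right. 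One point you should make explicit: the variational source condition index functions $\psi$ of the two problems coincide under the conjugation (subgradients correspond via $S_{-\tilde s}^*$, and the dual pairings, Bregman distances and residual norms are all equal), so the rule \eqref{eq:vscRate:globalAlpha} prescribes the same $\alpha$ for both problems --- without this remark, ``for the same $\alpha$'' is an unjustified step. As for what each approach buys: yours is modular and makes transparent that the theorem is literally the $\tilde s=0$ case shifted, with no analytic work redone; the paper's version is shorter given the machinery already in place, exhibits the variational source condition for the original problem directly, and does not require an exact intertwining isometry between the penalty scale and the smoothing scale --- which is why the same strategy also covers settings (e.g.\ Theorem \ref{thm:otherDataTerm} or the backwards heat equation) where no such clean conjugation exists. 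Your closing bookkeeping caveats (wavelet regularity $\sigma$ large enough on all shifted scales, closedness and convexity of $S_{\tilde s}(\Dset)$) are exactly the right ones and are covered by the paper's standing assumptions.
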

\begin{proof}
	The proof is analogous to the proof of Theorem \ref{thm:finiteVsc}. Here we obtain 
	$\kappa(j) = c\varrho^{r-1}2^{-js^*(q-1)}$ and $\sigma(j) = c\varrho^{r-1}2^{j(a^*-s^*(q-1))}$.  
\end{proof}

\begin{remark}
Suppose the constraint $\ftrue\in \Dset$ is incorporated in the penalty term 
$\Rpen$ by replacing it 
by $\widetilde{\Rpen}(f):=\Rpen(f)+ \chi_{\Dset}(f)$ where $\chi_{\Dset}(f):=0$ if $f\in\Dset$ 
and $\chi_{\Dset}(f):=\infty$ else. Then $\partial\widetilde{\Rpen}(\ftrue)= \partial \Rpen(\ftrue) 
+ \partial \chi(\ftrue)$ by the sum rule. $\partial \chi(\ftrue)$ coincides with the normal cone 
at $\ftrue$ and differs from $\{0\}$ if $\ftrue$ belongs to the boundary of $\Dset$. 
In this case $\partial\widetilde{\Rpen}(\ftrue)$ may contain elements of higher smoothness than 
$\partial\Rpen(\ftrue)$ leading to faster rates of convergence
(see \cite{FH:11} and \cite[\S 5.4]{EHN:96}). 
\end{remark}

\begin{theorem}\label{thm:otherDataTerm}
The error bound \eqref{eq:det_rate_finite_smoothing} in 
Theorem \ref{thm:finiteVsc} remains true if we replace $\Yspace=L^2$ by $\Yspace=\Xspace=B^0_{p,q}$ 
for $p\in(1,2]$ and $q\in(1,\infty)$, if we replace the Tikhonov function \eqref{eq:tikhonov} by 
\eqref{eq:tikhonov_det} with arbitrary $t\geq 1$ and if we replace Assumption \eqref{eqs:mappingFa} by 
\[
\tfrac{1}{L}\altnorm*{f_1-f_2}{B^{-a}_{p,q}}\leq \altnorm*{F_a(f_1)-F_a(f_2)}{B^0_{p,q}} 
\leq L\altnorm*{f_1-f_2}{B^{-a}_{p,q}}.
\] 
\end{theorem}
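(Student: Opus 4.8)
The plan is to run the proof of Theorem~\ref{thm:finiteVsc} essentially verbatim, deriving a variational source condition from Theorem~\ref{thm:strategy} with the wavelet projections $P_j$ of \eqref{eq:waveletProjection} and then invoking Proposition~\ref{prop:vscRate}\eqref{prop:vscRate:rateGlobal}. Assumption~\ref{ass:bregmanNormBound} continues to hold with $r=\max\cbra{2,p,q}$ by the $r$-convexity of $B^0_{p,q}$ (Example~\ref{ex:bregmanNormBound}), and the smoothness side of the strategy is untouched: since $\fsub\in\partial\tfrac1r\altnorm*{\ftrue}{B^0_{p,q}}_{\mathcal W}^r$ and $\ftrue\in B^s_{p,\infty}$, Corollary~\ref{cor:projectionSubgradient} still provides $\kappa(j)=c\varrho^{r-1}2^{-js(q-1)}$ for \eqref{eq:strategySmoothness}. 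The only genuinely new ingredient is the ill-posedness estimate \eqref{eq:strategyIllposed}, which in Theorem~\ref{thm:finiteVsc} was carried out in \eqref{eq:finiteSigma} and must now be redone for the data norm $\altnorm*{\cdot}{B^0_{p,q}}$.

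First I would recompute $\sigma(j)$. With the lifting operators $S_a,S_{-a}$ from the proof of Theorem~\ref{thm:finiteVsc} (diagonal in the wavelet basis, hence formally self-adjoint), I write $\pairing*{P_j\fsub}{\ftrue-f}=\pairing*{S_aP_j\fsub}{S_{-a}(\ftrue-f)}$ and estimate directly through the duality of $B^0_{p^\prime,q^\prime}=(B^0_{p,q})^*$ with $B^0_{p,q}$:
\begin{align*}
	\pairing*{P_j\fsub}{\ftrue-f}
	&\leq \altnorm*{S_aP_j\fsub}{B^0_{p^\prime,q^\prime}}\,\altnorm*{S_{-a}(\ftrue-f)}{B^0_{p,q}}\\
	&= \altnorm*{P_j\fsub}{B^a_{p^\prime,q^\prime}}\,\altnorm*{\ftrue-f}{B^{-a}_{p,q}}
	\leq L\,\altnorm*{P_j\fsub}{B^a_{p^\prime,q^\prime}}\,\altnorm*{F_a(\ftrue)-F_a(f)}{B^0_{p,q}},
\end{align*}
where the last inequality uses the lower bound of the new mapping assumption. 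This step is in fact \emph{cleaner} than \eqref{eq:finiteSigma}: because the data space $B^0_{p,q}$ is now dual to the space $B^0_{p^\prime,q^\prime}$ in which $S_aP_j\fsub$ lives, there is no need to pass through Lebesgue spaces nor to pay the finite-measure factor $\altnorm*{1}{L^{2p/(2-p)}}$. Since $s<a/(q-1)$ guarantees $a>s(q-1)$, Corollary~\ref{cor:projectionSubgradient} yields $\altnorm*{P_j\fsub}{B^a_{p^\prime,q^\prime}}\leq c\varrho^{r-1}2^{j(a-s(q-1))}$, so that $\gamma=0$ and $\sigma(j)=c\varrho^{r-1}2^{j(a-s(q-1))}$, precisely the values obtained in Theorem~\ref{thm:finiteVsc}.

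Because $\sigma$, $\kappa$ and $\gamma$ now coincide with those of Theorem~\ref{thm:finiteVsc}, formula \eqref{eq:strategyPsi} returns the same index function: balancing $2^{j(a-s(q-1))}\tau^{1/t}$ against $2^{-js(q-1)r^\prime}$ in the infimum over $j$ gives a H\"older-type $\psi_{\mathrm{vsc}}(\tau)=c\varrho^{\nu}\tau^{\mu}$ with $\mu=\tfrac1t\cdot\tfrac{s(q-1)r^\prime}{a+s(q-1)(r^\prime-1)}$. The single place where a general $t\geq1$ enters is the passage to the rate: with $\Cerr=2^{t-1}$ the deterministic effective noise level obeys $\overline{\err}=\tfrac2t\delta^t$, and Proposition~\ref{prop:vscRate}\eqref{prop:vscRate:rateGlobal} gives $\tfrac12\Delta_\Rpen(\fad,\ftrue)\leq\Cerr\,\psi_{\mathrm{vsc}}(\overline{\err})$. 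Since $\psi_{\mathrm{vsc}}(\overline{\err})\sim\delta^{t\mu}$ and $t\mu=\tfrac{sq}{a+s}$ for $q\geq2$ respectively $\tfrac{2s(q-1)}{a+s(q-1)}$ for $q\leq2$ --- independently of $t$ --- the entire $t$-dependence is absorbed into the constant $2^{t-1}(2/t)^{\mu}$. Converting the Bregman bound into a norm bound by Example~\ref{ex:bregmanNormBound}\eqref{ex:bregmanNormBound:convex} and taking the $r$-th root (with $r=\max\cbra{2,q}$ since $p\leq2$) reproduces \eqref{eq:det_rate_finite_smoothing} verbatim.

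Finally, existence of a minimizer of \eqref{eq:tikhonov_det} with $\Yspace=B^0_{p,q}$ follows as in Theorem~\ref{thm:finiteVsc}: the operator $F_a$ factors as $B^0_{p,q}\hookrightarrow B^{-a}_{p,q}\to B^0_{p,q}$, where the embedding is compact for $a>0$ and the second map is Lipschitz by the upper bound of the new mapping assumption, so $F_a$ is weak-to-strong sequentially continuous and the data term is weakly lower semicontinuous; together with the coercive, weakly lower semicontinuous penalty the direct method applies for every $t\geq1$. The step demanding most care is therefore not the ill-posedness estimate --- which is actually easier here than in Theorem~\ref{thm:finiteVsc} --- but the exponent bookkeeping confirming that the power of $\delta$ is independent of $t$.
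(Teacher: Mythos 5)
Your proposal is correct and follows essentially the same route as the paper: the paper's own proof consists precisely of replacing \eqref{eq:finiteSigma} by the Besov duality estimate $\pairing*{P_j \fsub}{\ftrue-f}\leq c \altnorm*{P_j \fsub}{B^a_{p^\prime,q^\prime}} \altnorm*{\ftrue-f}{B^{-a}_{p,q}}$, bounding $\altnorm*{P_j \fsub}{B^a_{p^\prime,q^\prime}}$ via Corollary \ref{cor:projectionSubgradient} as before, and recording the exponents $\mu=\frac{q}{t}\frac{s}{a+s}$ ($q\geq 2$) and $\mu=\frac{2}{t}\frac{s(q-1)}{a+s(q-1)}$ ($q\leq 2$), exactly as you derive. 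Your additional bookkeeping showing that the $t$-dependence cancels in the final rate, and the existence argument via the compact embedding $B^0_{p,q}\hookrightarrow B^{-a}_{p,q}$, are correct elaborations of steps the paper leaves implicit.
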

\begin{proof}
In the proof of Theorem \ref{thm:finiteVsc} we have to adapt the estimate \eqref{eq:finiteSigma} in the 
following way:
\begin{align*}
	\pairing*{P_j \fsub}{\ftrue-f}\leq c \altnorm*{P_j \fsub}{B^a_{p^\prime,q^\prime}} \altnorm*{\ftrue-f}{B^{-a}_{p,q}}.
\end{align*}
The norm $\altnorm{P_j \fsub}{B^a_{p^\prime,q^\prime}}$ can be estimated as before. The exponents $\mu$ in the 
source condition are $\mu=\frac{q}{t}\frac{s}{a+s}$ for $q\geq 2$ and $\mu=\frac{2}{t}\frac{s(q-1)}{a+s(q-1)}$ 
for $q\in (1,2]$ in this case.  
\end{proof}


\begin{corollary}\label{coro:log}
Under the assumptions of Theorem \ref{thm:finiteVsc} with $q=2$ choose 
$\js:=\lfloor-\frac{1}{2a}\ln_2\alpha\rfloor$. Then
\begin{align}\label{eq:logLp}
\altnorm*{P_{\js}\fad-\ftrue}{L^p}\leq c\altnorm*{P_{\js}\fad-\ftrue}{B^0_{p,p}}
\leq c \varrho^{\frac{a}{s+a}}\delta^{\frac{s}{s+a}}\left(\ln \delta^{-1}\right)^{\frac{1}{p}-\frac{1}{2}}.
\end{align}
\end{corollary}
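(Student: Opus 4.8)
The plan is to transfer the $\Besovpq[0]$-rate of Theorem~\ref{thm:finiteVsc} (case $q=2$, hence $r=2$) to the $L^p$-setting. The obstruction is that for $p\le 2$ the space $B^0_{p,2}$ does not embed into $L^p$, whereas $B^0_{p,p}$ does (cf.~\eqref{eq:lesbegueBesov}); this is exactly why the truncation $P_{\js}$ is introduced and why a logarithmic factor must appear. The first inequality in \eqref{eq:logLp} is then simply this embedding $B^0_{p,p}\hookrightarrow L^p$, so it remains to bound $\altnorm*{P_{\js}\fad-\ftrue}{B^0_{p,p}}$.

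For the second inequality I would split $P_{\js}\fad-\ftrue=P_{\js}(\fad-\ftrue)-(I-P_{\js})\ftrue$ and estimate the two contributions in $B^0_{p,p}$ separately. The tail of $\ftrue$ is controlled by the Jackson inequality of Assumption~\ref{ass:projectionBesov} with $t=0<s$, target index $\tilde q=p$ and source index $\infty$, which gives
\[
\altnorm*{(I-P_{\js})\ftrue}{B^0_{p,p}}\le c\,2^{-\js s}\,\altnorm*{\ftrue}{B^s_{p,\infty}}\le c\,2^{-\js s}\varrho;
\]
crucially this term carries no logarithm. For the projected part I would write the wavelet norm \eqref{eq:waveletBesovGeneral:norm} at smoothness $0$ in terms of the level quantities $a_j:=2^{jd(1/2-1/p)}\altnorm*{\lambda_{j,\cdot}}{\ell^p(I_j)}$ of $g:=\fad-\ftrue$, so that $\altnorm*{P_{\js}g}{B^0_{p,p}}=(\sum_{j\le\js}a_j^p)^{1/p}$ and $\altnorm*{P_{\js}g}{B^0_{p,2}}=(\sum_{j\le\js}a_j^2)^{1/2}$. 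Since only the $\js+1$ levels $j\le\js$ are active and $p\le 2$, Hölder's inequality on this finite index set produces the logarithmic factor:
\[
\altnorm*{P_{\js}(\fad-\ftrue)}{B^0_{p,p}}\le(\js+1)^{\frac1p-\frac12}\altnorm*{P_{\js}(\fad-\ftrue)}{B^0_{p,2}}\le(\js+1)^{\frac1p-\frac12}\altnorm*{\fad-\ftrue}{B^0_{p,2}},
\]
where the last step uses that $P_{\js}$ merely deletes levels. Inserting the rate $\altnorm*{\fad-\ftrue}{B^0_{p,2}}\le c\varrho^{a/(a+s)}\delta^{s/(a+s)}$ from \eqref{eq:det_rate_finite_smoothing} already yields the stated power of $\delta$ times $(\js+1)^{1/p-1/2}$.

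Finally I would convert the $\js$-dependence into $\delta$-dependence through the a-priori rule \eqref{eq:vscRate:globalAlpha}. For the Hölder index function $\psi(\tau)=c\varrho^{2a/(a+s)}\tau^{s/(a+s)}$ of this case, Example~\ref{ex:conjugate} together with \eqref{eq:vscRate:globalAlpha} gives $\alpha\sim(\delta/\varrho)^{2a/(a+s)}$. With $\js=\lfloor-\tfrac1{2a}\ln_2\alpha\rfloor$ this yields on the one hand $2^{-\js s}\le c\,\alpha^{s/(2a)}\sim(\delta/\varrho)^{s/(a+s)}$, so the tail term equals $c\varrho^{a/(a+s)}\delta^{s/(a+s)}$ without a logarithm, and on the other hand $\js+1\le c\ln(\alpha^{-1})\sim c\ln(\delta^{-1})$, which turns $(\js+1)^{1/p-1/2}$ into $(\ln\delta^{-1})^{1/p-1/2}$; adding both contributions gives \eqref{eq:logLp}. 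I expect the main obstacle to be exactly this bookkeeping: confirming that only the projected part $P_{\js}(\fad-\ftrue)$ picks up the factor $(\js+1)^{1/p-1/2}$ while the Jackson tail stays logarithm-free, and tracing the scaling of $\alpha$ in $\delta$ and $\varrho$ through the definition of $\js$ so that the exponent $\tfrac1p-\tfrac12$ emerges correctly.
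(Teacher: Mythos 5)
Your proposal is correct and follows essentially the same route as the paper's proof: the same splitting $P_{\js}\fad-\ftrue=P_{\js}(\fad-\ftrue)-(I-P_{\js})\ftrue$, the same H\"older inequality over the $\js+1$ active levels producing the factor $\js^{1/p-1/2}\sim(\ln\delta^{-1})^{1/p-1/2}$ against the $B^0_{p,2}$ rate, and the same Jackson/parameter-choice argument showing the tail $2^{-\js s}\varrho\lesssim\varrho^{a/(s+a)}\delta^{s/(s+a)}$ carries no logarithm. Your write-up is merely more explicit about the level quantities and the embedding $B^0_{p,p}\hookrightarrow L^p$, which the paper leaves implicit.
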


\begin{proof}
Setting $h=\fad-\ftrue$ we get from H\"older's inequality that 
\[
\altnorm*{P_{\js}h}{B^0_{p,p}}
= \left(\sum\nolimits_{j=0}^{j_*}1\cdot \altnorm*{h_j}{L^p}^p\right)^{1/p}
\leq \js^{1/p-1/2}\altnorm*{P_{\js}h}{B^0_{p,2}}. 
\]
As $\alpha \sim (\delta/\varrho)^{2a/(s+a)}$ by \eqref{eq:vscRate:globalAlpha} we have 
$\altnorm*{(I-P_{\js})\ftrue}{B^0_{p,p}}\leq c2^{-\js s}\altnorm*{\ftrue}{B^s_{p,\infty}}
\leq c\varrho \sqrt{\alpha}^{s/a} = c\varrho^{a/(s+a)} \delta^{s/(s+a)}$. 
Combining both inequalities yields the assertion. 
\end{proof}

\begin{remark}
In the setting of Theorem \ref{thm:otherDataTerm} the 
projection $P_{\js}$ in \eqref{eq:logLp} can be omitted. 
This follows after some computations comparing the value of the Tikhonov functional for 
$\fad$ and $f^{*}_\alpha:= P_{\js}\fad + (I-P_{\js})\ftrue$. 
\end{remark}

\begin{remark}\label{rem:strategyLipschitz}
	The basic idea of Theorem \ref{thm:finiteVsc} can be generalized as follows: Assume that there exist Banach spaces $\Xspace_{\mathrm{Lip}}$ and $\Xspace_{\mathrm s}$ such that the embeddings $\Xspace\hookrightarrow \Xspace_{\mathrm s} \hookrightarrow \Xspace_{\mathrm{Lip}}$ are continuous. Let $(\Xspace_j^\prime)_{j\in \Nset}$ be a sequence of finite dimensional subspaces such that
	\begin{equation*}
		\overline{\bigcup_{j\in \Nset} \Xspace_j^\prime}^{\altnorm{\cdot}{\Zspace}}=\Zspace\qquad  \text{for }\Zspace\in \cbra{\Xspace^\prime, \Xspace_{\mathrm s}^\prime,\Xspace_{\mathrm{Lip}}^\prime},
	\end{equation*}
	and let $P_j$ be a projection onto $\Xspace^\prime_j$. Assume that $\ftrue$ is such that $\fsub\in \Xspace_{\mathrm s}^\prime$ and that for all elements $h\in \Xspace_{\mathrm s}^\prime$ the generalized Bernstein and Jackson inequalities
	\begin{align*}
		 \altnorm*{P_j h}{\Xspace_{\mathrm{Lip}}^\prime} &\leq \tilde \sigma(j) \altnorm*{h}{\Xspace_{\mathrm s}^\prime}\\
		 \text{and}\qquad \altnorm*{(I-P_j) h}{\Xspace^\prime} &\leq \tilde \kappa(j)\altnorm*{h}{\Xspace_{\mathrm s}^\prime}
	\end{align*}
	hold true. If the operator $F$ fulfills the Lipschitz estimate
	\begin{equation*}
		\altnorm*{f_1-f_2}{\Xspace_{\mathrm{Lip}}}\leq L \altnorm*{F(f_1)-F(f_2)}{\Yspace}\qquad \text{for all } f_1,f_2\in \Xspace
	\end{equation*}
	then \eqref{eq:strategy} is fulfilled with $\kappa(j)= \tilde \kappa(j) \altnorm*{\fsub}{\Xspace_{\mathrm s}^\prime}$, $\sigma(j)= L \tilde \sigma(j)\altnorm*{\fsub}{\Xspace_{\mathrm s}^\prime}$ and $\gamma=0$.
\end{remark}

\subsection{Statistical convergence rates}
As a variational source condition is fulfilled and the regularization functional fulfills Assumption \ref{ass:bregmanNormBound} we only need to show that the operator also fulfills Assumption \ref{ass:interpolation}. We then obtain convergence rates via Theorems \ref{thm:whiteNoiseRate} and \ref{thm:finiteVsc}.

\begin{lemma}\label{lem:interpolationFinite}
Suppose that $a>d/2$ and \eqref{eq:smoothingNorm2} hold true. Then the operator $F_a$ fulfills Assumption \ref{ass:interpolation} 
	and \eqref{eq:interpolation_norms}
	with $\beta=1-\frac{d}{2a}$ and $\gamma=\frac{d}{2ar}$. Moreover, there exists $c>0$ such that
	\begin{align}\label{eq:interpolationFa}
		\altnorm*{g}{B^{d/2}_{p,1}}&\le c \altnorm*{g}{L^2}^{1-\frac{d}{2a}} \altnorm*{g}{B^{a}_{p,q}}^{\frac{d}{2a}}
			\qquad\mbox{for all }g\in B^{a}_{p,q}.
			\end{align}
\end{lemma}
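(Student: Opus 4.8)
The plan is to establish the interpolation inequality \eqref{eq:interpolationFa} first --- this is the substantive part --- and then to deduce Assumption \ref{ass:interpolation} by feeding $g=F_a(f_1)-F_a(f_2)$ into \eqref{eq:interpolationFa} together with the smoothing estimate \eqref{eq:smoothingNorm2}, finishing with Remark \ref{rem:interpolationBregmanBound}. For \eqref{eq:interpolationFa} I would work with the dyadic decomposition $g=\sum_{j\in\Nset_0}g_j$ into frequency (or wavelet) blocks $g_j:=\sum_{l\in I_j}\hat g(l)e_l$. Up to equivalence of norms one has $\altnorm*{g}{B^s_{p,q}}\sim(\sum_j 2^{jsq}\altnorm*{g_j}{L^p}^q)^{1/q}$ (using \eqref{eq:waveletLevelNormEquivalence} in the wavelet case), and $\altnorm*{g}{L^2}^2=\sum_j\altnorm*{g_j}{L^2}^2$ by orthogonality. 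The crucial third ingredient, available precisely because $p\le2$ and $\Omega$ has finite measure, is the blockwise H\"older bound $\altnorm*{g_j}{L^p}\le c\altnorm*{g_j}{L^2}$.

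Writing $A:=\altnorm*{g}{L^2}$ and $B:=\altnorm*{g}{B^a_{p,q}}$, I would split the sum $\altnorm*{g}{B^{d/2}_{p,1}}\sim\sum_j 2^{jd/2}\altnorm*{g_j}{L^p}$ at a threshold $N\in\Nset_0$. For the low frequencies $j\le N$ I would use $\altnorm*{g_j}{L^p}\le c\altnorm*{g_j}{L^2}$ and Cauchy--Schwarz in $j$, together with $\sum_{j\le N}2^{jd}\sim2^{Nd}$, to get a bound $c\,2^{Nd/2}A$. For the high frequencies $j>N$ I would factor $2^{jd/2}=2^{j(d/2-a)}2^{ja}$ and apply H\"older in $j$ with exponents $q,q'$ against the $B^a_{p,q}$-norm; here the exponent $d/2-a$ is negative exactly because $a>d/2$, so the geometric series $\sum_{j>N}2^{j(d/2-a)q'}$ converges and yields $c\,2^{N(d/2-a)}B$. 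Combining gives $\altnorm*{g}{B^{d/2}_{p,1}}\le c\,(2^{Nd/2}A+2^{N(d/2-a)}B)$.

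Choosing $N$ with $2^N\sim(B/A)^{1/a}$ balances the two terms and produces $c\,A^{1-d/(2a)}B^{d/(2a)}$, i.e.\ \eqref{eq:interpolationFa} with $\theta=d/(2a)$, $1-\theta=\beta$. The integer constraint $N\ge0$ is harmless: the embedding $B^a_{p,q}\hookrightarrow L^2$, valid since $a>d/2\ge d(1/p-1/2)$ for $p\ge1$, gives $A\le cB$, so the balancing index is nonnegative up to a constant, and the residual bounded range of $B/A$ is covered by direct comparability.

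Finally, I would apply \eqref{eq:interpolationFa} to $g=F_a(f_1)-F_a(f_2)$ and insert \eqref{eq:smoothingNorm2} to replace $\altnorm*{F_a(f_1)-F_a(f_2)}{B^a_{p,q}}^{d/(2a)}$ by $(L\altnorm*{f_1-f_2}{B^0_{p,q}})^{d/(2a)}$, which is exactly \eqref{eq:interpolation_norms} with $\beta=1-d/(2a)$ and $\gamma r=d/(2a)$, hence $\gamma=d/(2ar)$ (note $\beta+\gamma r=1$, consistent with the linear case in Remark \ref{rem:interpolationBregmanBound}); Remark \ref{rem:interpolationBregmanBound}, using Assumption \ref{ass:bregmanNormBound}, then upgrades this to Assumption \ref{ass:interpolation}. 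I expect the main obstacle to be the interpolation inequality itself, specifically the bookkeeping of the two-regime split so that the exponents come out as exactly $\theta=d/(2a)$, together with the twofold use of $a>d/2$: for summability of the high-frequency tail and for the endpoint embedding that tames the integer threshold.
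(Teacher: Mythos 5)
Your proposal is correct, but it replaces the paper's key citation with a self-contained argument, so the two proofs are worth comparing. The paper disposes of \eqref{eq:interpolationFa} in two lines: abstract K-interpolation theory (citing \cite[Sec.~2.4.2]{Triebel2010}) gives $\altnorm*{g}{B^{d/2}_{p,1}}\le c\,\altnorm*{g}{B^{0}_{p,2}}^{1-d/(2a)}\altnorm*{g}{B^{a}_{p,q}}^{d/(2a)}$, and the inequality $\altnorm*{\cdot}{B^0_{p,2}(\Td)}\leq \altnorm*{\cdot}{L^2(\Td)}$ for $p\leq 2$ then replaces the middle space by $L^2$. You instead reprove this instance of real interpolation by hand: dyadic blocks, blockwise H\"older plus orthogonality and Cauchy--Schwarz for the low frequencies (these are exactly the facts behind the paper's embedding $L^2\hookrightarrow B^0_{p,2}$), a convergent geometric series for the high frequencies (first use of $a>d/2$), and balancing at $2^N\sim(B/A)^{1/a}$, with the endpoint embedding $B^a_{p,q}\hookrightarrow L^2$ (second use of $a>d/2$) taming the constraint $N\ge 0$. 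The paper's route buys brevity and avoids all threshold bookkeeping; yours buys transparency, since both uses of $a>d/2$ and all constants are explicit and no interpolation theory is invoked. One small point to tighten: for $q>2$ the embedding $B^a_{p,q}\hookrightarrow L^2$ does not follow from $a\ge d(\frac1p-\frac12)$ alone; route it as $B^{a}_{p,q}\subset B^{a'}_{p,1}\subset B^{0}_{2,1}\subset L^2$ for some $a'$ with $d(\frac1p-\frac12)\le a'<a$, which the strict inequalities $a>d/2>d(\frac1p-\frac12)$ permit. Your final step—inserting $g=F_a(f_1)-F_a(f_2)$, applying \eqref{eq:smoothingNorm2}, and upgrading \eqref{eq:interpolation_norms} to Assumption \ref{ass:interpolation} via Remark \ref{rem:interpolationBregmanBound} and Assumption \ref{ass:bregmanNormBound}—coincides with the paper's.
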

\begin{proof}
	By K-interpolation theory (see \cite[Sec.~2.4.2]{Triebel2010}) there exists a constant $c>0$ such that
	\begin{align*}
		\altnorm*{g}{B^{d/2}_{p,1}}&\le c \altnorm*{g}{B^{0}_{p,2}}^{1-\frac{d}{2a}} \altnorm*{g}{B^{a}_{p,q}}^{\frac{d}{2a}},
	\end{align*}
	and since $\altnorm{\cdot}{B^0_{p,2}(\Td)}\leq \altnorm{\cdot}{L^2(\Td)}$ for $p\leq 2$, 
	this implies \eqref{eq:interpolationFa}. Using \eqref{eq:smoothingNorm2} and 
Assumption \ref{ass:bregmanNormBound}	we obtain 
	\begin{align*}
		\altnorm*{F_a (f_1)-F_a (f_2)}{B^{a}_{p,q}}\le L\altnorm*{f_1-f_2}{B^0_{p,q}}\le L \rbra*{C_\Delta^{-1}\Delta_\Rpen(f_1,f_2)}^\frac{1}{r}.
	\end{align*}
This together with \eqref{eq:interpolationFa} for $g= F_a(f_1)-F_a(f_2)$ yields Assumption \ref{ass:interpolation} 
and \eqref{eq:interpolation_norms}. 
\end{proof}
Lemma \ref{lem:interpolationFinite} can not only be used to derive convergence rates, but also existence 
of a minimizer: 

\begin{proposition}\label{prop:existence}
Suppose that $a>d/2$ and $F_a$ satisfies  \eqref{eq:mapping_params}--\eqref{eq:smoothingNormF}. 
Then for the noise model \eqref{eq:rand_noise} with $\rand$ satisfying 
Assumption \ref{ass:dev_ineq}, the Tikhonov functional 
in \eqref{eq:tikhonov} with $F=F_a$ has a global minimizer $\fad$ almost surely.
\end{proposition}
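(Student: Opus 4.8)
The plan is to apply the direct method of the calculus of variations to the functional
\[
  T_\alpha(f):=\tfrac12\altnorm*{F_a(f)}{L^2}^2-\pairing*{\gobs}{F_a(f)}+\alpha\Rpen(f)
\]
from \eqref{eq:tikhonov}, working on the almost sure event on which $\rand\in B^{-d/2}_{p^\prime,\infty}(\Omega)$; this event has full probability by Assumption~\ref{ass:dev_ineq} with $\tilde p=p^\prime$, and on it $N:=\varepsilon\altnorm*{\rand}{B^{-d/2}_{p^\prime,\infty}}<\infty$, so that the argument becomes deterministic. It is convenient to centre at $\ftrue$: since $F_a(f)-\gtrue=F_a(f)-F_a(\ftrue)\in B^{a}_{p,q}\hookrightarrow B^{d/2}_{p,1}$ by \eqref{eq:smoothingNorm2} and $a>d/2$, the pairing with $\rand\in B^{-d/2}_{p^\prime,\infty}=(B^{d/2}_{p,1})'$ is well defined, and up to $f$-independent, almost surely finite constants $T_\alpha$ equals
\[
  \tfrac12\altnorm*{F_a(f)-\gtrue}{L^2}^2-\varepsilon\pairing*{\rand}{F_a(f)-\gtrue}+\alpha\Rpen(f).
\]

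First I would prove coercivity in $\Besovpq[0]$. The $L^2$-part is bounded below by $-\tfrac12\altnorm*{\gtrue}{L^2}^2$ after completing the square. For the stochastic part I would use the interpolation inequality \eqref{eq:interpolationFa} of Lemma~\ref{lem:interpolationFinite},
\[
  \varepsilon\absval*{\pairing*{\rand}{F_a(f)-\gtrue}}\le N\,c\,\altnorm*{F_a(f)-F_a(\ftrue)}{L^2}^{1-\frac{d}{2a}}\altnorm*{F_a(f)-F_a(\ftrue)}{B^{a}_{p,q}}^{\frac{d}{2a}},
\]
and control the two factors by \eqref{eq:smoothingNormF} together with the embedding $\Besovpq[0]\hookrightarrow B^{-a}_{2,2}$ and by \eqref{eq:smoothingNorm2}, respectively, both of which grow at most linearly in $\altnorm*{f-\ftrue}{\Besovpq[0]}$. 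As the two interpolation exponents sum to $(1-\tfrac{d}{2a})+\tfrac{d}{2a}=1$, the stochastic term grows at most linearly in $\altnorm*{f}{\Besovpq[0]}$, whereas $\alpha\Rpen(f)=\tfrac{\alpha}{r}\altnorm*{f}{\Besovpq[0]}_{\mathcal W}^r$ with $r=\max\{2,p,q\}\ge2$ grows superlinearly; hence $T_\alpha(f)\to\infty$ as $\altnorm*{f}{\Besovpq[0]}\to\infty$.

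Next I would take a minimizing sequence $(f_n)\subset\Dset$, which is bounded by coercivity, and pass to a subsequence with $f_n\rightharpoonup\fad$ weakly in the reflexive space $\Besovpq[0]$ (recall $1<p,q<\infty$). Since $\Dset$ is closed and convex it is weakly closed, so $\fad\in\Dset$. The compact embedding $\Besovpq[0]\hookrightarrow B^{-a}_{2,2}$ (cf.\ the existence argument in Theorem~\ref{thm:finiteVsc}) yields strong convergence $f_n\to\fad$ in $B^{-a}_{2,2}$, and then $F_a(f_n)\to F_a(\fad)$ in $L^2$ by \eqref{eq:smoothingNormF}. Because $F_a(f_n)-F_a(\fad)$ stays bounded in $B^{a}_{p,q}$ (by \eqref{eq:smoothingNorm2} and boundedness of $(f_n)$), applying \eqref{eq:interpolationFa} to $F_a(f_n)-F_a(\fad)$ upgrades this to convergence in $B^{d/2}_{p,1}$, whence $\pairing*{\rand}{F_a(f_n)-\gtrue}\to\pairing*{\rand}{F_a(\fad)-\gtrue}$. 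Thus all data terms converge, while the convex penalty $\Rpen$ is weakly lower semicontinuous; combining these gives $T_\alpha(\fad)\le\liminf_n T_\alpha(f_n)=\inf_{\Dset}T_\alpha$, so $\fad$ is a global minimizer on the chosen full-probability event.

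The main obstacle, and the only genuinely new ingredient compared with the deterministic existence proof, is giving meaning to and controlling the linear term $\pairing*{\rand}{F_a(f)}$ when $\rand$ is merely a distribution such as white noise. Everything hinges on the smoothing hypothesis $a>d/2$, which through Lemma~\ref{lem:interpolationFinite} places the relevant differences of $F_a$ into the predual $B^{d/2}_{p,1}$ of $B^{-d/2}_{p^\prime,\infty}$ and, crucially, makes the two interpolation exponents add up to exactly one, so that the stochastic term stays linear and is dominated by the superlinear penalty both in the coercivity and in the lower-semicontinuity step.
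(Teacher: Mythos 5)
Your proof is correct and follows essentially the same route as the paper's: both establish coercivity of the Tikhonov functional in $\Besovpq[0]$ on the almost sure event where the noise has finite $B^{-d/2}_{p^\prime,\infty}$-norm using the interpolation inequality \eqref{eq:interpolationFa} of Lemma~\ref{lem:interpolationFinite}, then run the direct method (reflexivity, compact embedding $\Besovpq[0]\hookrightarrow B^{-a}_{2,2}$, \eqref{eq:smoothingNormF} for strong $L^2$ convergence, the interpolation bound again to upgrade to $B^{d/2}_{p,1}$ convergence of the stochastic pairing, and weak lower semicontinuity of the penalty). The only cosmetic differences are that you centre the functional at $\gtrue$ and conclude coercivity by dominating a linearly growing noise term with the superlinear penalty, whereas the paper works with $\pairing{\gobs}{F(f)}$ directly and absorbs part of the mixed term into $\tfrac{1}{2}\altnorm{F(f)}{L^2}^2$ via Young's inequality, leaving a sublinear remainder.
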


\begin{proof}
Note that the data fidelity term in \eqref{eq:tikhonov} is not bounded 
from below in general, and therefore standard results in the literature such as \cite[Thm.~3.22]{Scherzer_etal:09}  do not apply. However, with the help of Lemma \ref{lem:interpolationFinite} we can show  
coercivity of the entire Tikhonov functional in $B^0_{p,q}$ if $N:=\altnorm{\gobs}{B^{-d/2}_{p',\infty}}<\infty$ 
(which is true with probability $1$ by Assumption~\ref{ass:dev_ineq}). To this end we bound the mixed term 
as follows using \eqref{eq:interpolationFa} and H\"older's inequality $xy\leq cx^{4a/(2a+d)} + \frac{1}{2}y^{4a/(2a-d)}$: 
\begin{align*}
\pairing*{\gobs}{F(f)} &\leq N\altnorm*{F(f)}{B^{d/2}_{p,1}} 
\leq CN\altnorm*{F(f)}{B^a_{p,q}}^{d/2a} \altnorm*{F(f)}{L^2}^{1-d/2a}\\
&\leq c N^{\frac{4a}{2a+d}}\big(\altnorm*{F(f)-F(0)}{B^a_{p,q}} + \altnorm*{F(0)}{B^a_{p,q}}\big)^{2\frac{d}{2a+d}}
+\tfrac{1}{2} \altnorm*{F(f)}{L^2}^2\\
&\leq c N^{\frac{4a}{2a+d}}\altnorm*{f}{B^0_{p,q}}^{2\frac{d}{2a+d}}+A
+\tfrac{1}{2} \altnorm*{F(f)}{L^2}^2
\end{align*}
with $A:=cN^{4a/(2a+d)}\altnorm{F(0)}{B^{d/2}_{p,1}}^{2d/(2a+d)}$ and a generic constant $c$. 
Plugging this into the Tikhonov functional yields 
\begin{align*}
&\tfrac{1}{2}\altnorm*{F(f)}{L^2}^2-\pairing*{\gobs}{F(f)} + \alpha\altnorm*{f}{B^0_{p,q}}^r\\
\geq& -cN^{\frac{4a}{2a+d}}\altnorm*{f}{B^0_{p,q}}^{2\frac{d}{2a+d}}+\alpha\altnorm*{f}{B^{0}_{p,q}}^r+A,
\end{align*} 
and as $r\geq 2$ the right hand side tends to $\infty$ as $\altnorm{f}{B^0_{p,q}}\to \infty$. This shows that 
a minimizing sequence $(f_n)$ of the Tikhonov functional must be bounded in $B^0_{p,q}$. 
As $B^0_{p,q}$ is reflexive, by the Banach-Alaoglu theorem there exists a subsequence $f_{n_k}$ and 
$f\in B^0_{p,q}$ such that $f_{n_k}\rightharpoonup f$ for $k\to \infty$. Since the embedding 
$B^0_{p,q}\hookrightarrow B^{-a}_{2,2}$ is 
compact, we have $\lim_{k\to\infty}\altnorm{f_{n_k}-f}{B^{-a}_{2,2}}=0$, and by \eqref{eq:smoothingNormF} also 
$\lim_{k\to\infty}\altnorm{F(f_{n_k})-F(f)}{L^2}=0$. Now it follows from \eqref{eq:interpolation_norms} 
and the boundedness of $\altnorm{f_{n_k}}{B^0_{p,q}}$ that 
$\altnorm{F(f_n)-F(f)}{B^{d/2}_{p,1}}$ tends to $0$ as $k\to\infty$. Together with the weak lower semicontinuity 
of $\altnorm{\cdot}{B^0_{p,q}}^r$ it follows that $f$ minimizes the Tikhonov functional. 
\end{proof}

Together with Theorem \ref{thm:finiteVsc}  we find the following:
\begin{theorem} \label{thm:prob_finiteRate}
Assume \eqref{eq:rand_noise} with $\rand$ satisfying Assumption 
\ref{ass:dev_ineq}, \eqref{eqs:mappingFa}, \eqref{eq:RY} with $a>d/2$ and $q\geq 2$. 
Moreover, suppose that $\ftrue\in B^s_{p,\infty}$ for some $s\in(0,\frac{a}{q-1})$
with $\altnorm{\ftrue}{B^s_{p,\infty}}\leq\varrho$. Then $\fad$ (as in 
Proposition \ref{prop:existence}) for an optimal choice of $\alpha$ as specified in the proof
satisfies the error bound
	\begin{align*}
&\prob\left[\altnorm*{\fad-\ftrue}{\Besovpq[0]}> \left(c+t\right) \varrho^\frac{a+d/2}{a+s+d/2} \varepsilon^\frac{s}{a+s+d/2} 
		 \right] \le \exp\rbra*{-C_\rand t^{\mu\rbra*{\frac{q}{2}+\frac{(q-2)d}{4a}}}}&
	\end{align*}	
for all $t>0$.
In particular, for any $\sigma\geq 1$ we have
	\begin{align*}
		\E\left(\altnorm*{\fad-\ftrue}{\Besovpq[0]}^\sigma\right)^{1/\sigma}\leq
C \varrho^\frac{a+d/2}{a+s+d/2} \varepsilon^\frac{s}{a+s+d/2}. 
	\end{align*}
Here $c$ and $C$ are positive constants independent of $\ftrue$, $\fad$, 
$\varepsilon$, and $\varrho$. 
\end{theorem}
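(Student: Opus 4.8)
The plan is to specialize the abstract stochastic bound of Theorem~\ref{thm:whiteNoiseRate} to the present operator and penalty, and then to convert the resulting estimate---which still contains the random quantity $\altnorm*{\rand}{B^{-d/2}_{p^\prime,\infty}}$---into a tail bound by means of Assumption~\ref{ass:dev_ineq}. First I would collect the three ingredients that are already available: Theorem~\ref{thm:finiteVsc} supplies the variational source condition \eqref{eq:vsc} with the H\"older-type index function $\psi(\tau)=c\varrho^{\nu}\tau^{m}$, where for $q\geq2$ one has $\nu=\frac{qa}{a+s}$ and $m=\frac q2\frac{s}{a+s}$ (I write $m$ here to keep $\mu$ free for the deviation exponent of Assumption~\ref{ass:dev_ineq}); Lemma~\ref{lem:interpolationFinite} verifies Assumption~\ref{ass:interpolation} with $\beta=1-\frac{d}{2a}$ and $\gamma=\frac{d}{2ar}$, where $r=\max\{2,p,q\}=q$ (the hypothesis $a>d/2$ ensures $\beta\in(0,1)$ and $\gamma<\tfrac12(2-\beta)$, so that part~\eqref{thm:whiteNoiseRate:rate} of Theorem~\ref{thm:whiteNoiseRate} applies); and the existence of $\fad$ is provided by Proposition~\ref{prop:existence}. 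Since $\psi$ is of H\"older type, Example~\ref{ex:conjugate} gives $\varphi_\psi(\tau)=c'\varrho^{\nu/(1-m)}\tau^{m/(1-m)}$.

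Inserting these into Theorem~\ref{thm:whiteNoiseRate}\eqref{thm:whiteNoiseRate:rate} yields, with the abbreviations $\kappa_1=\frac{2-\beta}{2-\beta-2\gamma}$, $\kappa_2=\frac{2}{2-\beta-2\gamma}$, $\kappa_3=\frac{m}{1-m}$ and $\kappa_4=\frac{\nu}{1-m}$, a bound of the form
\[
\tfrac12\Delta_\Rpen(\fad,\ftrue)\leq C\,\alpha^{-\kappa_1}\altnorm*{\varepsilon\rand}{B^{-d/2}_{p^\prime,\infty}}^{\kappa_2}+C'\varrho^{\kappa_4}\alpha^{\kappa_3}.
\]
I would then fix $\alpha=\alpha_*$ \emph{deterministically}, balancing the $\varrho$- and $\varepsilon$-dependence of the two summands with the noise factor treated as a constant, namely $\alpha_*\sim\varrho^{-\kappa_4/(\kappa_1+\kappa_3)}\varepsilon^{\kappa_2/(\kappa_1+\kappa_3)}$. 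The core of the computation is the simplification of the resulting exponents: writing $D_1=2aq+d(q-2)$ and $D_2=2a+(2-q)s$, one checks the identity $(2a+d)D_2+sD_1=2a(2a+d+2s)$, whence $\kappa_1+\kappa_3=\frac{2aq(2a+d+2s)}{D_1D_2}$ and therefore $\frac{\kappa_2\kappa_3}{\kappa_1+\kappa_3}=\frac{2qs}{2a+d+2s}$ and $\frac{\kappa_1\kappa_4}{\kappa_1+\kappa_3}=\frac{q(2a+d)}{2a+d+2s}$. These are exactly $q$ times the target exponents $\frac{s}{a+s+d/2}$ and $\frac{a+d/2}{a+s+d/2}$, so both summands carry the common factor $\rho_*^{\,q}$ with $\rho_*:=\varrho^{\frac{a+d/2}{a+s+d/2}}\varepsilon^{\frac{s}{a+s+d/2}}$ the claimed rate, leaving
\[
\Delta_\Rpen(\fad,\ftrue)\leq \rho_*^{\,q}\rbra[\big]{\tilde C\altnorm*{\rand}{B^{-d/2}_{p^\prime,\infty}}^{\kappa_2}+\tilde c}.
\]

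Next I would pass from the Bregman distance to the norm through Assumption~\ref{ass:bregmanNormBound}, $\altnorm*{\fad-\ftrue}{\Besovpq[0]}^q\leq C_\Delta^{-1}\Delta_\Rpen(\fad,\ftrue)$, which gives $\altnorm*{\fad-\ftrue}{\Besovpq[0]}\leq\rho_*\,\Phi\rbra*{\altnorm*{\rand}{B^{-d/2}_{p^\prime,\infty}}}$ for an increasing function $\Phi$ with $\Phi(x)\sim x^{\kappa_2/q}$ as $x\to\infty$. Choosing the constant $c$ in the statement so that $\Phi(M_\rand)\leq c$, the event $\cbra*{\altnorm*{\fad-\ftrue}{\Besovpq[0]}>(c+t)\rho_*}$ is contained in $\cbra*{\altnorm*{\rand}{B^{-d/2}_{p^\prime,\infty}}>M_\rand+g(t)}$, where the gap $g(t)=\Phi^{-1}(c+t)-M_\rand$ obeys $g(t)\gtrsim t^{q/\kappa_2}$, and a direct computation gives $q/\kappa_2=\frac q2+\frac{(q-2)d}{4a}$. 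Hence Assumption~\ref{ass:dev_ineq} delivers exactly the claimed tail bound $\exp\rbra*{-C_\rand t^{\mu(\frac q2+\frac{(q-2)d}{4a})}}$.

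Finally, for the moment estimate I would integrate this tail. Setting $X:=\altnorm*{\fad-\ftrue}{\Besovpq[0]}/\rho_*$, the bound $\prob[X>c+t]\leq\exp\rbra*{-C_\rand t^{\mu q/\kappa_2}}$ shows that $(X-c)_+$ is sub-Weibull, so $\E[X^\sigma]=\int_0^\infty\sigma u^{\sigma-1}\prob[X>u]\,\dd u$ is finite and bounded by a constant depending only on $\sigma$, $C_\rand$ and the exponent; this yields $\E\rbra*{\altnorm*{\fad-\ftrue}{\Besovpq[0]}^\sigma}^{1/\sigma}\leq C\rho_*$. I expect the main obstacle to be precisely the exponent bookkeeping of the middle step: verifying that the choice of $\alpha_*$ collapses the $\varrho$- and $\varepsilon$-powers onto the optimal rate $\rho_*$ and that the surviving power of the noise factor is exactly $\kappa_2$ with $q/\kappa_2=\frac q2+\frac{(q-2)d}{4a}$, and then ensuring that under the nonlinear substitution $t\mapsto g(t)$ both the transformed tail exponent and the constant $C_\rand$ survive cleanly.
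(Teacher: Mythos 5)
Your proposal is correct and follows essentially the same route as the paper's proof: the variational source condition from Theorem~\ref{thm:finiteVsc}, the interpolation bound of Lemma~\ref{lem:interpolationFinite} (with $r=q$, $\beta=1-\tfrac{d}{2a}$, $\gamma=\tfrac{d}{2aq}$) fed into Theorem~\ref{thm:whiteNoiseRate}\eqref{thm:whiteNoiseRate:rate}, a deterministic a-priori choice of $\alpha$ balancing the $\varrho$- and $\varepsilon$-powers, passage from the Bregman distance to the $\Besovpq[0]$-norm via $q$-convexity, and finally Assumption~\ref{ass:dev_ineq} for the tail; your exponent bookkeeping (in particular $\kappa_1+\kappa_3=\frac{2aq(2a+d+2s)}{D_1D_2}$ and $q/\kappa_2=\frac q2+\frac{(q-2)d}{4a}$) agrees exactly with the paper's exponents. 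You are in fact somewhat more explicit than the paper on the last two steps (the containment of events yielding the tail bound, and the integration of the tail for the moment estimate), which the paper compresses into a single sentence.
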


\begin{proof}
	As $q\ge 2$, by Theorem \ref{thm:finiteVsc} a variational source condition with $\psi(\tau)=c\varrho^{\frac{2a}{a+s}}\tau^\frac{s}{a+s}$ holds true and hence we can use Example \ref{ex:conjugate}, \eqref{eq:conjugate:hoelder} with the calculus rules for Fenchel duals to obtain that
	\begin{equation*}
		\varphi_\psi (\tau)= C \varrho^\frac{2qa}{2(a+s)-qs} \tau^\frac{qs}{2(a+s)-qs}.
	\end{equation*}
	Thus Theorem \ref{thm:whiteNoiseRate}\eqref{thm:whiteNoiseRate:rate} together with Lemma \ref{lem:interpolationFinite}  gives
	\begin{align*}
		\Delta_\mathcal{R}(\fad,\ftrue) \le C \alpha^{-\frac{q+qd/(2a)}{q+d(q-2)/(2a)}} \altnorm*{\varepsilon\rand}{B^{-d/2}_{p^\prime,\infty}}^{\frac{2q}{q+d(q-2)/(2a)}}+C \varrho^\frac{2qa}{2(a+s)-qs}\alpha^\frac{qs}{2(a+s)-qs}.
	\end{align*}
	By the choice $\alpha \sim \varrho^\frac{qa+d\frac{q}{2}(1-\frac{2}{q}))}{a+s+d/2} \varepsilon^\frac{2(a+s)-qs}{a+s+d/2}$ and using \eqref{eq:bregmanNormBound:convex} we find
	\begin{align*}
		\altnorm*{\fad-\ftrue}{\Besovpq[0]}\le C 
		\varrho^\frac{a+d/2}{a+s+d/2} \varepsilon^\frac{s}{a+s+d/2} \left(1+\altnorm*{\rand}{B^{-d/2}_{p^\prime,\infty}}^{\frac{2}{q+d(q-2)/(2a)}}\right),
	\end{align*}
	so the deviation inequality given by Assumption \ref{ass:dev_ineq} completes the proof.
\end{proof}

\subsection{Lower bounds}
\begin{theorem}\label{thm:optFinite}
Suppose that $F_a$  satisfies \eqref{eq:smoothingNormF}. Then 
there cannot exist a reconstruction method $R\colon L^2 \rightarrow \Besovpq[0]$ for the operators $F_a$ satisfying the worst case error bound
	\begin{equation*}
		\sup \cbra*{\altnorm*{f-R(\gobs)}{\Besovpq[0]} \colon \altnorm{f}{B^s_{p,\infty}}\leq \varrho,\altnorm*{F_a (f)-\gobs}{L^2} \leq \delta} = o\rbra*{\varrho^\frac{a}{s+a} \delta^\frac{s}{s+a}}
	\end{equation*}
	Hence for $q\geq 2$ the rate in Theorem~\ref{thm:finiteVsc} is optimal up to the value of the constant.
\end{theorem}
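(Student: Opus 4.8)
The plan is to use the lower bound on the modulus of continuity from Lemma~\ref{lem:lowerContinuity}, combined with Proposition~\ref{prop:besovLowerBasic}, to show that the worst case error of \emph{any} reconstruction method cannot decay faster than $\varrho^{a/(s+a)}\delta^{s/(s+a)}$. Since Lemma~\ref{lem:lowerContinuity} gives $\sup\{\norm{f-R(\gobs)}\colon\dots\}\geq\frac{1}{2}\omega(2\delta,\mathcal K)$ for the set $\mathcal K=\{f\in B^s_{p,\infty}\colon\altnorm{f}{B^s_{p,\infty}}\leq\varrho\}$, it suffices to produce a lower bound on $\omega(\delta,\mathcal K)$ of the claimed order. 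This is exactly what Proposition~\ref{prop:besovLowerBasic} delivers once its hypotheses are checked.

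First I would set $\Xspace=\Besovpq[0]$, $\Yspace=L^2$, and verify the three assumptions of Proposition~\ref{prop:besovLowerBasic}. Assumption (i), $F_a(0)=0$, follows after translating so that $\ftrue$ and comparison elements are measured by differences (the smoothing estimates are stated for differences $F_a(f_1)-F_a(f_2)$, so one works with an affine normalization or simply exploits that only differences enter). For assumption (ii) I would use the upper bound \eqref{eq:smoothingNormF}, namely $\altnorm{F_a(f_1)-F_a(f_2)}{L^2}\leq L\altnorm{f_1-f_2}{B^{-a}_{2,2}}$, and expand the $B^{-a}_{2,2}$ norm in the orthonormal system to read off the diagonal weights $b_{j,l}\sim 2^{-2ja}$ (here I use the norm representation \eqref{eq:normBesovLower} and the level-wise equivalence \eqref{eq:waveletLevelNormEquivalence}, choosing $(\tilde\phi_{j,l})$ to be the wavelet system). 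Assumption (iii) is precisely the content of Example~\ref{ex:normBesovLower}: the sequence $f_j:=\sum_{l\in\Gamma_j}2^{-jd/2}\phi_{j,l}$ with $|\Gamma_j|\sim 2^{jd}$ satisfies the required two-sided $L^2$ and $L^p$ bounds uniformly in $j$.

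With these verifications in place, Proposition~\ref{prop:besovLowerBasic} yields
\begin{equation*}
	\omega(\delta,\mathcal K)\geq c\,\sup_{j\in\Nset_0}\min\cbra*{2^{ja}\delta,\;2^{-js}\varrho},
\end{equation*}
using $(\max_l b_{j,l})^{-1/2}\sim 2^{ja}$. The supremum over $j$ of this minimum is maximized when the two arguments balance, i.e.\ $2^{ja}\delta\sim 2^{-js}\varrho$, which gives $2^{j(a+s)}\sim\varrho/\delta$ and hence a common value of order $\varrho^{a/(s+a)}\delta^{s/(s+a)}$. Choosing $j$ as the nearest integer realizing this balance (with only a bounded multiplicative loss) produces $\omega(\delta,\mathcal K)\geq c\,\varrho^{a/(s+a)}\delta^{s/(s+a)}$. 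Feeding this into Lemma~\ref{lem:lowerContinuity} shows that no reconstruction method can achieve a worst case error of order $o(\varrho^{a/(s+a)}\delta^{s/(s+a)})$, which is the assertion; the final sentence comparing with Theorem~\ref{thm:finiteVsc} is immediate since that theorem attains exactly this rate for $q\geq 2$.

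The main obstacle I anticipate is the bookkeeping in assumption (ii): correctly identifying $b_{j,l}\sim 2^{-2ja}$ requires translating the $B^{-a}_{2,2}$ norm into the diagonal form $\sum b_{j,l}|\tilde\lambda_{j,l}|^2$ demanded by the proposition, which hinges on the norm equivalence between the wavelet characterization and the $L^2$-based sequence norm at each dyadic level. Once the weights are pinned down the optimization over $j$ is routine, so the essential work is this norm translation together with checking that the normalization $F_a(0)=0$ can be arranged without disturbing the smoothing estimates.
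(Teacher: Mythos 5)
Your proposal is correct and follows essentially the same route as the paper: verify the hypotheses of Proposition~\ref{prop:besovLowerBasic} with $b_{j,l}\sim 2^{-2ja}$ via \eqref{eq:smoothingNormF}, balance $2^{ja}\delta\sim 2^{-js}\varrho$ to get $\omega(\delta,\mathcal K)\geq c\,\varrho^{a/(s+a)}\delta^{s/(s+a)}$, and conclude with Lemma~\ref{lem:lowerContinuity}. The only difference is that you spell out the verification of the three assumptions (including the harmless normalization $F_a(0)=0$ and the weight identification through the $B^{-a}_{2,2}$ wavelet norm), which the paper leaves implicit.
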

\begin{proof}
	The assumptions of Proposition \ref{prop:besovLowerBasic} are fulfilled with $b_{j,l}=c 2^{-2ja}$ for some $c>0$ by \eqref{eq:smoothingNormF}. Hence we obtain
	\begin{equation*}
		\omega(\delta, \mathcal K)\geq c \max_{k\in \Nset_0} \cbra*{\min \cbra*{2^{-ks} \varrho, 2^{ka} \delta}}
		\geq c \varrho^\frac{a}{s+a} \delta^\frac{s}{s+a}
	\end{equation*}
where we have chosen $k\in \Nset_0$ such that the terms are balanced, i.e.~$2^k\sim (\frac{\varrho}{\delta})^{1/(s+a)}$. 
Now the claim follows by Lemma \ref{lem:lowerContinuity}.
\end{proof}

\begin{remark}
	The statement of Theorem \ref{thm:optFinite} remains valid in the setting of Theorem \ref{thm:differentSmooth} if one replaces $a$ and $s$ by $a^*$ and $s^*$ respectively.
\end{remark}

Lower bounds for the statistical convergence rates can be concluded from results in \cite{Donoho1995}. 
Instead of the continuous Gaussian white noise model they consider  an $n$-dimensional \emph{normal means model}.
However as their results in  \cite[Thms.~7 and 9]{Donoho1995} do not depend on the dimension $n$ one can send $n$ to infinity, so that the Le Cam distance of the two models goes to zero (compare \cite[\S 1]{Nickl2016}) and thus conclude for general estimators $S=S(\gtrue+\varepsilon W)\in B^{s^*}_{p,q}$:
\begin{corollary}
	 We have
	\begin{align*}
		\inf\nolimits_{S}\sup\nolimits_{\altnorm{\gtrue}{B^{s^{**}}_{p,\infty}}\leq \varrho}\E\left(\altnorm*{\gtrue-S(\gtrue+\varepsilon W)}{B^{s^*}_{p,q}}\right)\ge c \varrho^{\frac{s^*+d/2}{s^{**}+d/2}}\varepsilon^\frac{s^{**}-s^*}{s+d/2},
	\end{align*}
	with $c$ depending on $s^*,s^{**},p,q$.
\end{corollary}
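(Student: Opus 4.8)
The plan is to transfer the estimation problem from the continuous Gaussian white noise model to a Gaussian sequence model by a wavelet expansion, and then to invoke the minimax lower bounds of \cite{Donoho1995}. First I would fix a wavelet system $(\phi_{j,l})_{(j,l)\in I}$ as in \S\ref{sec:besovSpacesGeneral} that simultaneously characterizes the two Besov norms $B^{s^{**}}_{p,\infty}$ and $B^{s^*}_{p,q}$ via the weighted sequence norm \eqref{eq:waveletBesovGeneral:norm}. Since $(\phi_{j,l})$ is orthonormal in $L^2$, the coefficients $\pairing*{W}{\phi_{j,l}}$ of the white noise are i.i.d.\ standard Gaussians, so applying the wavelet transform $\mathcal W$ to $\gobs=\gtrue+\varepsilon W$ yields exactly the Gaussian sequence model $\lambda^{\mathrm{obs}}_{j,l}=\lambda^\dagger_{j,l}+\varepsilon z_{j,l}$ with $z_{j,l}\sim N(0,1)$ i.i.d.\ and $\lambda^\dagger=\mathcal W\gtrue$.

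Under this isometry the constraint $\altnorm*{\gtrue}{B^{s^{**}}_{p,\infty}}\le\varrho$ becomes membership of $\lambda^\dagger$ in the corresponding Besov body, i.e.\ a weighted $\ell^p$-ball of radius comparable to $\varrho$, while the loss $\altnorm*{\gtrue-S}{B^{s^*}_{p,q}}$ is turned into the matching weighted Besov sequence norm of the coefficient error. Hence, up to the fixed norm-equivalence constants, the minimax risk in the corollary equals the minimax risk for estimating a Besov-body sequence under Besov-norm loss in the Gaussian sequence model at noise level $\varepsilon$, which is precisely the object bounded from below in \cite[Thms.~7 and~9]{Donoho1995}.

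The next step is to apply those two theorems. They are phrased for the $n$-dimensional normal means model, but their constants do not depend on $n$; truncating the sequence model to its first $n$ coordinates, applying the lower bound there, and letting $n\to\infty$ (so that the Le~Cam distance between the truncated and the full model vanishes, cf.\ \cite[\S1]{Nickl2016}) transfers the bound to the infinite sequence model, and therefore back to the white noise model through the wavelet isometry. Matching the resulting exponents to the present parametrization then reproduces the claimed powers of $\varrho$ and $\varepsilon$, with a constant $c$ depending only on $s^*,s^{**},p,q$ and $d$.

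The main obstacle is the bookkeeping involved in this translation. One has to reconcile the normalization conventions of \cite{Donoho1995}---their scaling of the Besov-ball radius, of the noise level, and of the dyadic weights $2^{jsq}2^{jd(1/2-1/p)q}$ appearing in \eqref{eq:waveletBesovGeneral:norm}---with ours, and to check that their lower bounds, typically stated for $\ell^p$-type loss, do cover the full Besov loss $B^{s^*}_{p,q}$ for the relevant range of the fine index $q$. Confirming that the extremal coefficient configurations realizing their bound respect the dyadic weighting, so that the exponents combine to exactly the stated rate and the limiting constant stays positive and $n$-independent, is the delicate part; the genuine probabilistic content, however, is entirely supplied by \cite{Donoho1995}.
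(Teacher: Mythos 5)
Your proposal is correct and follows essentially the same route as the paper: reduce to the Gaussian sequence model via the wavelet characterization of the Besov norms, invoke the dimension-independent lower bounds of \cite[Thms.~7 and 9]{Donoho1995} for the normal means model, and pass to the limit $n\to\infty$ using the vanishing Le Cam distance (cf.\ \cite[\S 1]{Nickl2016}). The paper states this argument even more tersely than you do; your additional care about the wavelet isometry and the normalization bookkeeping only makes explicit what the paper leaves implicit.
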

Assume additionally on $F_a$ that $F_a:\Besovpq\to \Besovpq[s+a]$ is surjective and 	 $\altnorm{f}{B^s_{p,\infty}}\leq L\altnorm{F_a f}{B^{s+a}_{p,\infty}}$  for all $f\in B^s_{p,\infty}$.
By setting $s^*=a$ and $s^{**}=s+a$ we find for $F_a$  that
	\begin{align*}
		\inf\nolimits_{S}\sup\nolimits_{\altnorm{F_a(\ftrue)}{B^{s+a}_{p,\infty}}\leq \varrho}\E\left(\altnorm*{F_a(\ftrue)-S(F_a(\ftrue)+\varepsilon W)}{B^a_{p,q}}\right)\ge C \varrho^{\frac{a+d/2}{s+a+d/2}}\varepsilon^\frac{s}{s+a+d/2}.
	\end{align*}
	 	Now by \eqref{eq:smoothingNormF} we have for reconstruction methods $R$ that
	\begin{align*}
		\altnorm*{\ftrue-R(F_a(\ftrue)+\varepsilon W)}{\Besovpq[0]}\ge \frac{1}{L} \altnorm*{F_a(\ftrue)-F_a R(F_a(\ftrue)+\varepsilon W)}{B^{a}_{p,q}}.
	\end{align*}
Thus we get a
lower bound coinciding with the upper bound in Theorem \ref{thm:prob_finiteRate} for $q\geq 2$: 
	\begin{align*}
		\inf\nolimits_{R}\sup\nolimits_{\altnorm{\ftrue}{B^s_{p,\infty}}\leq L\varrho}\E\left(\altnorm*{\ftrue-R(F_a(\ftrue)+\varepsilon W)}{B^0_{p,q}}\right)\ge C \varrho^{\frac{a+d/2}{s+a+d/2}}\varepsilon^\frac{s}{s+a+d/2}.
	\end{align*}

\subsection{Numerical validation}\label{sec:numeric}
We are considering a problem of the type \eqref{eqs:mappingFa} where $F_a:\Besovpq[0](\mathds T)\rightarrow L^2(\mathds T)$ is given by $F_a:=(I-\partial_x^2)^{-1}$, that is we have $a=2$, with a deterministic error model. The true solution $\ftrue$ is given by a continuous, piecewise linear function, therefore $\ftrue \in B^{s}_{p,\infty}$ for $s\leq 1+1/p$. 
As for $q= 2$ the obtained convergence rates are of optimal order, we test for different values of $p$ if they are also achieved numerically using the sequential discrepancy principle on the grid $\alpha_j=2^{-j}$ with parameter $\tau=2$, see \cite{Anzengruber2014} for details.

Numerical computations are carried out in \textsc{matlab}. To obtain an efficient implementation of the operator $F_a$ we use the FFT on a grid with $2^{10}$ nodes. For the Besov norm we use the wavelet decomposition of the Wavelet toolbox with periodic db7-wavelets. An inverse crime is avoided via generating data on a finer grid and undersampling. In order to obtain the minimizer of the Tikhonov functional we use the extension of the Chambolle-Pock algorithm to Banach spaces with a constant parameter choice rule, see \cite[Thm.~6]{Hohage2014a}, where the iterations are stopped when the current step gets small compared to the first. Note that the steps of this algorithm become especially simple since the considered spaces are $2$-convex. The duality mappings are evaluated with the help of Theorem \ref{thm:subdiffSmoothness}. For further details see Appendix \ref{sec:detailsSimulation}.

\begin{figure}[ht]
	\includegraphics[width=\textwidth]{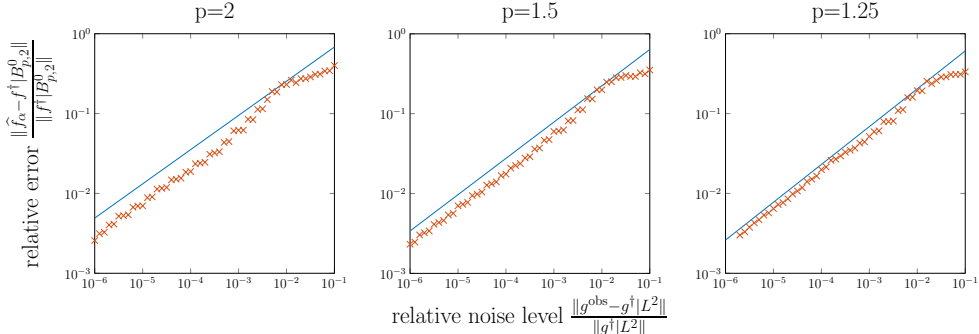}\vspace{-1.5em}
	\caption{Convergence rates in $B^0_{p,2}$ for different values of $p$. 
The crosses indicate reconstruction errors and the lines convergence 
rates predicted by Theorem \ref{thm:finiteVsc}.}\label{fig:rate}
\end{figure}
We tested which convergence rate we observe if we choose $\Rpen(f)=\frac{1}{2} \altnorm{f}{B^0_{p,2}}^2$ for different values of $p$.
The results of this test are shown in Figure \ref{fig:rate}. It can be seen that for the tested values of $p$ the observed rates coincide quite well with the predicted optimal rates. The staircase behavior of the reconstruction error plots -- best visible for $p=2$ -- is due to the sequential discrepancy principle; if for different noise levels the relative error is almost constant, then the same regularization parameter $\alpha$ was chosen. 
For $p=1.25$ the last three points are omitted in the plot since our code did not produce 
solutions satisfying the discrepancy principle.

\section{Backwards heat equation}\label{sec:BH}
As a second problem we consider the backwards heat equation on $\Td$, that is $(T_\BH f) (x)= u(x,\bar t)$ for some fixed $\bar t >0$ where $u$ solves 
\begin{align*}
	\partial_t u &= \Delta u &&\text{in }\Td \times (0,\overline{t})\\
	u(\cdot,0) &= f &&\text{on }\Td.
\end{align*}
Note that $T_\BH$ can be conveniently expressed via the Fourier series transform $\mathcal F$ as
\begin{equation*}
	T_\BH f = \mathcal F^* \exp\rbra*{-\bar t \absval{\cdot}^2} \mathcal F f.
\end{equation*}

\subsection{Deterministic convergence rates}
\begin{theorem}\label{thm:vscBH}
Let $p\in(1,2]$, $q\in (1,\infty)$ and $s>0$, and suppose that 
\eqref{eq:det_noise} and \eqref{eq:RY} hold true for  
$\ftrue\in B^s_{p,\infty}$ with $\altnorm{\ftrue}{B^s_{p,\infty}}\leq \varrho$. 
Then $\ftrue$ satisfies a variational source condition with
	\begin{equation*}
		\psi(\tau)=c \varrho^r  \sbra*{\frac{\sqrt \tau}{\varrho} \rbra*{3+\frac{\varrho}{\sqrt \tau}}^{1/2} +\rbra*{\ln \rbra*{\rbra*{3+\frac{\varrho}{\sqrt \tau}}^{1/2}}}^{-s(q-1)r^\prime/2}}.
	\end{equation*}
Moreover, $\fad$ in \eqref{eq:tikhonov} with $F=T_\BH$ 
exists and is unique for any $\alpha>0$. For the parameter choice rule 
\eqref{eq:vscRate:globalAlpha} and $q\geq 2$ it satisfies the error bound
	\begin{equation}\label{eq:heat_rate}
		\altnorm*{\fad-\ftrue}{\Besovpq[0]}\leq
c \varrho \rbra*{\ln \rbra*{\frac{\varrho}{\delta}}}^{-s/2} 
		\qquad \text{as } \delta\rightarrow 0
	\end{equation}
	with a constant $c>0$ independent of $\ftrue$, $\delta$, and $\varrho$. 
\end{theorem}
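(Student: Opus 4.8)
The plan is to apply Theorem~\ref{thm:strategy} with the Fourier projections $P_j$ from \eqref{eq:fourierProjection}, exactly as in the proof of Theorem~\ref{thm:finiteVsc}, but now adapted to the exponential ill-posedness of $T_\BH$. First I would record that the subgradient smoothness is unchanged: since $\ftrue\in B^s_{p,\infty}$ with $\altnorm{\ftrue}{B^s_{p,\infty}}\leq\varrho$, Corollary~\ref{cor:projectionSubgradient} yields $\fsub\in B^{s(q-1)}_{p',\infty}$ with norm $\leq c\varrho^{r-1}$, so the tail estimate \eqref{eq:strategySmoothness} can be taken as $\kappa(j)=c\varrho^{r-1}2^{-js(q-1)}$, and \eqref{eq:strategyIndex} holds. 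The genuinely new work is the ill-posedness inequality \eqref{eq:strategyIllposed}: I would estimate $\pairing{P_j\fsub}{\ftrue-f}$ by a H\"older/duality step like \eqref{eq:finiteSigma}, but now the multiplier relating $B^0_{p,q}$-projected data to the $L^2$ image norm involves $\exp(\bar t|\cdot|^2)$ rather than $2^{ja}$. Concretely, inserting the Fourier multiplier of $T_\BH^{-1}$ on the frequency band $\{|l|\lesssim 2^j\}$ produces a factor $\exp(c\bar t\,2^{2j})$, so the natural choice is $\sigma(j)=c\varrho^{r-1}\exp(c\bar t\,2^{2j})$ and $\gamma=0$.

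Having identified $\kappa$ and $\sigma$, the next step is to plug them into the index function \eqref{eq:strategyPsi}, giving (with $t=2$ and writing $\tau=\altnorm{T_\BH\ftrue-T_\BH f}{L^2}^2$)
\begin{equation*}
\psi_\mathrm{vsc}(\tau)=\inf_{j\in\Nset_0}\sbra*{c\varrho^{r-1}\exp\rbra*{c\bar t\,2^{2j}}\sqrt\tau+c\varrho^{r}2^{-js(q-1)r'}}.
\end{equation*}
I would then carry out the minimization over $j$ approximately. Balancing the two terms forces $\exp(c\bar t\,2^{2j})\sqrt\tau\sim \varrho\,2^{-js(q-1)r'}$; taking logarithms shows that the optimal level satisfies $2^{2j}\sim \frac{1}{c\bar t}\ln(\varrho/\sqrt\tau)$ up to lower-order corrections, which is exactly where the $\ln(\varrho/\sqrt\tau)$ and the stray $(3+\varrho/\sqrt\tau)^{1/2}$ factors in the claimed $\psi$ come from: the ``$3+$'' and square-root are there to make the logarithm well defined and monotone for all $\tau>0$, not just asymptotically. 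Substituting this $j$ back yields the logarithmic rate $\psi(\tau)\sim \varrho^r(\ln(\varrho/\sqrt\tau))^{-s(q-1)r'/2}$ plus a genuinely smaller term of order $\sqrt\tau$, matching the stated form after one rewrites $r,r'$ via $r=\max\{2,q\}$.

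For the concluding error bound \eqref{eq:heat_rate} I would argue as in Theorem~\ref{thm:finiteVsc}: existence and uniqueness of $\fad$ follow from strict convexity of $\Rpen$ together with the boundedness and injectivity of $T_\BH$ (here the smoothing is infinitely strong, so the compact-embedding argument is immediate), and for $q\geq 2$ one reads off $\nu,\mu$ from the logarithmic branch of the index function. Applying Proposition~\ref{prop:vscRate}\eqref{prop:vscRate:rateGlobal} with the choice \eqref{eq:vscRate:globalAlpha} gives $\Delta_\Rpen(\fad,\ftrue)\leq c\,\psi(\delta^2)$, and Example~\ref{ex:bregmanNormBound}\eqref{ex:bregmanNormBound:convex} converts this into the norm bound, where the exponent $-s/2$ arises because the logarithmic-type $\psi$ of \eqref{eq:conjugate:logarithmic} is self-conjugate in the relevant sense and taking the $r$-th root of $(\ln)^{-s(q-1)r'/2}$ leaves $(\ln)^{-s/2}$ for $q\geq 2$. \emph{The main obstacle I anticipate} is making the infimum over $j$ rigorous while keeping uniform control for \emph{all} $\tau>0$ rather than merely asymptotically: because the integer constraint $j\in\Nset_0$ and the exponential multiplier interact, one must verify that rounding $j$ to the nearest integer and the crude bound $\altnorm{f}{B^s_{p,\infty}}\le\varrho$ on the smoothness do not spoil concavity of the resulting $\psi$, which is precisely why the final $\psi$ is written with the regularizing $(3+\varrho/\sqrt\tau)^{1/2}$ factor instead of a clean power of a logarithm.
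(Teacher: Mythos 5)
Your proposal is correct and follows essentially the same route as the paper's proof: Theorem~\ref{thm:strategy} with the Fourier projections \eqref{eq:fourierProjection}, $\kappa(j)=c\varrho^{r-1}2^{-js(q-1)}$ from Corollary~\ref{cor:projectionSubgradient}, the band-restricted inverse-multiplier bound giving $\sigma(j)=c\varrho^{r-1}\ee^{c\bar t 2^{2j}}$ with $\gamma=0$, the balancing $2^{2j}\sim\frac{1}{\bar t}\ln\bigl(3+\varrho/\sqrt\tau\bigr)^{1/2}$, and then Proposition~\ref{prop:vscRate}\eqref{prop:vscRate:rateGlobal} plus $r$-convexity to get the exponent $-s/2$ for $q\geq 2$. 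The only cosmetic difference is that the paper applies the H\"older/stability estimate with $\altnorm{\fsub}{L^{p'}}$ kept fixed and the factor $\ee^{2^{2j}\bar t}$ placed on $\altnorm{P_j^*(\ftrue-f)}{L^2}$, which is equivalent to your multiplier-insertion description.
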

\begin{proof}
	As in the proof of Theorem \ref{thm:finiteVsc}, we apply Theorem \ref{thm:strategy} but this time with the choice $P_j$ as in \eqref{eq:fourierProjection}. Again we obtain by Corollary \ref{cor:projectionSubgradient} and our assumptions that we can choose $\kappa(j)=c \varrho^{r-1} 2^{-js(q-1)}$.
	
	In order to verify \eqref{eq:strategyIllposed} note that
	\begin{align*}
		\altnorm*{P_j^* (\ftrue-f)}{L^2}^2 &=\sum_{k=0}^j \sum_{l\in I_k} \absval*{\widehat{(\ftrue-f)}(l)}^2
		\leq \rbra*{\ee^{2^{2j} \bar t }}^2 \sum_{k=0}^j \sum_{l\in I_k}  \absval*{\ee^{-\absval{l}^2 \bar t} \widehat{(\ftrue-f)}(l)}^2\\
		&\leq \rbra*{\ee^{2^{2j} \bar t}}^2 \altnorm*{T(\ftrue-f)}{L^2}^2.
	\end{align*}
	Therefore, we can estimate
	\begin{align*}
		\pairing*{P_j \fsub}{\ftrue-f} & \leq \altnorm*{\fsub}{L^{p^\prime}} \altnorm*{1}{L^{\frac{2p}{2-p}}}\altnorm*{P_j^* (\ftrue-f)}{L^2}\\
		 &\leq c \altnorm*{\fsub}{B_{p^\prime,\infty}^{s(q-1)}} \ee^{2^{2j} \bar t} \altnorm*{T(\ftrue-f)}{L^2} \\
		 &\leq c \varrho^{r-1} \ee^{2^{2j} \bar t} \altnorm*{T(\ftrue-f)}{L^2}
	\end{align*}
	and hence choose $\sigma(j)=c \varrho^{r-1} \ee^{2^{2j} \bar t}$ and $\gamma=0$.
	
	This implies by Theorem \ref{thm:strategy} that a variational source condition with
	\begin{equation*}
		\psi_\text{vsc}(\tau)=\inf_{j\in \Nset_0} c \sbra*{ \varrho^{r-1} \ee^{2^{2j} \bar t} \sqrt \tau +\varrho^{r} 2^{-js(q-1)r^\prime} }
	\end{equation*}
	holds true. Now choosing $j$ such that
	\(
		2^{2j}\sim\frac{1}{\bar t} \ln  \sqrt{3+\frac{\varrho}{\sqrt \tau}}
	\)
	we obtain that
	\begin{align*}
		\psi_\text{vsc}(\tau) &\leq c \varrho^r  \sbra*{\frac{\sqrt \tau}{\varrho} \rbra*{3+\frac{\varrho}{\sqrt \tau}}^{1/2} +\rbra*{\ln \rbra*{\rbra*{3+\frac{\varrho}{\sqrt \tau}}^{1/2}}}^{-s(q-1)r^\prime/2}}\\ 
		& \leq c \varrho^r \rbra*{\ln \rbra*{\frac{\varrho}{\delta}}}^{-s(q-1)r^\prime/2} \sbra*{1+o(1)}\qquad \text{as } \delta\rightarrow 0.
	\end{align*}
Existence of $\fad$ and \eqref{eq:heat_rate} follows along the lines of 
the proof of Theorem \ref{thm:finiteVsc}. 
\end{proof}

\begin{remark}\label{rem:strategyInterchange}
	Note that the verification of \eqref{eq:strategyIllposed} is very different from Theorem \ref{thm:finiteVsc}. For the latter the choice of $P_j$ is not important as long as the Bernstein and Jackson inequalities of Assumption \ref{ass:projectionBesov} hold. Here, however, the forward operator $F$ is too smoothing in order to get Lipschitz stability estimates in the scale of Besov spaces. Therefore, we use an inequality of the form $\norm{P_j (f_1-f_2)}\leq \sigma_j \norm{F (f_1)-F(f_2)}$ for some sequence $\sigma_j>0$ quite similar to Example \ref{ex:strategy}. Such an inequality can in general only be verified for appropriately chosen 
	$P_j$. We refer to \cite{HW:15} for another example with $\gamma\neq 0$.
\end{remark}

\begin{theorem}\label{thm:lowerBH}
	Let $1\leq p \leq 2$, $1 \leq q \leq \infty$, $s,\varrho>0$. Then there cannot exist a reconstruction method 
	$R:L^2\to \Besovpq[0]$ 
	for the backward heat equation such that
	\begin{equation*}
		\sup \cbra*{\altnorm*{f-R(\gobs)}{\Besovpq[0]} \colon \altnorm{f}{B^s_{p,\infty}}\leq \varrho, \altnorm*{T_\BH f-\gobs}{L^2} \leq \delta} = o\rbra*{ \varrho \rbra*{\ln \frac{\varrho}{\delta}}^{-\frac{s}{2}}}
	\end{equation*}
	as $\delta \rightarrow 0$. Hence the convergence rate of Theorem \ref{thm:vscBH} is optimal for $q\geq 2$ as $\delta\rightarrow 0$ up to the value of the constant.
\end{theorem}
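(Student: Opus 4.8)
The plan is to mirror the proof of Theorem~\ref{thm:optFinite}: first bound the modulus of continuity $\omega(\delta,\mathcal K)$ over the source set $\mathcal K=\cbra{f\in B^s_{p,\infty}\colon\altnorm{f}{B^s_{p,\infty}}\leq\varrho}$ from below via Proposition~\ref{prop:besovLowerBasic}, and then invoke Lemma~\ref{lem:lowerContinuity} to turn this into a lower bound for the worst case reconstruction error. I would work in the Fourier setting \eqref{eq:fourierBesovNorm}, for which assumption \ref{prop:besovLowerBasic:sequence} of Proposition~\ref{prop:besovLowerBasic} holds with $c_1=c_2=1$ and $f_j=e_l$ for some $l\in I_j$ by Example~\ref{ex:normBesovLower}\eqref{ex:normBesovLower:fourier}. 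Since $T_\BH$ is linear we have $T_\BH 0=0$, and since it is diagonalized by the Fourier basis with $\widehat{T_\BH f}(z)=\ee^{-\bar t\absval{z}^2}\hat f(z)$, Parseval's identity gives $\altnorm{T_\BH f}{L^2}^2=\sum_{z}\ee^{-2\bar t\absval{z}^2}\absval{\hat f(z)}^2$. Thus the bound in the second hypothesis of Proposition~\ref{prop:besovLowerBasic} holds as an equality with $b_{j,l}=\ee^{-2\bar t\absval{l}^2}$.

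The decisive quantity is $\max_{l\in I_j}b_{j,l}$. By the definitions \eqref{eq:fourierBesovNorm:chi} and \eqref{eq:fourierBesovNorm:innerSet}, every $l\in I_j$ satisfies $\absval{l}_\infty\sim 2^j$, hence $\absval{l}^2\geq c_0 2^{2j}$ for some $c_0>0$, so that $\rbra{\max_{l\in I_j}b_{j,l}}^{-1/2}\sim \ee^{\bar t c_0 2^{2j}}$. Inserting this into Proposition~\ref{prop:besovLowerBasic} yields
\begin{equation*}
	\omega(\delta,\mathcal K)\geq c\sup_{j\in\Nset_0}\min\cbra*{\ee^{\bar t c_0 2^{2j}}\delta,\,2^{-js}\varrho}.
\end{equation*}
The first term increases and the second decreases in $j$, so I would pick $j$ to balance them. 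The dominant balance is $\bar t c_0 2^{2j}\sim\ln\rbra*{\varrho/\delta}$, i.e.\ $2^{2j}\sim\frac{1}{\bar t c_0}\ln\rbra*{\varrho/\delta}$, for which the decreasing term is of order $2^{-js}\varrho\sim\varrho\rbra*{\ln\rbra*{\varrho/\delta}}^{-s/2}$. Rounding this choice to the nearest integer $j\in\Nset_0$ costs only a constant factor, giving $\omega(\delta,\mathcal K)\geq c\varrho\rbra*{\ln\rbra*{\varrho/\delta}}^{-s/2}$ for all sufficiently small $\delta$.

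Finally, Lemma~\ref{lem:lowerContinuity} bounds the worst case error of any reconstruction method $R$ from below by $\tfrac12\omega(2\delta,\mathcal K)$; replacing $\delta$ by $2\delta$ perturbs the logarithm only at lower order, so the bound remains of order $\varrho\rbra*{\ln\rbra*{\varrho/\delta}}^{-s/2}$, contradicting the assumed $o$-behaviour and proving the claim. The optimality assertion for $q\geq 2$ then follows by comparison with the upper rate \eqref{eq:heat_rate} of Theorem~\ref{thm:vscBH}; the endpoint $p=1$ is handled by the same single-mode construction, for which $\altnorm{e_l}{L^p}=1$ persists down to $p=1$. I expect the only genuinely delicate point to be the asymptotic balancing: one must check that the subdominant $-js\ln 2$ correction to the exponent does not change the leading logarithmic power, and that the restriction to integer $j$ together with the factor $2\delta$ from Lemma~\ref{lem:lowerContinuity} are absorbed into the constant without degrading the rate.
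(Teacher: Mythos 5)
Your proposal is correct and follows essentially the same route as the paper's proof: invoking Proposition~\ref{prop:besovLowerBasic} in the Fourier setting of Example~\ref{ex:normBesovLower}\ref{ex:normBesovLower:fourier} with $b_{j,l}=\exp(-2\bar t\absval{l}^2)$, balancing $\ee^{\bar t c_0 2^{2j}}\delta$ against $2^{-js}\varrho$ via $2^{2j}\sim\frac{1}{\bar t c_0}\ln(\varrho/\delta)$, and concluding with Lemma~\ref{lem:lowerContinuity}. Your additional care about the constant $c_0$ in $\absval{l}^2\gtrsim 2^{2j}$, the integer rounding of $j$, the subdominant $-js\ln 2$ correction, and the $2\delta$ factor only sharpens the (rather terse) argument given in the paper.
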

\begin{proof}
	Choosing $b_{j,l}=\exp\rbra{-2 \bar t \absval{l}^2}$ the assumptions of Proposition \ref{prop:besovLowerBasic} are fulfilled 
as shown in Example \ref{ex:normBesovLower}\ref{ex:normBesovLower:fourier}. Hence we obtain
	\begin{equation*}
		\omega(\delta, \mathcal K)\geq \max_{k\in \Nset_0} \cbra*{\min \cbra*{2^{-ks} \varrho, \ee^{\bar t 2^{2k}} \delta}}
		\geq \min \cbra*{ \varrho \rbra*{\frac{1}{\bar t} \ln \frac{\varrho}{\delta}}^{-\frac{s}{2}}, \varrho}
	\end{equation*}
	where we have chosen $k\in \Nset_0$ such that $2^{2k}\sim\frac{1}{\bar t} \ln \frac{\varrho}{\delta}$. 
	Hence the claim follows from Lemma \ref{lem:lowerContinuity}.
\end{proof}

\subsection{Statistical convergence rates}
For exponentially ill-posed problems a rather coarse interpolation bound is sufficient: 
\begin{lemma}\label{lem:interpolationBH}
	The operator $T_\BH$ fulfills Assumption \ref{ass:interpolation} with $\beta=\frac{1}{2},\gamma=\frac{1}{2r}$.
\end{lemma}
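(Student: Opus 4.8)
The plan is to verify the stronger norm inequality \eqref{eq:interpolation_norms} and then invoke Remark \ref{rem:interpolationBregmanBound}, which applies because Assumption \ref{ass:bregmanNormBound} holds for the $r$-convex penalty $\Rpen=\tfrac1r\altnorm{\cdot}{\Besovpq[0]}^r$ with $r=\max\{2,p,q\}$ as fixed in \eqref{eq:RY}. Since $T_\BH$ is linear, writing $g:=f_1-f_2$ it suffices to establish
\[
\altnorm*{T_\BH g}{B^{d/2}_{p,1}}\leq C\altnorm*{T_\BH g}{L^2}^{1/2}\altnorm*{g}{\Besovpq[0]}^{1/2},
\]
which corresponds to $\beta=\tfrac12$ and $\gamma r=\tfrac12$; note that this respects the constraint $\beta+\gamma r=1$ forced by linearity in Remark \ref{rem:interpolationBregmanBound}.

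First I would split the smoothing index by real interpolation, exactly as in the proof of Lemma \ref{lem:interpolationFinite}. Since $\tfrac d2=\tfrac12\cdot0+\tfrac12\cdot d$, the identity $(B^0_{p,2},B^d_{p,q})_{1/2,1}=B^{d/2}_{p,1}$ (the fine indices being irrelevant as $0\neq d$) yields
\[
\altnorm*{T_\BH g}{B^{d/2}_{p,1}}\leq C\altnorm*{T_\BH g}{B^0_{p,2}}^{1/2}\altnorm*{T_\BH g}{B^d_{p,q}}^{1/2}.
\]
For the first factor I use the embedding $\altnorm{\cdot}{B^0_{p,2}(\Td)}\leq\altnorm{\cdot}{L^2(\Td)}$ valid for $p\leq2$ (already used in Lemma \ref{lem:interpolationFinite}), so that $\altnorm{T_\BH g}{B^0_{p,2}}\leq\altnorm{T_\BH g}{L^2}$.

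The remaining and main step is the smoothing bound $\altnorm{T_\BH g}{B^d_{p,q}}\leq C\altnorm{g}{\Besovpq[0]}$, which I would prove blockwise in the Fourier characterization \eqref{eq:fourierBesovNorm}. Writing $g_j:=\sum_{l\in I_j}\hat g(l)e_l$ for the $j$-th Fourier block and recalling that $T_\BH$ is the multiplier with symbol $\exp(-\bar t\absval{\cdot}^2)$, the corresponding block of $T_\BH g$ is $\sum_{l\in I_j}\ee^{-\bar t\absval{l}^2}\hat g(l)e_l$. On $I_j$ the frequencies have size $\sim2^j$, so the restricted symbol is dominated by $\ee^{-c\bar t2^{2j}}$; expressed in rescaled variables its scaled derivatives obey the same Gaussian bound, whence a Mikhlin--H\"ormander multiplier argument on $\Td$ gives
\[
\altnorm*{\sum\nolimits_{l\in I_j}\ee^{-\bar t\absval{l}^2}\hat g(l)e_l}{L^p}\leq C\ee^{-c\bar t2^{2j}}\altnorm*{g_j}{L^p}
\]
uniformly in $j$ for $1<p<\infty$. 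Raising to the $q$-th power, multiplying by $2^{jdq}$ and summing, the factor $\sup_j2^{jdq}\ee^{-cq\bar t2^{2j}}<\infty$ (the Gaussian beats the polynomial -- precisely the ``coarse'' estimate that suffices for an exponentially ill-posed problem) yields the smoothing bound. Combining the three estimates gives the displayed inequality, and the lemma follows from Remark \ref{rem:interpolationBregmanBound}. I expect the principal technical obstacle to be the uniform $L^p$-boundedness of the blockwise multiplier, but the Gaussian decay of the symbol makes the required derivative bounds automatic, so no sharp constant tracking is needed.
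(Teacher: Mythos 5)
Your proof is correct, and it shares its skeleton with the paper's: the same K-interpolation splitting $\altnorm*{T_\BH g}{B^{d/2}_{p,1}}\leq c\altnorm*{T_\BH g}{B^0_{p,2}}^{1/2}\altnorm*{T_\BH g}{B^d_{p,q}}^{1/2}$, the same bound of the first factor by $\altnorm*{T_\BH g}{L^2}^{1/2}$ via $p\leq 2$, and the same reduction to Assumption \ref{ass:interpolation} through \eqref{eq:interpolation_norms} and Assumption \ref{ass:bregmanNormBound}. Where you genuinely diverge is in the smoothing bound $\altnorm*{T_\BH g}{B^{d}_{p,q}}\leq C\altnorm*{g}{B^0_{p,q}}$. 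The paper proves this elementarily: since $p\leq 2$, the $L^p$ norm of each Fourier block is dominated by the $\ell^2$ sum of its coefficients (H\"older plus Parseval), the Gaussian symbol is pulled out as $\exp(-\bar t q 2^{2j}/(16\pi^2))$, and this exponential is then used to absorb both the weight $2^{jdq}$ and the index shift $2^{-jd(\frac12-\frac1p)}$ needed for the embedding $B^{0}_{p,q}\subset B^{\frac d2-\frac dp}_{2,q}$ from \eqref{eq:besovEmbedSmooth4Int}. You instead stay in $L^p$ throughout and bound the blockwise multiplier by a Mikhlin--H\"ormander argument, letting the Gaussian absorb only $2^{jdq}$ together with the polynomial losses coming from the derivative bounds of the symbol. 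Both routes work; yours has the advantage of not using $p\leq 2$ at all in this step, so your smoothing bound is valid for every $p\in(1,\infty)$, but it invokes a heavier tool: you need the periodic (torus) version of the multiplier theorem, applied to a smooth annular extension of the symbol, since the blocks $I_j$ in \eqref{eq:fourierBesovNorm} are cut out by sharp characteristic functions --- harmless here because $g_j$ already has Fourier support in $I_j$, but worth stating explicitly. The paper's argument stays entirely within H\"older, Parseval, and the embedding \eqref{eq:besovEmbedSmooth4Int}, at the price of using $p\leq 2$ twice, which costs nothing since \eqref{eq:RY} assumes it anyway.
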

\begin{proof}
	By K-interpolation there exists a constant $c>0$ such that
	\begin{align*}
		\altnorm*{T_\BH(f_1-f_2)}{B^{d/2}_{p,1}} &\leq c \altnorm*{T_\BH(f_1-f_2)}{B^{0}_{p,2}}^\frac{1}{2} \altnorm*{T_\BH(f_1-f_2)}{B^{d}_{p,q}}^\frac{1}{2}.
	\end{align*}
As $p\leq 2$, the first factor can be bounded by $\altnorm*{T_\BH(f_1-f_2)}{L^2}^\frac{1}{2}$.
	To control the second factor we again use $p\leq 2$ to obtain
	\begin{align*}
		\altnorm*{T_\BH(f_1-f_2)}{B^{d}_{p,q}}^q
		&\leq\sum\nolimits_{j\in \Nset_0} 2^{jdq} \sbra*{\sum\nolimits_{l\in I_j} \exp(-2 \bar t \absval{l}^2) \absval*{\widehat{(f_1-f_2)}(l)}^2}^\frac{q}{2}\\
		&\leq \sum\nolimits_{j\in \Nset_0} 2^{jdq} \exp \rbra*{- \frac{\bar t q}{16 \pi^2} 2^{2j}} \sbra*{\sum\nolimits_{l\in I_j}  \absval*{\widehat{(f_1-f_2)}(l)}^2}^\frac{q}{2}.
	\end{align*}
	As there exists a constant $c>0$ such that
	\begin{equation*}
		2^{jdq} 2^{-jd(\frac{1}{2}-\frac{1}{p})} \exp \rbra*{-  \tfrac{\bar t q}{16 \pi^2} 2^{2j}} \leq c \qquad \text{for all } j\in \Nset
	\end{equation*}
	one obtains that
	\begin{equation*}
		\altnorm*{T_\BH(f_1-f_2)}{B^{d}_{p,q}} \leq c \altnorm[\big]{f_1-f_2}{B^{\frac{d}{2}-\frac{d}{p}}_{2,q}}\leq c \altnorm*{f_1-f_2}{B^{0}_{p,q}}
		\le c\rbra*{C_\Delta^{-1}\Delta_\Rpen(f_1,f_2)}^\frac{1}{r}
	\end{equation*}
	by the embedding properties of Besov spaces \eqref{eq:besovEmbedSmooth4Int} and Assumption \ref{ass:bregmanNormBound}. This completes the proof.
\end{proof}

\begin{theorem}
Assume that \eqref{eq:rand_noise} holds true with $F=T_\BH$ and $\rand$ 
satisfying Assumption \ref{ass:dev_ineq}, and consider the setting  
\eqref{eq:RY} with $p\in(1,2]$ and $q\in [2,\infty)$. 
Moreover, suppose that $\ftrue\in B^s_{p,\infty}$ for some $s>0$ 
with $\altnorm{\ftrue}{B^s_{p,\infty}}\leq \varrho$. 
Then $\fad$ in \eqref{eq:tikhonov} is well-defined almost surely,  
and for $\alpha=\varepsilon/4$ it satisfies the following error bounds
\begin{align*}
	\forall t>0\colon\quad 
	\prob\left[ \altnorm*{\fad-\ftrue}{\Besovpq[0]}\ge c \rbra*{ \varrho \rbra*{\ln \rbra*{\frac{\varrho}{\varepsilon}}}^{-s/2}+t}\right] \le \exp\rbra*{-C_\rand \varepsilon^{-\frac{\mu}{4}} t^{\frac{3q-2}{4}\mu}}
\end{align*}
with a constant $c>0$ independent of $\ftrue$, $\varrho$, and $\varepsilon$. 
\end{theorem}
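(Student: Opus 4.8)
The plan is to mirror the proof of Theorem~\ref{thm:prob_finiteRate}, feeding the deterministic variational source condition of Theorem~\ref{thm:vscBH} and the interpolation inequality of Lemma~\ref{lem:interpolationBH} into the abstract statistical rate of Theorem~\ref{thm:whiteNoiseRate}. First I would settle existence: since $N:=\altnorm{\gobs}{B^{-d/2}_{p^\prime,\infty}}<\infty$ almost surely by Assumption~\ref{ass:dev_ineq}, the mixed term $\pairing{\gobs}{T_\BH f}$ can be controlled by $N\altnorm{T_\BH f}{B^{d/2}_{p,1}}$ and then, via the interpolation bound from Lemma~\ref{lem:interpolationBH} together with $T_\BH 0 = 0$ and Young's inequality, absorbed into $\tfrac12\altnorm{T_\BH f}{L^2}^2 + \alpha\altnorm{f}{\Besovpq[0]}^r$; as $r=q\geq 2$ this yields coercivity of the Tikhonov functional, and existence then follows exactly as in Proposition~\ref{prop:existence}.

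For the source condition, note that for $q\geq 2$ we have $r=q$ and $(q-1)r^\prime=q$, so the index function of Theorem~\ref{thm:vscBH} has the leading behavior $\psi(\tau)\sim c\varrho^{q}\rbra*{\ln(\varrho/\sqrt\tau)}^{-sq/2}$ as $\tau\to 0$. Since $\psi$ is of logarithmic type, the associated bias function $\varphi_\psi$ is of the same type by the computation in Example~\ref{ex:conjugate}, eq.~\eqref{eq:conjugate:logarithmic}; concretely, after taking the $q$-th root, the term $\varphi_\psi(4\alpha)$ reproduces the deterministic logarithmic rate $\varrho\rbra*{\ln(\varrho/\varepsilon)}^{-s/2}$ of \eqref{eq:heat_rate}.

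Next I would invoke Lemma~\ref{lem:interpolationBH}, which verifies Assumption~\ref{ass:interpolation} with $\beta=\tfrac12$ and $\gamma=\tfrac{1}{2r}=\tfrac{1}{2q}$. These satisfy $0<\beta<2$ and $0<\gamma<\tfrac12(2-\beta)=\tfrac34$ (the latter because $\gamma=\tfrac{1}{2q}\leq\tfrac14$ for $q\geq2$), so Theorem~\ref{thm:whiteNoiseRate}\eqref{thm:whiteNoiseRate:rate} applies. With these values $2-\beta-2\gamma=(3q-2)/(2q)$, hence the exponents become $\tfrac{2-\beta}{2-\beta-2\gamma}=\tfrac{3q}{3q-2}$ and $\tfrac{2}{2-\beta-2\gamma}=\tfrac{4q}{3q-2}$, giving
\begin{equation*}
	\tfrac12\Delta_\Rpen(\fad,\ftrue)\leq C\alpha^{-\frac{3q}{3q-2}}\altnorm*{\varepsilon\rand}{B^{-d/2}_{p^\prime,\infty}}^{\frac{4q}{3q-2}}+4\varphi_\psi(4\alpha).
\end{equation*}
Choosing $\alpha=\varepsilon/4$ and using $\altnorm{\varepsilon\rand}{B^{-d/2}_{p^\prime,\infty}}=\varepsilon\altnorm{\rand}{B^{-d/2}_{p^\prime,\infty}}$ collapses the first term to $C\varepsilon^{q/(3q-2)}\altnorm{\rand}{B^{-d/2}_{p^\prime,\infty}}^{4q/(3q-2)}$. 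Converting the Bregman distance into a norm via Example~\ref{ex:bregmanNormBound}\eqref{ex:bregmanNormBound:convex} with $r=q$ and taking the $q$-th root (using subadditivity of $x\mapsto x^{1/q}$) then yields
\begin{equation*}
	\altnorm*{\fad-\ftrue}{\Besovpq[0]}\leq c\varrho\rbra*{\ln\rbra*{\varrho/\varepsilon}}^{-s/2}+C\varepsilon^{\frac{1}{3q-2}}\altnorm*{\rand}{B^{-d/2}_{p^\prime,\infty}}^{\frac{4}{3q-2}}.
\end{equation*}

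Finally, to pass to the probabilistic tail I would exploit that for $q\geq2$ the exponent $4/(3q-2)\leq1$, so $x\mapsto x^{4/(3q-2)}$ is subadditive; splitting $\altnorm{\rand}{B^{-d/2}_{p^\prime,\infty}}\leq M_\rand+(\altnorm{\rand}{B^{-d/2}_{p^\prime,\infty}}-M_\rand)_+$, the median contribution produces a term of size $\varepsilon^{1/(3q-2)}M_\rand^{4/(3q-2)}$ that is polynomially small in $\varepsilon$ and hence absorbed into the logarithmic main rate after enlarging $c$. On the event $\{\altnorm{\fad-\ftrue}{\Besovpq[0]}\geq c(\varrho(\ln(\varrho/\varepsilon))^{-s/2}+t)\}$ the remaining deviation term must dominate $t$, which forces $\altnorm{\rand}{B^{-d/2}_{p^\prime,\infty}}\geq M_\rand+c^\prime t^{(3q-2)/4}\varepsilon^{-1/4}$; Assumption~\ref{ass:dev_ineq} then bounds this probability by $\exp(-C_\rand (c^\prime)^\mu\varepsilon^{-\mu/4}t^{(3q-2)\mu/4})$, which after rescaling constants is the claimed bound for all $t>0$. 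The main obstacle I expect is the logarithmic case: carefully justifying that $\varphi_\psi$ inherits both the $\varrho^q$ prefactor and the $(-\ln)^{-sq/2}$ decay of $\psi$ through the Fenchel conjugate, and organizing the median offset $M_\rand$ so that the separation into ``main rate plus $t$'' stays uniform in $t$.
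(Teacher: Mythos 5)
Your proposal is correct and follows essentially the same route as the paper: Theorem \ref{thm:vscBH} for the logarithmic source condition, Lemma \ref{lem:interpolationBH} for Assumption \ref{ass:interpolation} with $\beta=\tfrac12$, $\gamma=\tfrac1{2q}$, then Theorem \ref{thm:whiteNoiseRate}\eqref{thm:whiteNoiseRate:rate} with $\alpha=\varepsilon/4$, the Bregman-to-norm conversion \eqref{eq:bregmanNormBound:convex}, and finally Assumption \ref{ass:dev_ineq} with the substitution $t=\varepsilon^{1/(3q-2)}t'^{4/(3q-2)}$. The only (harmless) differences are that the paper obtains existence by directly citing Proposition \ref{prop:existence} (which applies since $T_\BH$ is infinitely smoothing) rather than re-deriving coercivity, and that you spell out the absorption of the median term $M_\rand$, which the paper leaves implicit.
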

\begin{proof}
Existence of $\fad$ follows from Proposition \ref{prop:existence}.  
	By Theorem \ref{thm:vscBH} a logarithmic variational source condition of the form of Example \ref{ex:conjugate} holds true. 
	Hence from \eqref{eq:conjugate:logarithmic} one obtain that
	\begin{equation*}
		\varphi_\psi (\tau)= C \varrho^r \rbra*{\ln \rbra*{\frac{\varrho}{\tau}}}^{-\frac{s(q-1) r^\prime}{2}} (1+ o(1))\qquad \text{as } t\rightarrow 0.
	\end{equation*}
	Thus Theorem \ref{thm:whiteNoiseRate}\eqref{thm:whiteNoiseRate:rate} together with Lemma \ref{lem:interpolationBH} gives
	\begin{align*}
		\Delta_\mathcal{R}(\fad,\ftrue) \le C \alpha^{-\frac{3r}{3r-2}} \altnorm*{\varepsilon\rand}{B^{-d/2}_{p^\prime,\infty}}^{\frac{4r}{3r-2}}+C\varphi_\psi(4\alpha).
	\end{align*}
	Choosing $\alpha = \frac{\varepsilon}{4}$ and using \eqref{eq:bregmanNormBound:convex} we get
	\begin{align*}
		\altnorm*{\fad-\ftrue}{\Besovpq[0]}\le C \varepsilon^{\frac{1}{3r-2}}\altnorm*{\rand}{B^{-d/2}_{p^\prime,\infty}}^{\frac{4}{3r-2}} +C\varphi_\psi(\varepsilon)^\frac{1}{r}.
	\end{align*}
	Finally by the deviation inequality given by Assumption \ref{ass:dev_ineq} we find that
	\begin{align*}
		\prob\left[ \altnorm*{\fad-\ftrue}{\Besovpq[0]}\ge C \rbra*{ \varrho \rbra*{\ln \rbra*{\frac{\varrho}{\varepsilon}}}^{-\frac{s(q-1)r'}{2r}}+\varepsilon^{\frac{1}{3r-2}}t'^{\frac{4}{3r-2}}}\right] \le \exp\rbra*{-C_\rand t'^\mu}
	\end{align*}
for all $t'>0$. Substituting $t=\varepsilon^\frac{1}{3r-2}t'^{\frac{4}{3r-2}}$ shows the claim.
\end{proof}
For the optimality of this risk bound in the case $p=q=2$ we refer to \cite{GK:99}.

\section{Discussion}
We have shown optimal rates of convergence for finitely smoothing operators 
for Tikhonov regularization with Besov norm penalties $B^0_{p,q}$ with 
$p\in (1,2]$ and $q\geq 2$ if the true solution belongs to $B^s_{p,\infty}$. 
By Proposition \ref{prop:besovLowerBasic} these rates cannot be improved in 
the deterministic case if we 
restrict to smaller spaces $B^s_{\tilde{p},\tilde{q}}$ with $\tilde{p}\geq p$ and 
$\tilde{q}\in [1,\infty]$. For $p=q=2$ we have shown in \cite{HW:17} that 
$B^s_{2,\infty}$ is the largest space in which Tikhonov regularization achieves the 
rate $O(\delta^{s/(s+a})$, and we conjecture that this is also the case for 
$p<2$. Note that the smaller $p$ is chosen, the larger becomes the smoothness class 
on which optimal rates are achieved. 

For $q<2$ our approach in its present form yields error bounds which are 
most likely suboptimal. This case requires further investigations. 
It would be desirable to get rid of the logarithmic factor in 
Corollary \ref{coro:log} and the following remark for $L^p$ loss functions. 
Moreover, the case $p=q$ is more commonly used and a bit more convenient 
algorithmically. 


\appendix

\section{Properties of Besov spaces}\label{sec:besovSpaces} 
In this appendix we collect some properties of the Besov scale $\Besovpq$.

Besov spaces are a quite general class of spaces, in order to provide some intuition on these spaces we will list some properties and special cases here which can e.g.\ be found in \cite{Triebel2010}. First of all for $1<p,q<\infty$ and $s\in \Rset$ the dual space is given via $(B^s_{p,q})^\prime = B^{-s}_{p^\prime, q^\prime}$. The spaces form scales with respect to the smoothness and summation index, for any $\varepsilon>0$, $s\in \Rset$, $p\in[1,\infty]$ and $1\leq q_1 \leq q_2 \leq \infty$ the embeddings
\begin{equation}\label{eq:besovEmbedFine}
	B^{s+\varepsilon}_{p,\infty} \subset B^{s}_{p,1} \subset B^{s}_{p,q_1}\subset B^{s}_{p,q_2} \subset B^{s}_{p,\infty} \subset B^{s-\varepsilon}_{p,1}
\end{equation}
are continuous. Furthermore one can give up smoothness to gain integrability, to be more precise for $p_1\leq p_2$ and $s_1\geq s_2$ the embedding 
\begin{equation}\label{eq:besovEmbedSmooth4Int}
	B^{s_1}_{p_1,q}\subset B^{s_2}_{p_2,q}
\end{equation}
is continuous if $s_1- \frac{d}{p_1} \geq s_2 - \frac{d}{p_2}$ and compact if $s_1>s_2$ (see \cite[\S 4.3.3 Rem.~1]{Triebel2010}).

The classical Lebesgue spaces $L^p$ for $p\neq 2$ are not Besov spaces, but for $1<p< \infty$ the following inclusions hold true 
with continuous embeddings:
\begin{equation}\label{eq:lesbegueBesov}
	B^0_{p,\min\cbra{2,p}}\subset L^p \subset B^0_{p,\max\cbra{2,p}}
\end{equation}
However, if $s$ is not an integer, then $B^s_{p,p}=W^{s,p}$ the Sobolev spaces with the norm given by (assume for simplicity that $0<s<1$):
\begin{equation*}
	\altnorm*{f}{W^{s,p}}=\altnorm*{f}{L^p}+\rbra*{\int \int \frac{\absval*{f(x)-f(y)}^p}{\absval*{x-y}^{d+sp}} \,\dd y\, \dd x}^{\frac{1}{p}}.
\end{equation*}

An important class of solutions to inverse problems are functions $f$ which are smooth up to jumps (or jumps 
in the  $k$th-derivative). It is well known that such functions are in $W^{d/p-\varepsilon,p}$ (or in 
$W^{k+d/p-\varepsilon,p}$ respectively) for all $\varepsilon>0$, however in this 
scale of spaces such functions do not have a maximal smoothness index. Using the Nikol'skij representation 
of the Besov norm $B^s_{p,\infty}$ (see \cite[Sec.~2.5.12]{Triebel2010})
one can easily calculate that such functions belong to $B^{s}_{p,\infty}$ (or to 
$B^{k+s}_{p,\infty}$ respectively) for $s=d/p$.

For $p,q\in(1,\infty)$ Assumption \ref{ass:bregmanNormBound} is fulfilled via Example \ref{ex:bregmanNormBound}\eqref{ex:bregmanNormBound:convex}, as Besov spaces $\Besovpq$ equipped with the wavelet norm are $\max\cbra{2,p,q}$-convex, see \cite{Kazimierski2013}.

\section{Details on numerical simulation}\label{sec:detailsSimulation}
The true solution is given by the linear spline interpolating the points
\begin{align*}
	\textstyle\cbra*{\rbra*{0,0}, \rbra*{\frac{1}{8},0}, \rbra*{\frac{3}{16},\frac{1}{16}},\rbra*{\frac{1}{4},0},\rbra*{\frac{5}{16},0},\rbra*{\frac{7}{16},\frac{1}{8}},\rbra*{\frac{1}{2},\frac{1}{16}},\rbra*{\frac{5}{8},\frac{3}{16}}, \rbra*{\frac{11}{16},\frac{1}{16}},\rbra*{\frac{3}{4},\frac{1}{8}},\rbra*{\frac{7}{8},0} }
\end{align*}
with periodic boundary condition on $\mathds T^1$.

In order to find a noise vector $\xi$ in \eqref{eq:det_noise} which (at least approximately) maximizes the left hand side of \eqref{eq:det_rate_finite_smoothing} we proceeded as follows: Let $F$ be a compact operator with singular system $(f_j,g_j,\sigma_j)_{j\in \Nset}$ and denote by $f_\alpha$ the minimizer of the Tikhonov functional \eqref{eq:tikhonov_det} for noise free data $\gobs=F(\ftrue)$. One can expect that there exists $c_1>0$ such that
\begin{equation*}
	\sup\nolimits_\xi\altnorm*{\ftrue- \fad}{L^2} \geq c_1 \altnorm*{f_\alpha- \fad}{L^2}.
\end{equation*}
By first order optimality conditions $ \fad-f_\alpha=(F^*F+\alpha I)^{-1}F^* \xi$, therefore the right hand side is of the form
\begin{equation*}
	\altnorm*{f_\alpha- \fad}{L^2}^2=\altnorm*{(F^*F+\alpha I)^{-1}F^* \xi}{L^2}^2=\sum_{j\in \Nset} \absval*{\frac{\sigma_j\,\pairing{\xi}{g_j}}{\sigma_j^2+\alpha} }^2.
\end{equation*}
Since the function $\lambda/(\lambda^2+\alpha)$ is maximized by $\lambda=\sqrt{\alpha}$ the right hand side will be close to its maximum if for $0<c_2<c_3$ we choose $\xi=\sum_{j\in J} \delta_j g_j$ with $\sum_{j\in J}\delta_j^2\leq \delta^2$ where $J$ is such that $c_2 \sqrt\alpha \leq \sigma_j\leq c_3 \sqrt{\alpha}$ for all $j\in J$. 
This leads to the lower bound
\begin{equation*}
	\altnorm*{\ftrue- \fad}{L^2}^2\geq c_1^2 \frac{c_2^2}{1+c_3^2} \frac{\delta^2}{\alpha}
\end{equation*}
on the reconstruction error. Recall that $\frac{\delta^2}{\alpha}$ also appears in the upper bound on the reconstruction error in Proposition \ref{prop:vscRate}, hence for any parameter choice of $\alpha$ for which the upper bound is of optimal order the lower bound will be of the same order, i.e.\ up to constants we will observe the worst case error rate. 
Although for $p\neq 2$ the estimator $\fad$ depends in a non-affine way on $\xi$ we will assume that the same choice of the noise vector will lead to the worst case error.

The operator of the tested example $F=(I-\frac{1}{4\pi^2}\partial_x^2)^{-1}$ on $\mathds T^1$ is compact with singular system
\begin{equation*}
	f_j(x)=g_j(x)=\begin{cases}  \exp\rbra*{2 \pi \ii \frac{j-1}{2} x} & j \text{ odd}\\\exp\rbra*{-2 \pi \ii \frac{j}{2} x} & j \text{ even} \end{cases} \quad \text{and} \quad \sigma_j=\begin{cases} \rbra*{1+\rbra*{\frac{j-1}{2}}^2}^{-1} & j \text{ odd}\\\rbra*{1+\rbra*{\frac{j}{2}}^2}^{-1}  & j \text{ even} \end{cases}
\end{equation*}
with $j\in \Nset$. Therefore we use the approximation $\sigma_j\approx \rbra{\frac{j}{2}}^{-2}$ and apply the above error model with $c_1=1/2$ and $c_2=2$. The values for $\delta_j$ are drawn from a normal distribution and normalized in order to fulfill $\sum_{j\in J}\delta_j^2=\delta^2$.

\bibliographystyle{siamplain}
\bibliography{litVscOptimalBesov}

\end{document}